\documentclass[a4paper]{amsart} %

\usepackage[T1]{fontenc} 
\usepackage[utf8]{inputenc}
\usepackage[british]{babel}
\usepackage{newpxtext} %
\usepackage{eulervm} %

\usepackage{amssymb,textcomp,mathrsfs,stmaryrd,braket,commath,relsize}
\usepackage{bm} %
	\SetSymbolFont{stmry}{bold}{U}{stmry}{m}{n} %
\usepackage[new]{old-arrows} %

\usepackage{mathtools}
    \mathtoolsset{showonlyrefs} %

\theoremstyle{plain} %
    \newtheorem{theorem}{Theorem}[section]
    \newtheorem*{theorem*}{Theorem}
    \newtheorem{proposition}{Proposition}[section]
    \newtheorem*{proposition*}{Proposition}
    \newtheorem{corollary}{Corollary}[section]
    \newtheorem*{corollary*}{Corollary}
    \newtheorem{lemma}{Lemma}[section]
    \newtheorem*{lemma*}{Lemma}
    
    \newtheorem*{conjecture*}{Conjecture}
    
\theoremstyle{definition} %
    \newtheorem{definition}{Definition}[section]
    \newtheorem*{definition*}{Definition}

\theoremstyle{remark} %
    \newtheorem{remark}{Remark}[section]
    \newtheorem*{remark*}{Remark}
	\newtheorem{example}{Example}[section]
	\newtheorem*{example*}{Example}

\usepackage{etoolbox} 
    \newcommand{\addQEDstyle}[2]{\AtBeginEnvironment{#1}{\pushQED{\qed}\renewcommand{\qedsymbol}{#2}}
    \AtEndEnvironment{#1}{\popQED}} %
    \addQEDstyle{remark}{$\triangle$}
    \addQEDstyle{remark*}{$\triangle$} %
	\addQEDstyle{example}{$\triangle$}
    \addQEDstyle{example*}{$\triangle$} %
	
\usepackage{tikz}
	\usetikzlibrary{cd} %
	\usetikzlibrary{calc}
	\tikzstyle{vertex}=[circle,draw,inner sep=0pt,minimum size=12pt]

\usepackage{longtable}

\interfootnotelinepenalty=10000 %
\AtBeginDocument{\def\MR#1{}} %
\apptocmd{\sloppy}{\hbadness 10000\relax}{}{} %

\makeatletter %
	\@namedef{subjclassname@2020}{
		\textup{2020} Mathematics Subject Classification}
\makeatother

\usepackage{mymacros}

\usepackage[colorlinks,psdextra,pdfencoding=auto,backref=page]{hyperref}
    \hypersetup{colorlinks,linkcolor={red!50!black},filecolor={green!50!black},urlcolor={blue!80!black},citecolor={blue!50!black}}

\begin{document}

\title[Topology of irregular isomonodromy times]{Topology of irregular isomonodromy times on a fixed pointed curve} %

\author[J.~Douçot]{Jean Douçot} %
\thanks{During this project, J.~D. was funded by FCiências.ID.}

\author[G.~Rembado]{Gabriele Rembado}
\thanks{During this project, G.~R. was supported by the Deutsche Forschungsgemeinschaft (DFG, German Research Foundation) under Germany’s Excellence Strategy - GZ 2047/1, Projekt-ID 390685813.}

\address[J.~Douçot]{Section de mathématiques, Université de Genève, Rue du Conseil-Général 7-9, 1205 Genève (Switzerland)}
\curraddr{`Simion Stoilow' Institute of Mathematics of the Romanian Academy,
	Calea Griviței 21,
	010702-Bucharest, 
	Sector 1, 
	Romania}
\email{jeandoucot@gmail.com}

\address[G.~Rembado]{Hausdorff Centre for Mathematics, University of Bonn, 60 Endenicher Allee, D-53115 Bonn (Germany)}
\curraddr{Institut Montpelliérain Alexander Grothendieck,
	University of Montpellier, 
	Place Eugène Bataillon,
	34090 Montpellier,
France}
\email{gabriele.rembado@umontpellier.fr}

\date{\today}

\subjclass[2020]{20F55, 20F36, 55R10}

\keywords{Generalised braid groups, action operads, isomonodromic deformations, Weyl groups}

\begin{abstract}
	We will define and study (moduli) spaces of deformations of irregular classes on Riemann surfaces, which provide an intrinsic viewpoint on the `times' of irregular isomonodromy systems in general.
	Our aim is to study the deeper generalisation of the $G$-braid groups that occur as fundamental groups of such deformation spaces, with particular focus on the generalisation of the full $G$-braid groups.
\end{abstract}

{\let\newpage\relax\maketitle} %

\setcounter{tocdepth}{1}  %
\tableofcontents

\section*{Introduction}

\renewcommand{\thetheorem}{\arabic{theorem}} %

Classically, the theory of isomonodromy constitutes a collection of nonlinear integrable differential equations, whose unknown is a (linear) meromorphic connection on a vector bundle over the Riemann sphere.
Geometrically, these are flat Ehresmann connections on a bundle whose fibres are moduli spaces of such meromorphic connections.

The underlying deformation parameters, the `times', have recently been given an intrinsic formulation, leading to a generalisation of the moduli of pointed curves (in any genus).
This framework is especially useful when considering the generalised deformations, beyond the generic case: recall~\cite{jimbo_miwa_ueno_1981_monodromy_preserving_deformation_of_linear_ordinary_differential_equations_with_rational_coefficients_i_general_theory_and_tau_function} set up a theory of `generic' isomonodromic deformations of meromorphic connections on vector bundles over a Riemann surface $\Sigma$, where the leading coefficient at each pole has distinct eigenvalues (building on~\cite{birkhoff_1913_the_generalized_riemann_problem_for_linear_differential_equations_and_allied_problems_for_linear_difference_and_q_difference_equations}; cf.~\cite{malgrange_1983_sur_les_deformations_isomonodromiques_II_singularites_irregulieres,balser_jurkat_lutz_1979_birkhoff_invariants_and_stokes_multipliers_for_meromorphic_linear_differential_equations}).
This has been extended in two directions: i) replacing vector bundles by principal $G$-bundles, leading in particular to the appearance of $G$-braid groups for complex reductive groups $G$~\cite{boalch_2002_g_bundles_isomonodromy_and_quantum_weyl_groups}, and ii) considering \emph{nongeneric} admissible deformations, e.g.~\cite{boalch_2012_simply_laced_isomonodromy_systems,boalch_2014_geometry_and_braiding_of_stokes_data_fission_and_wild_character_varieties}, where the (untwisted/unramified) irregular type of the connection is \emph{arbitrary}, leading to cabled braid groups~\cite{doucot_rembado_tamiozzo_2022_local_wild_mapping_class_groups_and_cabled_braids}.

In particular the spaces of generalised monodromy data, the wild character varieties (a.k.a. wild Betti spaces), have been proved to form a local system of Poisson varieties~\cite{boalch_2014_geometry_and_braiding_of_stokes_data_fission_and_wild_character_varieties}\fn{
	Basically speaking, a bundle of Poisson manifolds equipped with a complete flat connection: the (Betti) isomonodromy connection, a.k.a. the wild nonabelian Gauß--Manin connection.}
\begin{equation}
	\label{eq:betti_bundle}
	\ul{\mc M}_{\on B} \lra \bm B \, ,
\end{equation}
over any space $\bm B$ of admissible deformations.
These give a purely topological description of the nonlinear isomonodromy differential equations, via the Riemann--Hilbert--Birkhoff correspondence.

\vspace{5pt}

Our purpose in this paper is to study the fundamental groups of the base spaces $\bm B$ of such admissible deformations, the groups that will act by algebraic Poisson automorphisms on the wild character varieties (the fibres of~\eqref{eq:betti_bundle}) from the parallel transport of the isomonodromy connection---i.e. the monodromy of the nonlinear differential equations.
This builds on our previous paper~\cite{doucot_rembado_tamiozzo_2022_local_wild_mapping_class_groups_and_cabled_braids}, which used a fixed marking: here we will quotient by the Weyl group action and get to the full version of `wild' mapping class groups, in analogy to forgetting the ordering of marked points on the underlying pointed curve.

\vspace{5pt}

This encompasses the much-studied case of regular singular connections, involving the complex character varieties, which is the entry point for the standard mapping-class- and braid-group-actions in classical/quantum 2$d$ gauge theories---via deformations of pointed curves, e.g.~\cite{kohno_1987_monodromy_representation_of_braid_groups_and_yang_baxter_equations,drinfeld_1989_quasi_hopf_algebras_and_knizhnik_zamolodchikov_equations,masbaum_2003_quantum_representations_of_mapping_class_groups,andersen_2006_asymptotic_faithfulness_of_the_quantum_su_n_representations_of_the_mapping_class_group} in the quantum case.
The case of poles of order 2, however, has been extensively studied by various authors: there are relations to quantum groups  \cite{boalch_2002_g_bundles_isomonodromy_and_quantum_weyl_groups,xu_2020_stokes_phenomenon_and_yang_baxter_equations,xu_2020_representations_of_quantum_groups_arising_from_the_stokes_phenomenon_and_applications}, and already there the boundary of the space of times has a rich structure (corresponding to the `coalescence' of irregular times, cf.~\cite{cotti_dubrovin_guzzetti_2019_isomonodromy_deformations_at_an_irregular_singularity_with_coalescing_eigenvalues,xu_2019_closure_of_stokes_matrices_I_caterpillar_points_and_applications,xu_2022_on_the_connection_formula_for_a_higher_rank_analog_of_painleve_vi}).
In particular the simplest irregular singular case has been understood in rigorous analytic way, while this paper focuses on the algebro-geometric aspects of the general nongeneric case.

\vspace{5pt}

In this series of `local' papers we fix the underlying pointed curve, and vary the rest of the wild Riemann surface structure~\cite{boalch_2014_geometry_and_braiding_of_stokes_data_fission_and_wild_character_varieties}, i.e. the irregular types/classes, controlling principal parts of irregular singular connections beyond their (formal) residues.
More precisely~\cite{doucot_rembado_tamiozzo_2022_local_wild_mapping_class_groups_and_cabled_braids} studies the (fine) moduli space of untwisted (a.k.a. unramified) irregular types for the split Lie algebra $(\mf g,\mf t) \ceqq \bigl( \Lie(G),\Lie(T) \bigr)$, where $T \sse G$ is a maximal torus, while here we consider irregular classes.

Recall in brief that an untwisted irregular type $Q$ at a point $a \in \Sigma$ is the germ of a $\mf t$-valued meromorphic function based there, defined up to holomorphic terms:
\begin{equation}
	\label{eq:irregular_type_intro}
	Q = \sum_{j = 1}^p A_j z^{-j} \in \mf t (\!(z)\!) \bs \mf t \llb z \rrb \, , \qquad A_j \in \mf t \, ,
\end{equation}
in a local coordinate $z$ vanishing at the marked point.
Then the Weyl group $W_{\mf g} = N_G(T) \bs T$ acts on the left tensor factor of
\begin{equation*}
	\mf t \ots_{\mb C} \bigl( \mb C (\!(z)\!) \bs \mb C \llb z \rrb \bigr) \simeq \mf t (\!(z)\!) \bs \mf t \llb z \rrb \, ,
\end{equation*}
and the irregular class underlying~\eqref{eq:irregular_type_intro} is its projection $\ol Q$ in the quotient, i.e. the Weyl-orbit through $Q$~\cite[Rk.~10.6]{boalch_2014_geometry_and_braiding_of_stokes_data_fission_and_wild_character_varieties}.\fn{
	E.g. if $G = \GL_n(\mb C)$ we thus consider the coefficients $A_j$ in~\eqref{eq:irregular_type} up to simultaneous permutations of their eigenvalues.}

The important fact is the fibres of~\eqref{eq:betti_bundle} only depend on the collection of irregular classes underlying the irregular types at each marked point, and thus any (admissible) space of irregular classes provides an intrinsic topological description of the corresponding isomonodromy times.
In the generic case, where the leading coefficient of~\eqref{eq:irregular_type_intro} is out of all root hyperplanes, the homotopy type of the deformation space brings about the $G$-braid group: in this paper we shall encounter a generalisation in the nongeneric case, which we relate to braid cabling in type $A$.

\subsection*{Layout of the paper and main results}

In \S~\ref{sec:local_wmcg} we give the main definition: to a one-pointed (bare) wild Riemann surface $\bm{\Sigma} = (\Sigma,a,\ol Q)$ we associate a full/nonpure local `wild' mapping class group $\Gamma_{\ol Q}$ (WMCG), viz. the fundamental group of a space $\bm B_{\ol Q}$ of admissible deformations of the irregular class $\ol Q$ (cf. Def.~\ref{def:local_WMCG}).
The latter is a topological quotient of the (universal) admissible deformation space $\bm B_Q$ of $Q$, where $Q$ is any irregular type lifting $\ol Q$.

\vspace{5pt}

In \S~\ref{sec:weyl_fission} we describe the subgroup of $W_{\mf g}$ preserving $\bm B_Q \sse \mf t^p$, and further the quotient thereof that acts freely; the resulting subquotient is denoted $W_{\mf g \mid \bm{\mf h}}$.
The relevant statements are proven inductively along the sequence of fission/Levi (root) subsystems of $\Phi_{\mf g}$ associated with $Q$ (cf.~\cite{doucot_rembado_tamiozzo_2022_local_wild_mapping_class_groups_and_cabled_braids}): first in the case where $Q = Az^{-1}$ has a single coefficient $A \in \mf t$ (in \S~\ref{sec:weyl_fission_step}), and then in the general case (in \S~\ref{sec:weyl_fission_general}).

\begin{theorem*}[Cf. Thm.~\ref{thm:local_wmcg_extension}]

	The space $\bm B_Q$ is a Galois covering of $\bm B_{\ol Q}$ with $\Gal(\bm B_Q,\bm B_{\ol Q}) = W_{\mf g \mid \bm{\mf h}}$, so $\Gamma_{\ol Q}$ is an extension of $W_{\mf g \mid \bm{\mf h}}$ by the pure local WMCG.
\end{theorem*}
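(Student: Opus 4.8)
The plan is to reduce the statement to the standard covering-space dictionary for free actions of finite groups, taking as input the group-theoretic description from §\ref{sec:weyl_fission}. Write $W' \sse W_{\mf g}$ for the subgroup stabilising $\bm B$ setwise: by the results established there along the fission filtration of $\Phi_{\mf g}$, the quotient $W_{\mf g \mid \bm{\mf h}}$ is $W'$ modulo the normal subgroup $K \sse W'$ of elements acting trivially on $\bm B$, and $W_{\mf g \mid \bm{\mf h}} = W' \big\slash K$ acts freely on $\bm B$. The first task is to identify the orbit space: I claim the projection to irregular classes $\bm B \lra \ol{\bm B}$ realises $\ol{\bm B}$ as $\bm B \big\slash W_{\mf g \mid \bm{\mf h}}$. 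Since $K$ acts trivially, $\bm B \big\slash W_{\mf g \mid \bm{\mf h}} = \bm B \big\slash W'$, so it is enough to prove that two deformations $Q_1, Q_2 \in \bm B$ carry the same irregular class precisely when they lie in a common $W'$-orbit.

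The reverse implication is immediate, as $W'$-translates share the same class by construction. The forward implication is the crux: given $Q_2 = w \cdot Q_1$ for some $w \in W_{\mf g}$ with both types in $\bm B$, I must correct $w$ into $W'$ modulo $K$. Here I would replay the induction of §\ref{sec:weyl_fission_step}--§\ref{sec:weyl_fission_general}: starting from the leading semisimple coefficient of the irregular type, observe that $w$ carries the leading datum of $Q_1$ to that of $Q_2$, use the stabiliser description at that fission level to adjust $w$ by the corresponding Levi factor, and descend through the successive fission subsystems, at each stage replacing $w$ by an element agreeing with it up to $K$ and preserving the stratum. I expect this to be the main obstacle, since it is exactly where the combinatorics of admissibility enters; the remaining steps are formal.

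Granting the identification $\ol{\bm B} = \bm B \big\slash W_{\mf g \mid \bm{\mf h}}$, the covering assertion is soft. The group $W_{\mf g \mid \bm{\mf h}}$ is finite, being a subquotient of the finite Weyl group $W_{\mf g}$, and it acts freely on $\bm B$; as $\bm B$ is a connected complex manifold (in particular Hausdorff and locally path-connected), a free action of a finite group is automatically properly discontinuous. Hence $\bm B \lra \ol{\bm B}$ is a covering map, and a quotient by a group acting freely and properly discontinuously is a regular (Galois) covering whose deck group is the acting group, giving $\Gal(\bm B, \ol{\bm B}) = W_{\mf g \mid \bm{\mf h}}$.

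For the extension I would invoke the fundamental exact sequence of a regular covering of connected, locally path-connected spaces,
\begin{equation*}
    1 \lra \pi_1(\bm B) \lra \pi_1(\ol{\bm B}) \lra \Gal(\bm B, \ol{\bm B}) \lra 1 \, ,
\end{equation*}
where the first map is the injection induced by the covering and the second is the monodromy onto the deck group. By definition $\pi_1(\ol{\bm B})$ is the full/nonpure local WMCG and $\pi_1(\bm B)$ the pure one, while the rightmost term is $W_{\mf g \mid \bm{\mf h}}$ by the above. This exhibits the full local WMCG as an extension of $W_{\mf g \mid \bm{\mf h}}$ by the pure local WMCG, completing the proof.
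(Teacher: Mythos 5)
Your proposal is correct and follows essentially the same route as the paper: the identification $\ol{\bm B} \simeq \bm B \big\slash W_{\mf g \mid \bm{\mf h}}$ rests on the lemmata of \S~\ref{sec:weyl_fission_general} (in particular $(W_{\mf g}Q) \cap \bm B = \mc O_Q$ and $(W_{\mf g})_Q = W_{\mf h_1}$), after which freeness plus finiteness gives a properly discontinuous action, a Galois covering, and the exact sequence~\eqref{eq:exact_sequence_full_WMCG}, exactly as in the proof of Prop.~\ref{prop:nonpure_WMCG_factor}. The only cosmetic difference is at your ``crux'': no correction of $w$ modulo $K$ is needed, since the inductive argument shows directly that any $w \in W_{\mf g}$ with $Q, wQ \in \bm B$ already lies in $\Stab_{W_{\mf g}}(\bm U)$.
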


\vspace{5pt}

In \S~\ref{sec:nonpure_low_rank_examples} we describe all full/nonpure local WMCGs for the irreducible rank-2 root systems, after explaining the classification boils down to simple Lie algebras.

\vspace{5pt}

Finally in \S~\ref{sec:nonpure_type_A_example} we explicitly describe the full/nonpure local WMCGs when $\mf g \in \Set{\mf{gl}_n(\mb C), \mf{sl}_n(\mb C)}$, in the nonabelian case $n \geq 2$.
This means identifying the `effective' subquotient of the Weyl group that controls the Galois covering (a Coxeter-type group), and then compute the fundamental group of the base (an Artin-type group).
The inductive step $Q = Az^{-1}$ is in \S~\ref{sec:nonpure_type_A_example_step}, where we prove the following statement.

\begin{theorem*}[Cf. Prop.~\ref{prop:kernel_stabiliser_type_A}, Cor.~\ref{cor:reduced_weyl_group_type_A} and Prop.~\ref{prop:semipure_braid_group}]
	The Weyl-stabiliser of $\bm B_Q$ is a direct product of wreath products of symmetric groups, and the effective quotient $W_{\mf g \mid \bm{\mf h}}$ is a direct product of symmetric groups; then $\Gamma_{\ol Q}$ is the subgroup of braids whose underlying permutation lies in $W_{\mf g \mid \bm{\mf h}}$, and it is an extension of the latter by a pure braid group.
\end{theorem*}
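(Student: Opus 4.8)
The plan is to establish the three assertions in turn for the inductive step, in which a single semisimple element $A \in \mf t$ is adjoined on top of the finer structure. Writing $\mf g \in \Set{\mf{gl}_n(\mb C),\mf{sl}_n(\mb C)}$, so that $W_{\mf g} = S_n$ acts by permuting the diagonal entries, the element $A$ induces a set partition of $\Set{1,\dots,n}$ into eigenvalue-blocks $B_1,\dots,B_r$, and admissibility forces any deformation to preserve this partition: the entries inside each block stay equal while distinct blocks stay distinct. Hence I would first identify the homotopy type of $\bm B$ with the configuration space $\mathrm{Conf}_r(\mb C)$ of the $r$ distinct block-values, an aspherical space with $\pi_1(\bm B) = P_r$, the pure braid group on $r$ strands; the traceless constraint in the $\mf{sl}_n$-case only quotients by an overall translation and leaves the homotopy type unchanged, which is why it suffices to argue for a single isogeny class.

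For the first assertion I would compute the Weyl-stabiliser of $\bm B$ purely combinatorially. Since $\bm B$ is exactly the locus of diagonal elements with the fixed block partition, a permutation preserves $\bm B$ precisely when it maps blocks to blocks, necessarily of equal size; grouping the blocks by common size $s$, with $k_s$ blocks of size $s$, the permutations that shuffle the $s$ entries inside a block together with the $k_s$ equal-sized blocks among themselves assemble into the (finite, hence restricted) wreath product $S_s \wr S_{k_s}$, and the full stabiliser is the direct product $\prod_s (S_s \wr S_{k_s})$ over the occurring sizes.

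For the second assertion I would determine the pointwise stabiliser of $\bm B$, i.e. the kernel of the action. A permutation fixes every admissible deformation exactly when it fixes the value of each entry; as the block-values are generically distinct, this forces it to remain inside the blocks, so the kernel is the base $\prod_s (S_s)^{k_s}$ of the wreath products. Quotienting then yields $W_{\mf g \mid \bm{\mf h}} = \prod_s (S_s \wr S_{k_s})\big\slash \prod_s (S_s)^{k_s} = \prod_s S_{k_s}$, a direct product of symmetric groups, and this group visibly acts freely on the configuration of the $r$ distinct block-values (any nontrivial permutation of pairwise-distinct points moves the ordered tuple), in agreement with $\bm B \to \ol{\bm B}$ being Galois with group $W_{\mf g \mid \bm{\mf h}}$ as in Theorem~\ref{thm:local_wmcg_extension}.

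Finally, for the fundamental group I would pass to covering-space theory. The free action identifies $\ol{\bm B}$ with the quotient of $\mathrm{Conf}_r(\mb C)$ by $W_{\mf g \mid \bm{\mf h}} = \prod_s S_{k_s} \le S_r$, an intermediate covering between $\bm B$ and the unordered configuration space $\mathrm{UConf}_r(\mb C) = \mathrm{Conf}_r(\mb C)/S_r$, whose fundamental group is the full braid group $B_r$. Because configuration spaces are aspherical, the subgroup--covering correspondence applied to $1 \to P_r \to B_r \to S_r \to 1$ identifies $\pi_1(\ol{\bm B})$ with the preimage of $W_{\mf g \mid \bm{\mf h}}$ under $B_r \twoheadrightarrow S_r$, i.e. the subgroup of braids whose underlying permutation lies in $\prod_s S_{k_s}$; restricting the sequence then exhibits it as the asserted extension of $W_{\mf g \mid \bm{\mf h}}$ by the pure braid group $P_r$. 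The main obstacle is the bookkeeping at the very start: pinning down the precise admissibility conditions so that $\bm B$ really is the block-preserving configuration space with homotopy type $\mathrm{Conf}_r(\mb C)$, and checking that the residual Weyl action is exactly coordinate permutation restricted along $\prod_s S_{k_s} \hookrightarrow S_r$. Once these identifications are secured, all three statements reduce to the elementary group theory of wreath products and the standard topology of configuration spaces.
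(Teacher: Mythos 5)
Your proposal is correct and follows essentially the same route as the paper: identify $\bm B$ (up to homotopy) with the ordered configuration space of the $\abs J$ block-values, compute the setwise stabiliser as $\prod_s \bigl( \on S_{k_s} \ltimes (\on S_s)^{k_s} \bigr)$, quotient by the pointwise stabiliser $W_{\mf h} = \prod_s (\on S_s)^{k_s}$ to get $\prod_s \on S_{k_s}$, and then read off $\pi_1(\ol{\bm B})$ as the preimage of this subgroup under the augmentation $\Br_{\, \abs J} \to \on S_{\abs J}$, via the intermediate covering $\Conf_{\, \abs J} \to \ol{\bm B} \to \UConf_{\, \abs J}$. The only cosmetic differences are that you compute $\Stab_{W_{\mf g}}(\bm B)$ directly rather than via $U = \Ker(\Phi_{\mf h})$ (Lem.~\ref{lem:setwise_weyl_stabiliser} shows these agree), your wreath-product notation $\on S_s \wr \, \on S_{k_s}$ reverses the paper's convention $\on S_{K_s} \wr \, \on S_s$ for the same group, and your appeal to asphericity in the last step is unnecessary---the covering-space correspondence alone suffices.
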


In the general case instead we introduce a family of trees $(T,\bm r)$ with some decoration, called `ranked' fission trees, which depend on the choice of the irregular class $\ol Q$ (cf. Def.~\ref{def:ranked_fission_tree}, and compare with the unranked fission trees of~\cite[\S~5]{doucot_rembado_tamiozzo_2022_local_wild_mapping_class_groups_and_cabled_braids}).
Their automorphisms control the Coxeter-type groups in the general type-$A$ case:

\begin{theorem*}[Cf. Thm.~\ref{thm:tree_automorphisms} and Prop.~\ref{prop:split_sequence_weyl_fission_type_A}]
	The automorphism group $\Aut(T,\bm r)$ of the ranked fission tree is isomorphic to $W_{\mf g \mid \bm{\mf h}}$.
\end{theorem*}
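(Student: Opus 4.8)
The plan is to prove the isomorphism by induction along the fission filtration of $\ol Q$, matching the recursive construction of the ranked fission tree $(T,\bm r)$ with the split exact sequence describing $W_{\mf g \mid \bm{\mf h}}$. Throughout I work with $\mf g = \mf{gl}_n(\mb C)$—the case $\mf{sl}_n(\mb C)$ being identical, since the Weyl group and the fission data are unchanged by the centre—and identify $W_{\mf g} = S_n$ with the group of permutations of the exponential factors $q_1,\dots,q_n$ of a lift $Q$ of $\ol Q$. The guiding principle is that a tree automorphism is completely determined by, and conversely determines, its induced permutation of the leaves of $(T,\bm r)$, which are in canonical bijection with the blocks of fully coincident factors; this block-permutation lands in the Weyl-stabiliser of $\bm B$ modulo its pointwise stabiliser, i.e. in $W_{\mf g \mid \bm{\mf h}}$.

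First I would treat the base case of a single semisimple element $A = A_p \in \mf t$, i.e. a ranked tree of depth one: the root, with leaves weighted by the multiplicities of the distinct eigenvalues of $A$, every edge carrying the same rank. An automorphism may only permute leaves of equal multiplicity, whence $\Aut(T,\bm r) \cong \prod_s S_{k_s}$, where $k_s$ is the number of eigenvalue-blocks of size $s$. On the Weyl side, the computation of \S~\ref{sec:weyl_fission_step} (in the type-$A$ form of \S~\ref{sec:nonpure_type_A_example_step}) identifies $W_{\mf g \mid \bm{\mf h}}$ with the same product $\prod_s S_{k_s}$: the within-block permutations $\prod_s (S_s)^{k_s}$ act trivially on the block-equal locus $\bm B$ and are quotiented out, while the permutations of equal-sized blocks survive and act freely. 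Matching the two products through the leaf-action gives the isomorphism in depth one.

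For the inductive step I would peel off the top level of $(T,\bm r)$, corresponding to the coarsest fission by the leading coefficient. Writing the top-level children as $v_1,\dots,v_m$, each $v_i$ roots a ranked subtree $(T_i,\bm r_i)$ which is itself the fission tree of the residual irregular type inside the $i$-th block. Restricting a tree automorphism to the subtrees yields the base group $\prod_i \Aut(T_i,\bm r_i)$, and the induced permutation of the top-level children—necessarily preserving isomorphism-types of ranked subtrees—yields the quotient, so that grouping the children by subtree-type $\tau$ exhibits
\[ \Aut(T,\bm r) \cong \prod_{\tau} \bigl( \Aut(T_\tau,\bm r_\tau) \wr S_{c_\tau} \bigr), \]
an iterated (restricted) wreath product over the subtree groups, as in Thm.~\ref{thm:tree_automorphisms}. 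On the Weyl side, the split exact sequence of Prop.~\ref{prop:split_sequence_weyl_fission_type_A} realises $W_{\mf g \mid \bm{\mf h}}$ with base the product of the residual subquotients $W_{\mf{l}_\tau \mid \bm{\mf h}_\tau}$ and quotient the group $\prod_\tau S_{c_\tau}$ permuting Weyl-equivalent blocks. By the inductive hypothesis $\Aut(T_\tau,\bm r_\tau) \cong W_{\mf{l}_\tau \mid \bm{\mf h}_\tau}$, and the splitting supplied by the proposition lets me assemble the subtree isomorphisms into a global one compatible with the leaf-action.

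The hard part will be the combinatorial heart of the matching: proving that two top-level blocks are conjugate under the Weyl-stabiliser of $\bm B$ exactly when their ranked subtrees are isomorphic, so that the top-level factors $S_{c_\tau}$ on the two sides coincide. This is where the \emph{ranking} is essential—without it, blocks splitting at different pole orders could be spuriously identified, breaking the correspondence—and it amounts to showing that the ranked fission tree is a complete invariant of the admissible Weyl-equivalence of blocks (a faithfulness statement for Def.~\ref{def:ranked_fission_tree}). Granting this, the induced leaf-permutation map is a well-defined homomorphism $\Aut(T,\bm r) \to W_{\mf g \mid \bm{\mf h}}$; injectivity follows because the labelled tree is reconstructible from the nested partitions of its leaves, and surjectivity because every admissible block-permutation visibly preserves ranks and subtree-types, hence lifts to a tree automorphism. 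The two inductive decompositions then identify the extensions level by level, completing the proof.
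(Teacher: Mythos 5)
Your proposal is correct and follows essentially the same route as the paper: induction on the height of the tree, with the depth-one case matched to the single-coefficient computation of $W_{\mf g\mid\mf h}\simeq\prod_i \on S_{K_i}$, and the inductive step given by the wreath-product decomposition over isomorphism classes of maximal proper subtrees on both the tree side and the Weyl side. The only organisational difference is that the paper first identifies the \emph{extended} automorphism group $\wt{\Aut}(T,\bm r)$ with $\Stab_{W_{\mf g}}(\bm U)$ and then quotients by $W_{\mf h_1}$ via a lemma on quotients of wreath products, whereas you match the quotients directly; the combinatorial point you flag as the hard part (isomorphic ranked subtrees $\Leftrightarrow$ blocks interchangeable within the admissible deformation space) is handled at the same level of detail in the paper's own argument.
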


Finally we attach a (full/nonpure) `cabled' braid group $\ms B(T,\bm r)$ to any ranked fission tree, in Def.~\ref{def:cabled_braid_group}, with a recursive algorithm (along maximal subtrees): this relies on the operadic composition of the symmetric and braid group operads, extending the \emph{pure} cabled braid group of~\cite{doucot_rembado_tamiozzo_2022_local_wild_mapping_class_groups_and_cabled_braids}---which rests in turn on the pure braid group operad.

The main result of \S~\ref{sec:nonpure_type_A_example} is that the elements of type-$A$ full/nonpure local WMCGs are precisely such `cabled' braids, in a proof of the multilevel braiding conjecture that appears e.g. in~\cite{ramis_2012_iso_irregular_deformations_of_linear_ode_and_dynamics_of_painleve_equations}, extending~\cite{doucot_rembado_tamiozzo_2022_local_wild_mapping_class_groups_and_cabled_braids}:
this is an instance of the new braiding first envisioned in~\cite{boalch_2014_geometry_and_braiding_of_stokes_data_fission_and_wild_character_varieties}.

\begin{theorem*}[Cf. Thm.~\ref{thm:type_A_local_wmcgs_are_cabled_braid_groups}]
	There is a group isomorphism $\Gamma_{\ol Q} \simeq \ms B(T,\bm r)$, where $(T,\bm r)$ is the ranked fission tree associated with the irregular class $\ol Q$.
\end{theorem*}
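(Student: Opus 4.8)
The plan is to realise both the full/nonpure type-$A$ local WMCG $\pi_1(\ol{\bm B})$ and the cabled braid group $\CBr(T,\bm r)$ as total groups of two short exact sequences, and then to identify these sequences termwise. On the geometric side, Thm.~\ref{thm:local_wmcg_extension} presents the full/nonpure local WMCG as an extension
\[
	1 \to \pi_1(\bm B) \to \pi_1(\ol{\bm B}) \to W_{\mf g \mid \bm{\mf h}} \to 1,
\]
whose kernel is the pure local WMCG. In type $A$ this kernel is, by the previous paper~\cite{doucot_rembado_tamiozzo_2022_local_wild_mapping_class_groups_and_cabled_braids}, the pure cabled braid group attached to $T$, while the quotient is identified with the automorphism group $\Aut(T,\bm r)$ of the ranked fission tree through Thm.~\ref{thm:tree_automorphisms} and Prop.~\ref{prop:split_sequence_weyl_fission_type_A}. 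Dually, I would read off from the recursive definition of $\CBr(T,\bm r)$ (Def.~\ref{def:cabled_braid_group}) the parallel extension
\[
	1 \to \pi_1(\bm B) \to \CBr(T,\bm r) \to \Aut(T,\bm r) \to 1,
\]
where the surjection records the tree automorphism a cabled braid induces on the levels of $(T,\bm r)$, and the kernel is once more the pure cabled braid group.

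Establishing the second sequence, and matching it with the first, proceeds by induction on the height of the fission tree, mirroring the recursion over maximal subtrees that defines $\CBr(T,\bm r)$. The base case is a single fission step: here Prop.~\ref{prop:semipure_braid_group} identifies the one-step local WMCG with the subgroup of braids whose underlying permutation lies in the relevant product of symmetric groups, i.e. the one-step $W_{\mf g \mid \bm{\mf h}}$ of Cor.~\ref{cor:reduced_weyl_group_type_A}, and Prop.~\ref{prop:kernel_stabiliser_type_A} pins down the Weyl-stabiliser as the corresponding (restricted) wreath product. The inductive step uses the operadic composition of the symmetric- and braid-group operads: passing from the maximal subtrees to $T$ corresponds geometrically to building the relevant level of $\ol{\bm B}$ as a cabled configuration space, whose fundamental group is the operadic composite of the outer braid/symmetric group with the full/nonpure WMCGs of the subtrees.

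With both extensions in hand, I would assemble a commutative ladder between them. By induction the kernels agree — both are the pure cabled braid group of~\cite{doucot_rembado_tamiozzo_2022_local_wild_mapping_class_groups_and_cabled_braids}, via $\pi_1(\bm B)\simeq{}$pure cabled braids — and the quotients agree through Thm.~\ref{thm:tree_automorphisms}. After constructing a comparison homomorphism $\CBr(T,\bm r)\to\pi_1(\ol{\bm B})$ recursively from this shared data and checking that the conjugation actions of $W_{\mf g \mid \bm{\mf h}}\simeq\Aut(T,\bm r)$ on the kernel coincide on both sides, the five lemma upgrades the two outer isomorphisms to the desired isomorphism $\pi_1(\ol{\bm B})\simeq\CBr(T,\bm r)$ of total groups.

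The hard part will be the inductive step: verifying that the operadic composition encoded in Def.~\ref{def:cabled_braid_group} faithfully reflects the geometric gluing of deformation spaces along the fission filtration. Concretely, I must show that cabling the outer group by the full/nonpure WMCGs of the maximal subtrees reproduces exactly the fundamental group of the stratum of $\ol{\bm B}$ cut out by that fission step, with the correct interaction between the permutation part (the symmetric-group factors coming from the Weyl quotient) and the braiding part. This is where the ranking $\bm r$ is essential, since it dictates precisely which isomorphic, equal-ranked subtrees may be permuted and which are rigid, and hence which subgroup of braids actually occurs as $\pi_1(\ol{\bm B})$. Matching this combinatorial bookkeeping against the geometry of coalescing irregular-type coefficients is the crux; once single-step compatibility is secured, the global statement follows by threading the five lemma through the recursion.
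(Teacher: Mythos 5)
Your induction skeleton---recursion on the height of the fission tree, base case handled by Prop.~\ref{prop:semipure_braid_group}, recursion organised over maximal proper subtrees---matches the paper's proof of Thm.~\ref{thm:type_A_local_wmcgs_are_cabled_braid_groups}. But the mechanism you propose for assembling the final isomorphism has a genuine gap, and the step you defer is the entire theorem. You want to compare the two \emph{global} extensions with kernel $\PCBr(T) \simeq \pi_1(\bm B)$ and quotient $\Aut(T,\bm r) \simeq W_{\mf g \mid \bm{\mf h}}$, and then invoke the five lemma. The five lemma, however, presupposes a homomorphism $\CBr(T,\bm r) \to \pi_1(\ol{\bm B})$ making the ladder commute; knowing that the kernels, the quotients, and the outer conjugation actions agree does \emph{not} produce such a map, nor does this data determine the extension up to isomorphism---nonisomorphic extensions can share all of it. Nor can you manufacture the map from compatible splittings, because neither of your two extensions splits in general: already for $p = 1$ they reduce to $1 \to \PBr_n \to \Br_n \to \on S_n \to 1$, which has no section. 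So ``constructing a comparison homomorphism recursively from this shared data'' is precisely the content you have not supplied.

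The paper never compares extensions by the finite group $\Aut(T,\bm r)$ at all: its recursion peels off \emph{one level of braiding}, so the quotient in the inductive step is the semipure braid group $\Br_{\varphi} \sse \Br_{\abs{J_p}}$ of the top coefficient, not $\Aut(T,\bm r)$. The key input is geometric and is absent from your proposal: the natural map $\ol{\bm B}(T) \to X_{\varphi} = \Conf_{\abs{J_p}} \big\slash \on S_{\varphi}$ onto the semiordered configuration space is a \emph{locally trivial fibre bundle} with fibre $\prod_{\wt{\mc T}} \ol{\bm B}(t)^{n(t)}$, proven by performing the $\Aut(T,\bm r)$-quotient in two steps over a trivialising cover (first the factors $1 \wr \Aut(t,\bm r_0)$, then the complementary permutation subgroups acting on a slice). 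The homotopy exact sequence of this bundle---using that $X_{\varphi}$ is a $K(\pi,1)$ and that the fibres are connected---gives $1 \to \prod_{\wt{\mc T}} \CBr(t,\bm r_0)^{n(t)} \to \pi_1\bigl(\ol{\bm B}(T)\bigr) \to \Br_{\varphi} \to 1$ by the recursive hypothesis, and this sequence \emph{does} split, since continuous (e.g.\ constant) maps from $X_{\varphi}$ into the fibre induce global sections. Hence $\pi_1\bigl(\ol{\bm B}(T)\bigr)$ is literally the semidirect product $\Br_{\varphi} \ltimes \prod_{\wt{\mc T}} \CBr(t,\bm r_0)^{n(t)}$ that defines $\CBr(T,\bm r)$ in Def.~\ref{def:cabled_braid_group}, and the induction closes with no extension-comparison needed; your global extension statement then \emph{follows} from the isomorphism (as in the paper), rather than feeding into its proof. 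This fibration-plus-section argument is exactly the ``crux'' you flag---cabling reflecting the geometry of coalescing coefficients---and without it, or some substitute construction of the comparison map, the five-lemma framing does not constitute a proof.
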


All the Lie algebras in this text are tacitly defined over $\mb C$, and all tensor products are $\mb C$-bilinear.

Some basic notions and conventions, used throughout the body of the paper, are collected in \S~\ref{sec:app_notions}, while \S~\ref{sec:app_lemmata} contains the proof of few lemmata.
In \S~\ref{sec:app_isomonodromy_systems} we spell out the relation of wild Riemann surfaces with the much-studied Hamiltonian viewpoint on isomonodromic deformations.

The end of remarks/examples is signaled by a $\triangle$.

\renewcommand{\thetheorem}{\arabic{section}.\arabic{theorem}} %

\section{Full/nonpure local WMCG}
\label{sec:local_wmcg}

Let $\Sigma$ be a Riemann surface, $G$ a connected complex reductive Lie group, $\mf g = \Lie(G)$ its Lie algebra, $T \sse G$ a maximal torus, and $\mf t = \Lie(T) \sse \mf g$ the associated Cartan subalgebra.
Denote then by $\Phi_\mf g = \Phi(\mf g,\mf t) \sse \mf t^{\dual}$ the root system of the split Lie algebra $(\mf g, \mf t)$, and by $W_\mf g = N_G(T) \bs T$ the Weyl group.

Choose a point $a \in \Sigma$, and let
\begin{equation}
	\label{eq:irregular_type}
	Q \in \mf t \ots \ms T_{\Sigma,a} \, , \qquad \ms T_{\Sigma,a} \ceqq \wh{\ms K}_{\Sigma,a} \bs \wh{\ms O}_{\Sigma,a} \, ,
\end{equation}
be an untwisted irregular type based there, introducing the completed local ring $\wh{\ms O}_{\Sigma,a}$ of the surface and its fraction field $\wh{\ms K}_{\Sigma,a}$.
Recall if $z$ is a local coordinate on $\Sigma$ with $z(a) = 0$ then~\eqref{eq:irregular_type} becomes
\begin{equation*}
	Q = \sum_{i = 1}^p A_i z^{-i} \in z^{-1} \mf t[z^{-1}] \simeq \mf t (\!(z)\!) \bs \mf t \llb z \rrb \, ,
\end{equation*}
for suitable coefficients $A_i \in \mf t$ and for an integer $p \geq 1$.
(Hereafter we refer to this simply as an `irregular type', always untwisted/unramified.)

As explained in the introduction, the moduli spaces attached to~\eqref{eq:irregular_type} (the de Rham/Betti spaces~\cite{boalch_2012_hyperkaehler_manifolds_and_nonbelian_hodge_theory_of_irregular_curves}) only depend on the Weyl-orbit of~\eqref{eq:irregular_type}, denoted
\begin{equation}
	\label{eq:bare_irregular_type}
	\ol Q \in \bigl( \mf t \ots \ms T_{\Sigma,a} \bigr) \bs W_\mf g \, .
\end{equation}
Here the Weyl group acts on the Cartan subalgebra---and trivially on the other tensor factor; the element~\eqref{eq:bare_irregular_type} defines an irregular class, a.k.a. `bare' irregular type~\cite[Rem.~10.6]{boalch_2014_geometry_and_braiding_of_stokes_data_fission_and_wild_character_varieties} (cf.~\cite{boalch_yamakawa_2015_twisted_wild_character_varieties,boalch_doucot_rembado_2022_twisted_local_wild_mapping_class_groups_configuration_spaces_fission_trees_and_complex_braids,	doucout_rembado_yamakawa_twisted_g_local_wild_mapping_class_groups} in the twisted case).

If $Q$ is a `starting' irregular type, then we have associated with it the (universal) admissible deformation space $\bm B_Q$ in~\cite{doucot_rembado_tamiozzo_2022_local_wild_mapping_class_groups_and_cabled_braids}, cf.~\cite[Ex.~10.1]{boalch_2014_geometry_and_braiding_of_stokes_data_fission_and_wild_character_varieties}.
In brief, this is the (connected) complex manifold $\bm B_Q = \prod_{i = 1}^p \bm B_{A_i}$, with
\begin{equation}
	\label{eq:deformation_irregular_type_factor}
	\bm B_{A_i} \ceqq \bigcap_{d_{\alpha} < i} \Ker(\alpha) \cap \bigcap_{d_{\alpha} = i} \bigl( \mf t \sm \Ker(\alpha) \bigr) \sse \mf t \, ,
\end{equation}
where
\begin{equation}
	\label{eq:pole_orders}
	d_{\alpha} = \ord (\alpha \circ Q) \, , \qquad \alpha \in \Phi_\mf g \, ,
\end{equation}
taking the pole order at $a \in \Sigma$.

Then the pure local wild mapping class group of the wild Riemann surface $(\Sigma,a,Q)$ is $\Gamma_Q \ceqq \pi_1(\bm B_Q,Q)$, i.e. the fundamental group of the (pointed) deformation space of $Q$.

\begin{remark}[Terminology]
	\label{rem:terminology}
	The term `pure' is reminiscent of pure braid groups, which are fundamental groups of configuration spaces of \emph{ordered} points.
	Below we will instead consider the `full/nonpure' case, which in turn is an analogue of the fundamental group for \emph{unordered} configurations.
	The latter configuration space arises by modding out the action of a symmetric group (permuting the points): then in our situation this is generalised by an action of the Weyl group.

	The picture in the case of simple poles is as follows.
	Suppose $\mc O \sse \mf g$ is a semisimple adjoint $G$-orbit, so that $\mc O \cap \mf t \sse \mc O$ is nonempty: a \emph{marking} of $\mc O$ is the choice of a point $A \in \mc O \cap \mf t$.
	In turn $\mc O \cap \mf t \sse \mf t$ then coincides with the $W_{\mf g}$-orbit of the vector $A \in \mf t$, and the Weyl group acts by forgetting the choice of marking.
	This is essentially because semisimple orbits correspond bijectively to the quotient $\mf t \bs W_{\mf g}$, cf.~\cite[\S~2]{collingwood_mcgovern_1993_nilpotent_orbits_in_semisimple_lie_algebras}.

	Then the topology of each semisimple orbit is determined by the centraliser of its marked point, i.e. the Levi factor of a parabolic subgroup of $G$, and this yields a partition of $\mf t \bs W_{\mf g}$.
	In particular there is a bulk consisting of $\mf t_{\reg} \bs W_{\mf g}$, i.e. the generic/regular semisimple elements, and its fundamental group is the full/nonpure $\mf g$-braid group (cf. \S~\ref{sec:app_notions}).
	In the nongeneric case one instead finds more complicated hyperplane arrangements (possibly noncrystallographic~\cite{doucot_rembado_tamiozzo_2022_local_wild_mapping_class_groups_and_cabled_braids,rembado_2024_a_colourful_classification_of_quasi_root_systems_and_hyperplane_arrangements}), and more complicated reflection groups; so in turn more complicated fundamental groups.

	Finally we take this one step further, to get to the theory of wild/irregular singularities, considering more than semisimple orbits inside $\mf g$.
	Rather we consider `very good' orbits inside the (nonreductive) Lie algebra $\mf g_p = \mf g \llb z \rrb \bs z^p \mf g \llb z \rrb$, for an integer $p \geq 1$ as above, cf.~\cite[\S~1.5]{boalch_2017_wild_character_varieties_meromorphic_hitchin_systems_and_dynkin_graphs}.
	Choosing an irregular type means fixing a \emph{marking} of such orbits, and here we get rid of this choice to get the intrinsic spaces of (local) isomonodromic deformations, whose fundamental group is then the `full/nonpure' (local) wild mapping class group.
\end{remark}

Concretely, two deformations of $Q$ are equivalent if they lie in the same $W_\mf g$-orbit inside $\mf t \ots \ms T_{\Sigma,a}$, in which case they define the same irregular class.
This leads to admissible deformations of the `starting' irregular class $\ol Q$, and to the main definition.

\begin{definition}
	\label{def:local_WMCG}

	The \emph{full/nonpure local wild mapping class group} of the (bare) wild Riemann surface $\bm \Sigma = (\Sigma,a,\ol Q \bigr)$ is
	\begin{equation}
		\label{eq:nonpure_local_WMCG}
		\Gamma_{\ol Q} \ceqq \pi_1 \bigl( \bm B_{\ol Q},\ol Q \bigr) \, ,
	\end{equation}
	where $\bm B_{\ol Q} = \bm B_Q \bs \!\!\sim$ is the topological quotient with respect to the equivalence relation
	\begin{equation}
		\label{eq:equivalence_relation}
		Q_1 \sim Q_2 \qquad \text{if} \qquad W_\mf g Q_1 = W_\mf g Q_2 \sse \mf t \ots \ms T_{\Sigma,a} \, .
	\end{equation}
\end{definition}

(Hereafter we will simply refer to~\eqref{eq:nonpure_local_WMCG} as the `WMCG', always full/nonpure and local.)

Note $\Gamma_{\ol Q}$ depends on the root system $\Phi_{\mf g} \sse \mf t^{\dual}$, and the tuple of integers $\bm d = (d_{\alpha})_{\alpha \in \Phi_\mf g}$.
It is in general larger then its pure counterpart, as some nonclosed paths in $\bm B_Q$ may become loops in $\bm B_{\ol Q}$.

\begin{remark}
	The space $\bm B_Q$ itself depends on the irregular type $Q$, not just on the underlying irregular class.
	However if $w \in W_\mf g$ then $\bm B_Q$ is homeomorphic to $w(\bm B_Q) = \bm B_{w(Q)}$, and~\eqref{eq:nonpure_local_WMCG} only depends on $\ol Q$.
\end{remark}

\begin{remark}[Intrinsic definition]
	Note one also has
	\begin{equation*}
		\bm B_Q = \bigcap_{\alpha \in \Phi_{\mf g}} \Set{ Q' = \sum_{i = 1}^p A'_i z^{-i} | \ord( \alpha \circ Q' ) = d_{\alpha} } \sse \mf t \ots \ms T_{\Sigma,a} \, ,
	\end{equation*}
	using the notation of~\eqref{eq:pole_orders}, and the factorisation~\eqref{eq:deformation_irregular_type_factor} follows (cf.~\cite[\S~1]{doucot_rembado_tamiozzo_2022_local_wild_mapping_class_groups_and_cabled_braids}).
	This gives another viewpoint on how the (germs of) meromorphic functions $q_{\alpha} = \alpha \circ Q$ determine the space.\fn{
		These are exponential factors of local fundamental solutions for the linear differential equations associated with the meromorphic connection with irregular type $Q$, featuring in the Stokes data of the connection.}

	In turn, the pole orders of the $q_{\alpha}$ are well defined up to local biholomorphisms of $\Sigma$ which fix the marked point $a \in \Sigma$.
	Hence the integers $d_{\alpha}$ only depend on the element $Q \in \mf t \ots \ms T_{\Sigma,a}$, and not on the identifications
	\begin{equation*}
		\wh{\ms O}_{\Sigma,a} \simeq \mb C \llb z \rrb \, , \quad \wh{\ms K}_{\Sigma,a} \simeq \mb C (\!(z)\!) \, , \quad \ms T_{\Sigma,a} \simeq \mb C (\!(z)\!) \bs \mb C \llb z \rrb \, .
	\end{equation*}
	In particular the deformation spaces $\bm B_Q$ and $\bm B_{\ol Q}$ are independent of a choice of local coordinate vanishing at the marked point.
\end{remark}

\begin{remark}
	Moreover all Cartan subalgebras $\mf t \sse \mf g$ are conjugated by (inner) Lie-algebra automorphisms of $\mf g$, which in turn induce homeomorphisms of the resulting deformation spaces $\bm B_Q$.
	Hence the isomorphism class of $\Gamma_Q$ does \textit{not} depend on the choice of the Cartan subalgebra, and so in turn neither does that of $\Gamma_{\ol Q}$.
\end{remark}

Now the Weyl action does \emph{not} preserve $\bm B_Q$ in the nongeneric case, i.e. the case where $A_p$ is not regular, so we first need to describe the subset
\begin{equation*}
	W_\mf g Q \cap \bm B_Q \sse W_\mf g Q \, ,
\end{equation*}
and further understand the Weyl-stabiliser of the irregular type.
(This is already visible in the tame case $p = 1$, cf. Rk.~\ref{rem:terminology} above.)

\section{Weyl group fission}
\label{sec:weyl_fission}

\subsection{Inductive step}
\label{sec:weyl_fission_step}

We first consider the case of a single coefficient, i.e. $Q = Az^{-1}$.
In the general case the irregular type is transformed along the diagonal Weyl action on each coefficient.

Choose then $A \in \mf t$, and let $\mf h = \mf Z_\mf g(A) \sse \mf g$ be the centraliser: it is the (reductive) Levi factor of a parabolic subalgebra of $\mf g$.
The associated deformation space~\eqref{eq:deformation_irregular_type_factor} becomes
\begin{equation}
	\label{eq:true_complement}
	\bm B_A = \Ker(\Phi_\mf h) \cap \bigcap_{\Phi_\mf g \sm \Phi_\mf h} \bigl( \mf t \sm \Ker(\alpha) \bigr) \sse \Ker(\Phi_\mf h) \, ,
\end{equation}
where
\begin{equation*}
	\Ker(\Phi_{\mf h}) = \Set{ A \in \mf t | \Braket{ \alpha | A } = 0 \text{ for } \alpha \in \Phi_{\mf h} } = \bigcap_{\Phi_{\mf h}} \Ker(\alpha) \sse \mf t \, .
\end{equation*}
For later use we set $U \ceqq \Ker(\Phi_\mf h)$.

Now if $w \in \Stab_{W_\mf g}(\bm B_A) \sse W_\mf g$ then certainly $wA \in \bm B_A$, but the converse is true.
To state this let $\mc O_A \subseteq \bm B_A$ be the orbit of $A$ under the action of $\Stab_{W_\mf g}(\bm B_A)$; then:
\begin{lemma}
	\label{lem:setwise_weyl_stabiliser}

	One has $\Stab_{W_\mf g}(U) = \Stab_{W_\mf g}(\bm B_A)$, and $(W_\mf g A) \cap \bm B_A = \mc O_A$.
\end{lemma}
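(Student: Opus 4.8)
The plan is to reduce both assertions to a single observation: for $w \in W_\mf g$ the condition $wA \in U$ already forces $w$ to preserve the whole subspace $U = \Ker(\Phi_\mf h)$. I would first record two structural facts. Since $A \in U$ and $\Phi_\mf h = \set{\alpha \in \Phi_\mf g : \alpha(A) = 0}$, one has the identification
\[
	\Phi_\mf h = \bigl\{ \alpha \in \Phi_\mf g : \alpha\rvert_U \equiv 0 \bigr\} \, ,
\]
the inclusion $\sse$ being the very definition of $U$, and the reverse holding because any root vanishing on $U$ vanishes at $A \in U$. Secondly, by~\eqref{eq:true_complement} the set $\bm B$ is the complement, inside the linear subspace $U$, of the finite arrangement of hyperplanes $\Ker(\alpha) \cap U$ for $\alpha \in \Phi_\mf g \sm \Phi_\mf h$; each is a \emph{proper} hyperplane of $U$ precisely because $\alpha\rvert_U \not\equiv 0$ for such $\alpha$. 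Hence $\bm B$ is an open, dense (indeed connected) subset of $U$, its closure in $U$ is all of $U$, and it contains $A$.

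For the first equality $\Stab_{W_\mf g}(U) = \Stab_{W_\mf g}(\bm B)$ I would argue both inclusions. The inclusion $\Stab_{W_\mf g}(\bm B) \sse \Stab_{W_\mf g}(U)$ is immediate, since any $w$ fixing $\bm B$ setwise fixes its closure $U$. For the reverse, take $w$ with $w(U) = U$. Using the induced $W_\mf g$-action on $\mf t^\dual$, for which $\Ker(w\alpha) = w\Ker(\alpha)$, the hypothesis $w(U) = U$ together with the identification above shows that $w$ permutes $\set{\alpha : \alpha\rvert_U \equiv 0} = \Phi_\mf h$, and therefore also $\Phi_\mf g \sm \Phi_\mf h$. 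Consequently $w$ permutes the defining hyperplanes $\Ker(\alpha) \cap U$ and preserves their union, so it preserves the complement $\bm B$ within $U$; thus $w \in \Stab_{W_\mf g}(\bm B)$.

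For the orbit identity, the inclusion $\mc O_A \sse (W_\mf g A) \cap \bm B$ is clear, as $\Stab_{W_\mf g}(\bm B) \sse W_\mf g$ and $wA \in w(\bm B) = \bm B$ whenever $w$ lies in the stabiliser. The crux is the reverse inclusion. Given $w \in W_\mf g$ with $wA \in \bm B \sse U$, I would compute the set of roots vanishing at $wA$: a short manipulation of the dual action identifies it with the image $w\Phi_\mf h$ of the roots vanishing at $A$. Since $wA \in U$ forces every $\alpha \in \Phi_\mf h$ to vanish at $wA$, this gives $\Phi_\mf h \sse w\Phi_\mf h$; as $w$ acts bijectively on $\Phi_\mf g$, these finite sets have equal cardinality, whence $w\Phi_\mf h = \Phi_\mf h$ and therefore $w(U) = U$. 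By the first part $w \in \Stab_{W_\mf g}(\bm B)$, and so $wA = w \cdot A \in \mc O_A$.

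I expect the genuine content to sit in this last finite-cardinality argument---the passage from \emph{``$wA$ lands in $U$''} to \emph{``$w$ stabilises $U$''}, which is exactly what makes the converse hinted at before the statement hold. Everything else (the density argument and the permutation of the hyperplane arrangement) is formal once the identification $\Phi_\mf h = \set{\alpha : \alpha\rvert_U \equiv 0}$ is available. The one point to handle carefully is the bookkeeping of the $W_\mf g$-action on $\mf t^\dual$ versus $\mf t$, so that the roots vanishing at $wA$ really come out as $w\Phi_\mf h$ (and not $w^{-1}\Phi_\mf h$); alternatively, one may invoke that the $W_\mf g$-stabiliser of any point of $\mf t$ is the reflection subgroup generated by the roots vanishing there, which packages the same computation.
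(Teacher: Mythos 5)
Your proposal is correct and follows essentially the same route as the paper: both reduce everything to the fact that the Weyl group permutes root hyperplanes along the permutation of roots, so that preserving $U$ (or $\bm B$, or sending $A$ into $\bm B$) is equivalent to $w(\Phi_\mf h) = \Phi_\mf h$, using the identification $\Phi_\mf h = \set{\alpha : \alpha\rvert_U \equiv 0}$ which holds because $A \in U$. The only cosmetic difference is that you obtain $\Stab_{W_\mf g}(\bm B) \sse \Stab_{W_\mf g}(U)$ via density of $\bm B$ in $U$, whereas the paper reads both stabilisers directly off the hyperplane description~\eqref{eq:true_complement}; your finite-cardinality step upgrading $\Phi_\mf h \sse w\Phi_\mf h$ to equality is exactly the content the paper leaves implicit.
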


\begin{proof}
	The Weyl group permutes the root hyperplanes via
	\begin{equation*}
		w \bigl( \Ker(\alpha) \bigr) = \Ker(w\alpha) \, , \qquad w \in W_\mf g \, , \, \alpha \in \Phi_\mf g \, ,
	\end{equation*}
	i.e. along the permutation of the roots. (Recall we identify $W(\Phi_{\mf{g}}) \sse \GL(\mf t^{\dual})$ and $W(\Phi_{\mf{g}}^{\dual}) \sse \GL(\mf t)$, cf. \S~\ref{sec:app_notions}.)
	Hence $w(\bm B_A) \sse \bm B_A$ if and only if $w \in W_\mf g$ preserves the partition $\Phi_\mf g = \Phi_\mf h \cup (\Phi_\mf g \sm \Phi_\mf h)$, by~\eqref{eq:true_complement}.
	In turn this is equivalent to $w(\Phi_\mf h) \sse \Phi_\mf h$, proving the first statement.

	Analogously if $A' \ceqq wA \in \bm B_A$ then $A'$ lies on all the root hyperplanes of $\Phi_{\mf h}$, and out of all the root hyperplanes of the complement:
	it follows that $w$ preserves the above partition, whence the inclusion $(W_\mf g A) \cap \bm B_A \sse \mc O_A$---and the opposite one is tautological.
\end{proof}

Thus the restriction of orbits to the deformation space is controlled by the setwise stabiliser of $U \sse \mf t$.

\begin{remark}
	The extremal cases are $A = 0$, in which case $U = \mf t$ and $\Stab_{W_\mf g}(U) = 1$; and $A \in \mf t_{\reg}$, in which case $U = (0)$ and $\Stab_{W_\mf g}(U) = W_\mf g$.
\end{remark}

Now the Weyl group $W_\mf h = W(\Phi_\mf h) \sse W_\mf g$ of the Levi factor lies in the setwise stabiliser of $U$, but in general the inclusion is proper.
Namely by definition
\begin{equation*}
	\Ker(\alpha) = \Ker( \sigma_{\alpha} - 1 ) \sse \mf t \, , \qquad \alpha \in \Phi_\mf g \, ,
\end{equation*}
and the subgroup $W_\mf h$ is generated by the reflections along the hyperplanes of the subsystem $\Phi_\mf h \sse \Phi_\mf g$: hence automatically any element of $W_\mf h$ acts trivially on $U = \Ker(\Phi_\mf h)$, i.e.
\begin{equation*}
	W_\mf h \sse (W_\mf g)_U \sse \Stab_{W_\mf g}(U) \, .
\end{equation*}

However it is possible to show the first inclusion is an equality, and more precisely that $W_\mf h$ is the (maximal) parabolic subgroup fixing any element of~\eqref{eq:true_complement}:

\begin{lemma}
	\label{lem:parabolic_weyl_group}

	One has $W_\mf h = (W_\mf g)_U = (W_\mf g)_{A'}$, for all $A' \in \bm B_A$.
\end{lemma}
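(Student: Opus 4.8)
The plan is to close a chain of inclusions and read off all three equalities at once. From the discussion preceding the statement we already have $W_\mf h \sse (W_\mf g)_U$ (every reflection along a hyperplane $\Ker(\alpha)$ with $\alpha \in \Phi_\mf h$ fixes $U = \Ker(\Phi_\mf h)$ pointwise). Hence it suffices to prove the two further inclusions $(W_\mf g)_U \sse (W_\mf g)_A$ and $(W_\mf g)_A \sse W_\mf h$: together with the given one these force $W_\mf h = (W_\mf g)_U = (W_\mf g)_A$.

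The first is immediate once one observes that $A \in U$. Indeed $\Phi_\mf h = \Set{\alpha \in \Phi_\mf g | \alpha(A) = 0}$ is precisely the set of roots of the centraliser $\mf h = \mf Z_\mf g(A)$, so by definition $A \in \bigcap_{\alpha \in \Phi_\mf h} \Ker(\alpha) = U$; thus any $w \in W_\mf g$ fixing $U$ pointwise in particular fixes $A$, giving $(W_\mf g)_U \sse (W_\mf g)_A$.

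The substance is the remaining inclusion $(W_\mf g)_A \sse W_\mf h$, for which I would invoke Steinberg's fixed-point theorem for finite reflection groups: the pointwise stabiliser of any subset of the reflection representation is generated by the reflections it contains. To apply it cleanly one first reduces to the real Coxeter group underlying $W_\mf g$. Writing $\mf t = \mf t_{\mb R} \otimes_{\mb R} \mb C$, with $\mf t_{\mb R}$ the real span of the coroots, and decomposing $A = A_1 + \sqrt{-1}\,A_2$ with $A_1, A_2 \in \mf t_{\mb R}$, the $\mb R$-valuedness of the roots on $\mf t_{\mb R}$ shows that $wA = A$ is equivalent to $w$ fixing both $A_1$ and $A_2$, and that $\alpha(A) = 0$ is equivalent to $\alpha(A_1) = \alpha(A_2) = 0$. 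Thus $(W_\mf g)_A$ is the pointwise stabiliser of the finite set $\Set{A_1, A_2} \sse \mf t_{\mb R}$, and Steinberg's theorem identifies it with the subgroup generated by those reflections $\sigma_{\alpha}$ fixing both $A_1$ and $A_2$---equivalently by $\Set{\sigma_{\alpha} | \alpha \in \Phi_\mf h}$, which is exactly $W_\mf h$. Chaining $W_\mf h \sse (W_\mf g)_U \sse (W_\mf g)_A \sse W_\mf h$ then yields the three claimed equalities.

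The main obstacle is the correct invocation of Steinberg's theorem in the complexified setting: one must verify that the stabiliser of the complex element $A$ is genuinely the pointwise stabiliser of a subset of the \emph{real} reflection representation (here $\Set{A_1, A_2}$), so that the reflections fixing it are precisely those indexed by $\Phi_\mf h$. A self-contained alternative, avoiding the black box, would be to argue geometrically via the (open) facet of the Coxeter arrangement in $\mf t_{\mb R}$ containing the pair $(A_1, A_2)$, whose pointwise stabiliser is the standard parabolic subgroup generated by the reflections through the walls containing that facet---again recovering $W_\mf h$.
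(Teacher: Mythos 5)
Your proof is correct, and the chain $W_\mf h \sse (W_\mf g)_U \sse (W_\mf g)_A \sse W_\mf h$ is exactly the skeleton of the paper's argument (the paper gets $(W_\mf g)_U \sse (W_\mf g)_A$ from $A \in \bm B \sse U$, you from $A \in U$ directly---same thing). The difference is in the one substantive inclusion $(W_\mf g)_A \sse W_\mf h$. The paper stays inside Lie group theory: it realises $w$ as a class of some $g \in N_G(T)$ with $\Ad_g(A) = A$, so that $g$ lies in the centraliser of $A$ in $G$, which (being the centraliser of a torus in a connected group) is the connected subgroup $H$ integrating $\mf h$; hence $g \in N_H(T)$ and $w \in W_\mf h = N_H(T) \big\slash T$. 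You instead invoke Steinberg's fixed-point theorem for the finite reflection group $W_\mf g$ acting on $\mf t_{\mb R}$, after the (necessary and correctly handled) reduction $A = A_1 + \sqrt{-1}\, A_2$ showing that $(W_\mf g)_A$ is the pointwise stabiliser of $\set{A_1,A_2} \sse \mf t_{\mb R}$ and that the reflections fixing this pair are precisely the $\sigma_\alpha$ with $\alpha \in \Phi_\mf h$. Your route is purely Coxeter-theoretic, so it would apply verbatim to any finite reflection group and any ``centraliser'' root subsystem, at the cost of importing Steinberg's theorem as a black box (or the facet/parabolic-subgroup argument you sketch as a substitute); the paper's route is self-contained modulo the standard fact that centralisers of semisimple Lie algebra elements in connected reductive groups are connected, and fits the Lie-theoretic framing used elsewhere in the section. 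Both are complete proofs.
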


\begin{proof}
	In principle
	\begin{equation*}
		(W_\mf g)_U \sse (W_{\mf g})_{\bm B_A} \sse (W_\mf g)_{A'} \, , \qquad A' \in \bm B_A \, ,
	\end{equation*}
	so it is enough to show the inclusion $(W_\mf g)_{A'} \sse W_\mf h$, i.e. that any element of $W_\mf g$ fixing $A'$ lies in the Weyl group of $\Phi_\mf h$.

	To this end recall the Lie-group-theoretic definition of the Weyl group is
	\begin{equation*}
		W_\mf h = N_H(T) \bs T \sse N_G(T) \bs T = W_\mf g \, ,
	\end{equation*}
	using the normalisers $N_H(T) \sse N_G(T)$ of the given maximal torus, where $H \sse G$ is the (connected, reductive) Lie subgroup integrating $\mf h \sse \mf g$.
	But the centraliser of $A' \in \mf t$ is determined by the root hyperplanes to which $A'$ belongs, and so it coincides with $\mf h$.
	Thus an element $w \in W_\mf g$ such that $w(A') = A'$ corresponds to an element $g \in N_G(T)$---defined up to the $T$-action---such that $\Ad_g(A') = A'$: this means
	\begin{equation*}
		g \in N_G(T) \cap H = N_H(T) \, ,
	\end{equation*}
	whence $w \in W_\mf h$.
\end{proof}

Finally we have an identification $\mc O_A \simeq W_{\mf g \mid \mf h} A$, introducing the quotient group
\begin{equation}
	\label{eq:weyl_group_fission}
	W_{\mf g \mid \mf h} \ceqq \Stab_{W_\mf g}(U) \bs W_\mf h \, .
\end{equation}
Denote then by $\bm B_{\ol A}$ the topological quotient (of $\bm B_A$) of Def.~\ref{def:local_WMCG}, in the present case where $Q = Az^{-1}$.

\begin{proposition}
	\label{prop:nonpure_WMCG_factor}

	The fundamental group $\pi_1(\bm B_{\ol A},\mc O_A)$ is an extension of~\eqref{eq:weyl_group_fission} by $\pi_1(\bm B_A,A)$.
\end{proposition}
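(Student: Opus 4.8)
The plan is to exhibit the projection $\bm B \to \ol{\bm B}$ as a regular (Galois) covering whose deck group is exactly $W_{\mf g \mid \mf h}$, and then to read off the extension from the associated short exact sequence of fundamental groups. By the homeomorphism~\eqref{eq:unordered_configuration_space_factor} it suffices to show that the finite group $W_{\mf g \mid \mf h}$ acts on $\bm B$ freely and properly discontinuously, with $\ol{\bm B}$ as its orbit space. First I would check that the quotient group~\eqref{eq:weyl_group_fission} genuinely acts: by Lem.~\ref{lem:setwise_weyl_stabiliser} the group $\Stab_{W_\mf g}(U)$ preserves $\bm B \sse U$, and its normal subgroup $W_\mf h$ acts \emph{trivially} on $U = \Ker(\Phi_\mf h)$, since each generating reflection $\sigma_\alpha$ with $\alpha \in \Phi_\mf h$ fixes $\Ker(\alpha) \supseteq U$ pointwise (recall $\Ker(\alpha) = \Ker(\sigma_\alpha - 1)$). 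Hence the $\Stab_{W_\mf g}(U)$-action descends to $W_{\mf g \mid \mf h} = \Stab_{W_\mf g}(U) \big\slash W_\mf h$, and its orbits on $\bm B$ are precisely the fibres of $\bm B \to \ol{\bm B}$.

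The main point is the freeness of this action, and here the key observation is that \emph{every} point of $\bm B$ has the same centraliser. Indeed, by~\eqref{eq:true_complement} a point $A' \in \bm B$ satisfies $\alpha(A') = 0$ if and only if $\alpha \in \Phi_\mf h$, whence $\mf Z_\mf g(A') = \mf h$ for all $A' \in \bm B$. Lemma~\ref{lem:parabolic_weyl_group} then applies verbatim to each such $A'$, giving $(W_\mf g)_{A'} = W_\mf h$; intersecting with $\Stab_{W_\mf g}(U) \supseteq W_\mf h$ shows that the stabiliser of $A'$ inside $\Stab_{W_\mf g}(U)$ is again $W_\mf h$, which becomes trivial in the quotient $W_{\mf g \mid \mf h}$. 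Thus $W_{\mf g \mid \mf h}$ acts on $\bm B$ with trivial stabilisers, i.e. freely.

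It remains to upgrade this to a covering. Since $W_{\mf g \mid \mf h}$ is finite, being a subquotient of the finite Weyl group $W_\mf g$, and $\bm B \sse \mf t$ is Hausdorff, a free action of a finite group is automatically properly discontinuous, so $\bm B \to \bm B \big\slash W_{\mf g \mid \mf h} = \ol{\bm B}$ is a regular covering with $\Gal(\bm B,\ol{\bm B}) = W_{\mf g \mid \mf h}$. Moreover $\bm B$ is path-connected, being the complement in the linear subspace $U$ of the finitely many complex hyperplanes $\Ker(\alpha) \cap U$ with $\alpha \in \Phi_\mf g \sm \Phi_\mf h$, each of real codimension two; hence $\ol{\bm B}$ is connected too. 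Applying the standard exact sequence of a connected Galois covering at the base point $A \in \bm B$ and its image $\ol A \in \ol{\bm B}$ then yields $1 \to \pi_1(\bm B,A) \to \pi_1(\ol{\bm B},\ol A) \to W_{\mf g \mid \mf h} \to 1$, which is exactly the asserted extension.

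I expect the only genuine obstacle to lie in the freeness of the second paragraph: the rest is soft topology once the acting group is known to be finite. The subtlety there is not the stabiliser computation at $A$ itself—this is Lemma~\ref{lem:parabolic_weyl_group}—but the fact that it holds \emph{uniformly} across $\bm B$, which hinges on the defining property~\eqref{eq:true_complement} that the centraliser $\mf h$ is constant along the whole deformation space.
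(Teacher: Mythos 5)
Your proposal is correct and follows essentially the same route as the paper: exhibit $\bm B \to \ol{\bm B}$ as a Galois covering for the free, properly discontinuous action of the finite group $W_{\mf g \mid \mf h}$, and extract the extension from the resulting exact sequence of a (principal) fibre bundle with discrete fibre. If anything, your freeness argument is slightly more explicit than the paper's---you observe that \emph{every} point of $\bm B$ has centraliser exactly $\mf h$ by~\eqref{eq:true_complement}, so Lem.~\ref{lem:parabolic_weyl_group} gives stabiliser $W_\mf h$ uniformly, whereas the paper appeals to conjugacy of stabilisers together with triviality at the base point---but the content is the same.
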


\begin{proof}
	The above discussion yields a homeomorphisms
	\begin{equation}
		\label{eq:unordered_configuration_space_factor}
		\bm B_{\ol A} \simeq \bm B_A \bs W_{\mf g \mid \mf h} \, ,
	\end{equation}
	by which we identify $\mc O_A$ as an element of $\bm B_{\ol A}$.

	Now by construction the $W_{\mf g \mid \mf h}$-action on $\bm B_A$ is free, in view of Lem.~\ref{lem:parabolic_weyl_group}.
	Moreover the action is automatically properly discontinuous ($W_{\mf g \mid \mf h}$ is finite), and the spaces involved are Hausdorff; hence every point of $\bm B_A$ has a neighbourhood $O \sse \bm B_A$ such that $\ol w_1(O) \cap \ol w_2(O) \neq \varnothing$ implies $\ol w_1 = \ol w_2 \in W_{\mf g \mid \mf h}$.

	It follows that the canonical projection $p \cl \bm B_A \to \bm B_{\ol A}$ is a Galois covering, with automorphisms provided by the monodromy action of $W_{\mf g \mid \mf h}$.
	The choice of the base point $A \in p^{-1}(\mc O_A) \sse \bm B_A$ in the fibre yields an identification $p^{-1}(\mc O_A) \simeq W_{\mf g \mid \mf h}$ between the torsor and the group, so there is a (principle) fibre bundle
	\begin{equation*}
		W_{\mf g \mid \mf h} \lhra \bm B_A \lxra{p} \bm B_{\ol A} \, .
	\end{equation*}
	Then the resulting exact sequence of homotopy groups contains the short sequence
	\begin{equation}
		\label{eq:exact_sequence_full_WMCG}
		1 \lra \pi_1(\bm B_A,A) \lxra{\pi_1(p)} \pi_1(\bm B_{\ol A},\ol A) \lra W_{\mf g \mid \mf h} \lra 1 \, ,
	\end{equation}
	identifying $\pi_0(W_{\mf g \mid \mf h}) \simeq W_{\mf g \mid \mf h}$ (the connecting map is then the monodromy action at the base point).
\end{proof}

\begin{example}
	For example if $A \in \mf t_{\reg}$ then $W_\mf h$ is trivial, so $W_{\mf g \mid \mf h} = W_\mf g$.
	Hence~\eqref{eq:unordered_configuration_space_factor} generalises the generic `configuration' space $\mf t_{\reg} \bs W_\mf g$, and in turn $\pi_1(\bm B_{\ol A},\mc O_A)$ generalises the (full/nonpure) $\mf g$-braid group.
\end{example}

\begin{remark}[Reduced reflection groups]
	At first one may think~\eqref{eq:weyl_group_fission} is the reflection group of the `restricted' hyperplane arrangement
	\begin{equation}
		\label{eq:restricted_arrangement}
		\mc H = \Set{ \Ker(\alpha) \cap U = \Ker \bigl( \eval[1]{\alpha}_U \bigr) | \alpha \in \Phi_\mf g \sm \Phi_\mf h } \sse \mb P \bigl( U^{\dual} \bigr) \, :
	\end{equation}
	however this is \emph{not} the case in general.

	For example if $\Phi_\mf h = A_1 \sse A_2 = \Phi_\mf g$ then~\eqref{eq:weyl_group_fission} is trivial (see \S~\ref{sec:nonpure_low_rank_examples}), while the reduced arrangement is of type $A_1$---so has a Weyl group of order 2.
	The point is there are reflections of~\eqref{eq:restricted_arrangement} which do not come as restrictions of elements in $\Stab_{W_\mf g}(U)$ (see \S~\ref{sec:nonpure_type_A_example}, and the more recent~\cite{doucout_rembado_yamakawa_twisted_g_local_wild_mapping_class_groups}).
\end{remark}

\subsection{General case}
\label{sec:weyl_fission_general}

The results of the previous section can be used inductively to deal with the general case, i.e. with an arbitrary irregular type. Indeed, recall from \cite{doucot_rembado_tamiozzo_2022_local_wild_mapping_class_groups_and_cabled_braids} the usage of \emph{fission} (which is an in the title of~\cite{boalch_2009_through_the_analytic_halo_fission_via_irregular_singularities,boalch_2014_geometry_and_braiding_of_stokes_data_fission_and_wild_character_varieties}): this associates to any irregular type $Q = \sum_{i = 1}^p A_i z^{-i}$ an increasing sequence of (Levi) root subsystems
\begin{equation}
	\label{eq:root_filtration}
	\Phi_{\mf h_1} \sse \dm \sse \Phi_{\mf h_{p+1}} \ceqq \Phi_\mf g \, ,
\end{equation}
as follows.

We consider the sequence of nested centralisers of the coefficients of $Q$, starting from the leading one.
Hence
\begin{equation}
	H_p = \Set{ g \in G | \Ad_g(A_p) = A_p } \sse G \, ,
\end{equation}
which is a connected reductive Lie subgroup.
Then further
\begin{equation}
	H_{p-1} = \Set{ g \in H_p | \Ad_g(A_{p-1}) = A_{p-1} } \sse H_p \, ,
\end{equation}
which coincides with the set of elements of $G$ stabilising both $A_{p-1}$ and $A_p$.
Thus in general $H_i = \Stab_G(A_i,\dc,A_p)$, for $i \in \set{1,\dc,p}$, and at the bottom we find the subgroup $H_1 \sse G$ stabilising the whole of $Q$---which appears in the quasi-Hamiltonian quotients of~\cite{boalch_2014_geometry_and_braiding_of_stokes_data_fission_and_wild_character_varieties}.
By construction $T \sse H_1$, and the inclusion may be proper.
On the opposite end, let us set $H_{p+1} \ceqq G$ for the sake of uniform notation.

\begin{remark}[Fission = breaking]
	This means the structure group of a principal $G$-bundle is `broken', from $G$ down to a reductive subgroup, at any marked point where a connection has an irregular singularity.
\end{remark}

Now set $\mf h_i = \Lie(H_i)$, i.e.
\begin{equation}
	\mf h_i = \Set{ X \in \mf g | \ad_X(A_i) = \dm = \ad_X(A_p) = 0 } \sse \mf g \, ,
\end{equation}
which is a reductive Lie subalgebra.
In particular $\mf h_1$ is the centraliser of $Q$, and the previous section corresponds to the case where $p = 1$---with $\mf h = \mf h_1$.
Finally, since $\mf t \sse \mf h_i$ (and it is a Cartan subalgebra there), we can consider the corresponding root system $\Phi_{\mf h_i} = \Phi(\mf h_i,\mf t) \sse \Phi_{\mf g}$, leading to~\eqref{eq:root_filtration}.

Now there is a corresponding filtration of Weyl (sub)groups
\begin{equation*}
	W_{\mf h_1} \sse \dm \sse W_{\mf h_{p+1}} = W_\mf g \, ,
\end{equation*}
and~\cite{doucot_rembado_tamiozzo_2022_local_wild_mapping_class_groups_and_cabled_braids} establishes that the deformation space $\bm B_Q$ is a product, with each factor $\bm B_{A_i} \sse \mf t$ determined as in~\eqref{eq:true_complement} by the fission $\Phi_{\mf h_i} \sse \Phi_{\mf h_{i+1}}$ (it is the space of admissible deformations of $A_i \in \mf t$).
Then $W_\mf g$ acts diagonally on $\bm B_Q \sse \mf t^p$.

Now $w (Q) \in \bm B_Q$ means that $w(A_i) \in \bm B_{A_i}$ for $i \in \set{ 1,\dc,p }$, and this condition can be described recursively using Lem.~\ref{lem:setwise_weyl_stabiliser}.
To this end define a sequence of subgroups
\begin{equation*}
	W_1 \sse \dm \sse W_p \sse W_{\mf g} \, ,
\end{equation*}
as follows.
Set as above $U_i \ceqq \Ker(\Phi_{\mf h_i})$, and then
\begin{equation}
	\label{eq:recursive_stabiliser}
	W_p \ceqq \Stab_{W_\mf g}(U_p) \, , \qquad W_{i-1} \ceqq \Stab_{W_i} (U_{i-1}) \sse W_i \, , \qquad  i \in \set{2,\dc,p} \, .
\end{equation}
Denote then $\mc O_Q$ the orbit of the irregular type under the action of the smallest group $W_1 \sse W_{\mf g}$.

\begin{lemma}
	One has $W_{i-1} = \Stab_{W_i}(\bm B_{A_i})$ for $i \in \set{1,\dc,p}$, and $(W_{\mf g} Q) \cap \bm B_Q = \mc O_Q$.
\end{lemma}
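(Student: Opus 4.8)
The plan is to reduce both assertions to the single-coefficient Lemma~\ref{lem:setwise_weyl_stabiliser}, exploiting the product structure $\bm B = \prod_{i = 1}^p \bm B_i$ and the diagonal Weyl action on $\mf t^p$. Since $W_\mf g$ acts diagonally, the condition $w(Q) \in \bm B$ is equivalent to the conjunction of the per-factor conditions $w(A_i) \in \bm B_i$ for $i \in \set{1,\dc,p}$, so throughout one may work a single fission layer at a time.

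First I would establish the stabiliser identity. By~\eqref{eq:true_complement} each factor $\bm B_i$ is carved out of $U_i = \Ker(\Phi_{\mf h_i})$ by removing exactly the hyperplanes of a single fission layer $\Phi_{\mf h_{i+1}} \sm \Phi_{\mf h_i}$; hence, for a Weyl element that already preserves all the \emph{higher} subsystems of~\eqref{eq:root_filtration}, stabilising this factor is equivalent to preserving the subsystem immediately below it, i.e. to preserving the corresponding $U$. This is precisely the defining condition of the next group in~\eqref{eq:recursive_stabiliser}, so the identity follows by running the permutation-of-root-hyperplanes argument of Lemma~\ref{lem:setwise_weyl_stabiliser} verbatim, now relative to two consecutive subsystems of~\eqref{eq:root_filtration}. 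The only extra input needed is that the groups in~\eqref{eq:recursive_stabiliser} are nested and that each $W_j$, fixing $U_j$ setwise, automatically permutes the roots vanishing on $U_j$, namely $\Phi_{\mf h_j}$.

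For the orbit identity I would argue by \emph{descent} along~\eqref{eq:root_filtration}, from the top coefficient downwards, proving that $w(Q) \in \bm B$ forces $w \in W_1$. At the top layer the complement $\Phi_\mf g \sm \Phi_{\mf h_p}$ coincides with $\Set{ \alpha | d_\alpha = p }$, so $\bm B_p$ is literally of the form~\eqref{eq:true_complement} for the single element $A_p$; thus $w(A_p) \in \bm B_p$ already yields $w \in \Stab_{W_\mf g}(U_p) = W_p$ by Lemma~\ref{lem:setwise_weyl_stabiliser}. Inductively, assume $w \in W_i$, so that $w$ preserves $\Phi_{\mf h_i}$; then for every $\alpha \in \Phi_{\mf h_{i-1}} \sse \Phi_{\mf h_i}$ one has $w^{-1}\alpha \in \Phi_{\mf h_i}$, while the vanishing part of $w(A_{i-1}) \in \bm B_{i-1}$ gives $(w^{-1}\alpha)(A_{i-1}) = 0$. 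The key combinatorial fact is that a root $\beta \in \Phi_{\mf h_i}$ annihilates $A_{i-1}$ precisely when $d_\beta \leq i-2$, i.e. precisely when $\beta \in \Phi_{\mf h_{i-1}}$; that is, $\Set{ \beta \in \Phi_{\mf h_i} | \beta(A_{i-1}) = 0 } = \Phi_{\mf h_{i-1}}$. Hence $w^{-1}\Phi_{\mf h_{i-1}} \sse \Phi_{\mf h_{i-1}}$, and by finiteness $w$ preserves $\Phi_{\mf h_{i-1}}$, whence $w \in \Stab_{W_i}(U_{i-1}) = W_{i-1}$. Iterating down to $i = 1$ gives $w \in W_1$, so $w(Q) \in W_1 Q = \mc O_Q$. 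The reverse inclusion is immediate: $W_1 \sse W_\mf g$ stabilises each $\bm B_i$ by the first assertion, hence stabilises $\bm B$, and $Q \in \bm B$, so $\mc O_Q \sse (W_\mf g Q) \cap \bm B$.

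I expect the main obstacle to be the bookkeeping of this descent rather than any single hard estimate. One must check carefully that at each level the deduced membership genuinely improves from $W_i$ to $W_{i-1}$, which rests on the nesting of the groups~\eqref{eq:recursive_stabiliser} together with the identification of the centraliser-in-$\Phi_{\mf h_i}$ of $A_{i-1}$ with $\Phi_{\mf h_{i-1}}$ displayed above—this is exactly where the fission structure of~\eqref{eq:root_filtration} enters. It is also worth recording that only the kernel conditions of each $\bm B_{i-1}$ are used to pin $w$ down, the open conditions $\mf t \sm \Ker(\alpha)$ being then automatically satisfied once $w$ is known to preserve the layer $\Phi_{\mf h_i} \sm \Phi_{\mf h_{i-1}}$.
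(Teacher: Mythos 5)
Your proposal is correct and follows essentially the same route as the paper: both arguments descend along the fission filtration from the top coefficient, applying the single-coefficient Lemma~\ref{lem:setwise_weyl_stabiliser} relativised to consecutive pairs $(\mf h_{i-1},\mf h_i)$ at each step, using that an element of $W_i$ already permutes $\Phi_{\mf h_i}$ so that the kernel conditions on $w(A_{i-1})$ force $w$ to preserve $\Phi_{\mf h_{i-1}}$. Your explicit identification $\Set{\beta \in \Phi_{\mf h_i} \mid \beta(A_{i-1}) = 0} = \Phi_{\mf h_{i-1}}$ just spells out the step the paper compresses into ``reasoning as in the proof of Lem.~\ref{lem:setwise_weyl_stabiliser}''.
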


\begin{proof}
	First $w(A_p) \in \bm B_{A_p}$ if and only if $w \in W_p$, and the first statement has been proven in Lem.~\ref{lem:setwise_weyl_stabiliser}---for $i = p$.

	Then we can replace $(\mf h_p,\mf g)$ with $(\mf h_{p-1},\mf h_p)$, and repeat the same construction: we need $w \in W_p$ such that $w(A_{p-1}) \in \bm B_{A_{p-1}}$, where
	\begin{equation*}
		\bm B_{A_{p-1}} = U_{p-1} \cap \bigcap_{\Phi_{\mf h_p} \sm \Phi_{\mf h_{p-1}}} \bigl( \mf t \sm \Ker(\alpha) \bigr) \sse \mf t \, ,
	\end{equation*}
	by~\eqref{eq:true_complement}.
	Reasoning as in the proof of Lem.~\ref{lem:setwise_weyl_stabiliser} this requires $w(\Phi_{\mf h_{p-1}}) \sse \Phi_{\mf h_{p-1}}$, which is equivalent to preserving the partition $\Phi_{\mf h_p} = \Phi_{\mf h_{p-1}} \cup ( \Phi_{\mf h_p} \sm \Phi_{\mf h_{p-1}})$, since by (recurrence) hypothesis $w(\Phi_{\mf h_p}) \sse \Phi_{\mf h_p}$.
	Hence $w \in W_{p-1}$ and
	\begin{equation*}
		W_{p-1} = \Stab_{W_p}(\bm B_{A_{p-1}}) \sse W_p \, .
	\end{equation*}

	Descending until $i = 1$ shows that $w(Q) \in \bm B_Q$ if and only if $w \in \bigcap_i W_i = W_1$, and proves the first statement---inductively.
\end{proof}

Note $W_1 \sse W_\mf g$ is determined by the flag of kernels
\begin{equation}
	\label{eq:kernel_flag}
	\bm U = (\mf t \supseteq U_1 \supseteq \dm \supseteq U_p) \, ,
\end{equation}
by~\eqref{eq:recursive_stabiliser}.
Indeed consider the (parabolic) stabiliser of~\eqref{eq:kernel_flag}, within $W_\mf g$, i.e.
\begin{equation}
	\label{eq:setwise_weyl_stabiliser_general}
	\Stab_{W_\mf g}(\bm U) \ceqq \bigcap_i \Stab_{W_\mf g}(U_i) \sse W_\mf g \, ,
\end{equation}
which coincides with $W_\mf g \cap \Stab_{\GL(\mf t)} (\bm U)$; then:
\begin{lemma}
	\label{lem:recursive_stabiliser_equals_flag_stabiliser}

	One has $W_1 = \Stab_{W_\mf g}(\bm U)$.
\end{lemma}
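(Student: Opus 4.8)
\emph{The plan} is to unwind the recursive definition~\eqref{eq:recursive_stabiliser} and show it telescopes into the intersection~\eqref{eq:setwise_weyl_stabiliser_general}. The single elementary fact driving everything is that, for any subgroup $H \sse W_\mf g$ and any subspace $V \sse \mf t$, the setwise stabiliser computed inside $H$ is just the intersection with the ambient one:
\begin{equation*}
	\Stab_H(V) = H \cap \Stab_{W_\mf g}(V) \, .
\end{equation*}
This holds because $w \in H$ stabilises $V$ exactly when $w \in H$ and $w(V) = V$; note there is no ambiguity between setwise and ``containment'' stabilisation here, since each $w \in W_\mf g \sse \GL(\mf t)$ is invertible and $V$ is finite-dimensional, so $w(V) \sse V$ already forces $w(V) = V$.

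Granting this, I would prove by downward induction on $i \in \set{1,\dc,p}$ the identity
\begin{equation*}
	W_i = \bigcap_{j = i}^p \Stab_{W_\mf g}(U_j) \, .
\end{equation*}
The base case $i = p$ is the first half of~\eqref{eq:recursive_stabiliser}, i.e. $W_p = \Stab_{W_\mf g}(U_p)$ by definition. For the inductive step, suppose the identity holds at level $i$; then the defining relation $W_{i-1} = \Stab_{W_i}(U_{i-1})$, together with the displayed fact applied to $H = W_i$ and $V = U_{i-1}$, gives
\begin{equation*}
	W_{i-1} = W_i \cap \Stab_{W_\mf g}(U_{i-1}) = \Stab_{W_\mf g}(U_{i-1}) \cap \bigcap_{j = i}^p \Stab_{W_\mf g}(U_j) = \bigcap_{j = i-1}^p \Stab_{W_\mf g}(U_j) \, ,
\end{equation*}
closing the induction.

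Evaluating the identity at $i = 1$ then yields $W_1 = \bigcap_{j = 1}^p \Stab_{W_\mf g}(U_j)$, which is precisely $\Stab_{W_\mf g}(\bm U)$ by~\eqref{eq:setwise_weyl_stabiliser_general}. There is no serious obstacle here: the statement is a purely formal consequence of the recursive construction, and the only point genuinely worth spelling out is the compatibility of ``stabiliser within $W_i$'' with ``intersection against $\Stab_{W_\mf g}$'', which is what makes the nested stabilisers collapse into a single \emph{unordered} intersection over the whole flag. The geometric content---that preserving the flag~\eqref{eq:kernel_flag} is the same as simultaneously preserving each of its members---becomes automatic once the fact above is in place; if anything, the subtle step is merely bookkeeping the index ranges so that the telescoping is exact.
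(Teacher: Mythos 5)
Your proof is correct and follows essentially the same route as the paper's: a downward induction on $i$ showing $W_i = \bigcap_{j = i}^p \Stab_{W_\mf g}(U_j)$, with the tautological base case $i = p$ and the telescoping step via $\Stab_{W_i}(U_{i-1}) = W_i \cap \Stab_{W_\mf g}(U_{i-1})$. The only addition is your explicit remark that $w(V) \sse V$ forces equality for invertible linear maps on finite-dimensional spaces, which the paper leaves implicit.
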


\begin{proof}
	Postponed to~\ref{proof:lem_recursive_stabiliser_equals_flag_stabiliser}.
\end{proof}

Thus the restriction of orbits to $\bm B_Q$ is controlled by the action of the setwise Weyl-stabiliser of the kernel flag~\eqref{eq:kernel_flag}, generalising the inductive step.

\begin{remark}
	Beware however it is \emph{not} true in general that $\Stab_{W_\mf g}(U_i) = \Stab_{W_\mf g}(\bm B_{A_i})$: e.g. for the fission $\varnothing = \Phi_{\mf h_1} \sse \Phi_{\mf h_2} \sse \Phi_{\mf g}$ one has $U_1 = \mf t$, so $\Stab_{W_\mf g}(U_1) = W_{\mf g}$; but
	\begin{equation*}
		\bm B_{A_1} = \mf t \,\mathbin{\big\backslash} \, \bigcup_{\Phi_{\mf h_2}} \Ker(\alpha) \sse \mf t \, ,
	\end{equation*}
	which is not stabilised by $W_{\mf g}$ if $\mf h_2 \sse \mf g$ is a proper Lie subalgebra.
\end{remark}

Analogously we can identify the subgroup fixing all the admissible deformations of the starting irregular type:

\begin{lemma}
	\label{lem:parabolic_weyl_group_general}

	One has $W_{\mf h_1} = (W_\mf g)_{U_1} = (W_\mf g)_{Q'}$,\fn{
		Note in the rightmost identity we consider two different actions of $W_\mf g$: the former is an action on $\mf t$, the latter on $\mf t \ots \ms{T}_{\Sigma,a}$.} for all $Q' \in \bm B_Q$.
\end{lemma}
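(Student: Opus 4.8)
The plan is to reduce everything to the single-coefficient Lemma~\ref{lem:parabolic_weyl_group}. The key observation is that the centraliser $\mf h_1 = \mf{Z}_\mf g\bigl(\set{A_1,\dc,A_p}\bigr)$ is itself the centraliser $\mf{Z}_\mf g(A)$ of a single \emph{generic} element $A$ in the $\mb C$-span of the coefficients. Indeed, for $\alpha \in \Phi_\mf g$ the scalar $\alpha(A) = \sum_i c_i\,\alpha(A_i)$ depends linearly on $A = \sum_i c_i A_i$: if $\alpha \in \Phi_{\mf h_1}$ then $\alpha(A_i) = 0$ for all $i$ and $\alpha(A) = 0$ identically, whereas if $\alpha \notin \Phi_{\mf h_1}$ the form $(c_i) \mapsto \sum_i c_i\,\alpha(A_i)$ is nonzero and hence vanishes only on a proper hyperplane of $\mb C^p$. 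Choosing $(c_i)$ outside the finite union of these hyperplanes (possible over the infinite field $\mb C$) yields an element $A \in \mf t$ with $\Phi_{\mf{Z}_\mf g(A)} = \Phi_{\mf h_1}$, so that $\mf{Z}_\mf g(A) = \mf h_1$ and $\Ker\bigl(\Phi_{\mf{Z}_\mf g(A)}\bigr) = U_1$.

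Granting this, the first equality $(W_\mf g)_{U_1} = W_{\mf h_1}$ is immediate: it is Lemma~\ref{lem:parabolic_weyl_group} for the single element $A$, which additionally supplies $(W_\mf g)_A = W_{\mf h_1}$. For the second equality I would match $(W_\mf g)_A$ with the stabiliser $(W_\mf g)_Q$ of the irregular type under the diagonal action $w(Q) = \sum_i w(A_i) z^{-i}$, so that $(W_\mf g)_Q = \Set{ w \in W_\mf g | w(A_i) = A_i \text{ for all } i }$. One inclusion is clear, since any $w$ fixing each $A_i$ fixes their combination $A$, giving $(W_\mf g)_Q \sse (W_\mf g)_A = W_{\mf h_1}$. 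For the converse I would argue on generators: $W_{\mf h_1}$ is generated by the reflections $\sigma_\alpha$ with $\alpha \in \Phi_{\mf h_1}$, and by the identity $\Ker(\alpha) = \Ker(\sigma_\alpha - 1)$ each such $\sigma_\alpha$ fixes $\Ker(\alpha)$ pointwise; since $\alpha(A_i) = 0$ places every $A_i$ in $\Ker(\alpha)$, we obtain $\sigma_\alpha(A_i) = A_i$ for all $i$, so each generator fixes $Q$ and therefore $W_{\mf h_1} \sse (W_\mf g)_Q$. The two inclusions then close the chain $(W_\mf g)_Q = W_{\mf h_1} = (W_\mf g)_{U_1}$.

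The step I expect to require the most care is the genericity reduction of the first paragraph: one must verify that the centraliser of the \emph{finite set} of coefficients is unchanged upon passing to a single generic linear combination, i.e. that the bad loci $\Set{ (c_i) | \sum_i c_i\,\alpha(A_i) = 0 }$ attached to the roots $\alpha \notin \Phi_{\mf h_1}$ are proper subspaces and finite in number, so that their complement is nonempty. Everything else is then formal. In particular, checking the converse inclusion $W_{\mf h_1} \sse (W_\mf g)_Q$ on reflection generators—rather than through the integrated subgroup—sidesteps any connectedness subtlety for $\mf{Z}_G(\set{A_1,\dc,A_p})$, and no input from the fission filtration~\eqref{eq:root_filtration} beyond the definition of $\mf h_1$ is required.
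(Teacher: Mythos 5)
Your proof is correct, but it takes a genuinely different route from the paper's. The paper proves $(W_{\mf g})_Q = W_{\mf h_1}$ by induction along the fission filtration, re-running the normaliser argument of Lem.~\ref{lem:parabolic_weyl_group} at each step to obtain $(W_{\mf g})_{\set{A_j,\dc,A_p}} = W_{\mf h_j}$ for every $j$, and then closes the circle of inclusions via $W_{\mf h_1} = \bigcap_i W_{\mf h_i} \sse \bigcap_i (W_{\mf g})_{U_i} = (W_{\mf g})_{U_1}$ and $(W_{\mf g})_{U_1} \sse (W_{\mf g})_{\bm B} \sse (W_{\mf g})_Q$, the last step using that each deformation space $\bm B_i$ sits inside $U_i \sse U_1$. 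You instead collapse the whole tuple of coefficients into a single semisimple element: the Zariski-genericity argument over $\mb C$ (finitely many proper kernels of the nonzero forms $c \mapsto \sum_i c_i \alpha(A_i)$, $\alpha \notin \Phi_{\mf h_1}$, cannot cover $\mb C^p$) produces $A = \sum_i c_i A_i$ with $\mf Z_{\mf g}(A) = \mf h_1$, after which Lem.~\ref{lem:parabolic_weyl_group} applies exactly once, as a black box, giving $W_{\mf h_1} = (W_{\mf g})_{U_1} = (W_{\mf g})_A$; the comparison with $(W_{\mf g})_Q$ is then handled by linearity for $(W_{\mf g})_Q \sse (W_{\mf g})_A$, and by checking the reflection generators $\sigma_{\alpha}$, $\alpha \in \Phi_{\mf h_1}$, for $W_{\mf h_1} \sse (W_{\mf g})_Q$. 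Both arguments are sound, and yours buys brevity and self-containedness: it needs no reference to the spaces $\bm B_i$ and no repetition of the Lie-group normaliser argument, and (as you note) the generator check sidesteps any question about the subgroup of $G$ integrating $\mf h_1$. What the paper's induction buys in exchange is the full list of intermediate identities $W_{\mf h_j} = (W_{\mf g})_{\set{A_j,\dc,A_p}}$ at every level, which runs parallel to the recursive treatment of the setwise stabilisers $W_i$ in the same section and keeps the two lemmata structurally uniform.
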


\begin{proof}
	Write $Q' = \sum_{i = 1}^p A'_i z^{-i}$, with coefficients $A'_1,\dc,A'_p \in \mf t$.
	By definition $w(Q') = Q'$ if and only if $w(A'_i) = A'_i$ for $i \in \Set{1,\dc,p}$, i.e. $w \in \bigcap_i (W_\mf g)_{A'_i}$.

	Now the argument of Lem.~\ref{lem:parabolic_weyl_group} yields the inductive step for the proof of the identity
	\begin{equation*}
		\bigcap_{j \leq i \leq p} (W_{\mf g})_{A'_i} = (W_{\mf g})_{\set{A'_j,\dotsc,A'_p}} = W_{\mf h_j} \sse W_\mf g \, , \qquad j \in \set{1,\dc,p} \, ,
	\end{equation*}
	whence
	\begin{equation*}
		(W_{\mf g})_{Q'} = W_{\mf h_1} \sse W_\mf g \, .
	\end{equation*}

	On the other hand
	\begin{equation*}
		W_{\mf h_1} = \bigcap_i W_{\mf h_i} \sse \bigcap_i (W_\mf g)_{U_i} = (W_{\mf g})_{U_1} \, ,
	\end{equation*}
	since $U_1 = \sum_i U_i \sse \mf t$, and $W_{\mf h_i}$ acts as the identity on $U_i = \Ker(\Phi_{\mf h_i})$.

	Finally if $w$ acts as the identity on $U_1$ then it also fixes (pointwise) $\bm B_{A_i} \sse U_1$ for $i \in \set{1,\dc,p}$, thus
	\begin{equation*}
		(W_{\mf g})_{U_1} \sse (W_\mf g)_{\bm B_Q} \sse (W_\mf g)_{Q'} \, ,
	\end{equation*}
	proving the remaining inclusion.
\end{proof}

In particular by Lemmata~\ref{lem:recursive_stabiliser_equals_flag_stabiliser} and~\ref{lem:parabolic_weyl_group_general} we also have an inclusion $W_{\mf h_i} \sse W_i$, since
\begin{equation*}
	W_{\mf h_i} = (W_\mf g)_{U_i} \sse \Stab_{W_\mf g}(U_j) \, , \qquad j \in \set{i,\dc,p} \, ,
\end{equation*}
as $U_i = \bigcup_{j \geq i} U_j \sse \mf t$.

It follows that $W_{\mf h_1}$ is a normal subgroup of $\Stab_{W_\mf g}(U_1)$, hence a fortiori of ~\eqref{eq:setwise_weyl_stabiliser_general}, and we consider again the quotient group
\begin{equation}
	\label{eq:weyl_group_fission_general}
	W_{\mf g \mid \bm{\mf h}} \ceqq \Stab_{W_\mf g}(\bm U) \bs W_{\mf h_1} \, .
\end{equation}
Note the numerator of~\eqref{eq:weyl_group_fission_general} depends on the whole sequence $\bm{\mf h} = (\mf h_1,\dc,\mf h_p)$, while the denominator only depends on the last term---the pointwise stabiliser of a flag/filtration only depend on its union/sum, contrary to the setwise stabiliser.

\begin{example}[Complete fission and generic case]
	In particular if the fission is `complete', which means that $H_1 = \Stab_G (Q) = T \sse G$ is the maximal torus, then $\Phi_{\mf h_1} = \varnothing$; in this case $U_1 = \mf t$ and $W_{\mf h_1}$ is trivial, so $W_{\mf g \mid \bm{\mf h}} \simeq \Stab_{W_\mf g}(\bm U)$.

	If further $A_p \in \mf t_{\reg}$, then $\bm U$ is stationary at $\mf t$, and $\Stab_{W_\mf g}(\bm U) = W_\mf g$.
\end{example}

Finally by construction there is a homeomorphism $\bm B_{\ol Q} \simeq \bm B_Q \bs W_{\mf g \mid \bm{\mf h}}$, and the same argument of the proof of Prop.~\ref{prop:nonpure_WMCG_factor} yields the following.

\begin{theorem}
	\label{thm:local_wmcg_extension}

	The projection $\bm B_Q \to \bm B_{\ol Q}$ is a Galois covering, and $\Gamma_{\ol Q}$ is an extension of $W_{\mf g \mid \bm{\mf h}}$ by $\Gamma_Q$.
\end{theorem}

Of course if $W_1 = W_{\mf h_1}$ then $\bm B_Q = \bm B_{\ol Q}$, in which case the WMCG is pure (and has been studied in~\cite{doucot_rembado_tamiozzo_2022_local_wild_mapping_class_groups_and_cabled_braids}).

\section{Low-rank examples}
\label{sec:nonpure_low_rank_examples}

Analogously to the pure case, we provide examples of WMCGs for low-rank Lie algebras, after proving we can reduce to the \emph{simple} case.

\subsection{Reduction to the simple case}

Suppose $\mf g = \bops_i \mf I_i$ is a decomposition of $\mf g$ into mutually commuting ideals.
Choose then a root subsystem $\Phi \sse \Phi_{\mf g}$.

Introduce $\mf t_i \ceqq \mf t \cap \mf I_i$ (a Cartan subalgebra of $\mf I_i$), and let $\Phi_{\mf I_i} = \Phi(\mf I_i,\mf t_i) \sse \Phi_{\mf g}$ be the associated root system; there are two other decompositions:
\begin{equation*}
	\mf t = \bops_i \mf t_i \, , \qquad \Phi_{\mf g} = \bops_i \Phi_{\mf I_i} \, .
\end{equation*}
Further let $\Phi^{(i)} \ceqq \Phi \cap \Phi_{\mf I_i}$, which is a root subsystem of $\Phi_{\mf I_i}$.

Then one can show~\cite{doucot_rembado_tamiozzo_2022_local_wild_mapping_class_groups_and_cabled_braids} that the deformation space~\eqref{eq:true_complement} splits as a product $\bm B_Q = \prod_i (\bm B_Q)_i$, where
\begin{equation}
	\label{eq:true_complement_component}
	(\bm B_Q)_i = \Ker \bigl( \Phi^{(i)} \bigr) \cap \bigcap_{\Phi_{\mf I_i} \sm \Phi^{(i)}} \bigl( \mf t_i \sm \Ker(\alpha) \bigr) \sse \mf t_i \, .
\end{equation}

Finally the Weyl group also splits as a product $W_{\mf g} = \prod_i W_{\mf I_i} \sse \prod_i \GL(\mf t_i)$, where the $i$-th factor (the Weyl group of $\Phi_{\mf I_i}$) acts trivially on the complementary direct summands~\cite[Ch.~6, \S~1.2]{bourbaki_1968_elements_de_mathematiques_fascicule_xxxvii_chapitres_iv_v_vi}.
It follows that every $W_{\mf g}$-orbit (inside $\mf t$) splits as a product of $W_{\mf I_i}$-orbits (inside $\mf t_i$), so the previous discussion of setwise/pointwise stabilisers can be carried over factorwise, and:

\begin{corollary}
	The deformation space $\bm B_{\ol Q}$ decomposes as a (topological) product $\prod_i (\bm B_{\ol Q})_i$, where $(\bm B_{\ol Q})_i$ is the topological quotient of~\eqref{eq:true_complement_component} with respect to the equivalence relation~\eqref{eq:equivalence_relation}---with $W_{\mf I_i}$ replacing $W_{\mf g}$.
\end{corollary}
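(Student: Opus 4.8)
The plan is to deduce the splitting from the factorwise decomposition of the Weyl action, reducing the statement to the elementary fact that a topological quotient of a product by a product of \emph{open} maps is again a product. First I would record the consequence of the orbit splitting noted above: since $W_{\mf g} = \prod_i W_{\mf I_i}$ acts on $\bm B = \prod_i \bm B_i \sse \bigoplus_i \mf t_i = \mf t$ through the product action---the $i$-th factor acting on $\mf t_i$ only---the orbit through $A = (A_i)_i$ factors as $W_{\mf g} A = \prod_i \bigl( W_{\mf I_i} A_i \bigr)$. Hence the relation~\eqref{eq:equivalence_relation} on $\bm B$ is itself a product: one has $A \sim A'$ if and only if $A_i \sim_i A'_i$ for every $i$, writing $\sim_i$ for~\eqref{eq:equivalence_relation} on $\bm B_i$ with $W_{\mf I_i}$ in place of $W_{\mf g}$. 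This already yields the set-theoretic identification $\ol{\bm B} = \prod_i \ol{\bm B}_i$, with $\ol{\bm B}_i = \bm B_i \big\slash \!\!\sim_i$.

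To upgrade this to a homeomorphism I would invoke the results of \S~\ref{sec:weyl_fission}. Carrying the setwise and pointwise stabilisers over factorwise gives $\Stab_{W_\mf g}(\bm B) = \prod_i \Stab_{W_{\mf I_i}}(\bm B_i)$ and $W_\mf h = \prod_i W_{\mf h_i}$, so the subquotient~\eqref{eq:weyl_group_fission} splits as a product $W_{\mf g \mid \mf h} \simeq \prod_i W_{\mf I_i \mid \mf h_i}$ of finite groups, the $i$-th acting freely on $\bm B_i$ and trivially on the remaining factors. By~\eqref{eq:unordered_configuration_space_factor}, applied to $\mf g$ and to each $\mf I_i$ separately, we then have $\ol{\bm B} \simeq \bm B \big\slash W_{\mf g \mid \mf h}$ and $\ol{\bm B}_i \simeq \bm B_i \big\slash W_{\mf I_i \mid \mf h_i}$; in particular $\ol{\bm B}$ is the quotient of $\prod_i \bm B_i$ by the product action of $\prod_i W_{\mf I_i \mid \mf h_i}$.

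The only genuine obstacle is the compatibility of this quotient with the product topology, as a product of quotient maps need not be a quotient map. Here, however, each projection $q_i \colon \bm B_i \to \ol{\bm B}_i$ is the quotient by the finite group $W_{\mf I_i \mid \mf h_i}$, whose elements preserve $\bm B_i$ by Lem.~\ref{lem:setwise_weyl_stabiliser} and thus act as homeomorphisms; consequently $q_i$ is an \emph{open} map, since the saturation $q_i^{-1} \bigl( q_i(O) \bigr) = \bigcup_w w(O)$ of an open set $O \sse \bm B_i$ is a finite union of open sets. A finite product of open continuous surjections is once more an open continuous surjection, hence a quotient map, so $\prod_i q_i \colon \prod_i \bm B_i \to \prod_i \ol{\bm B}_i$ is a quotient map whose fibres are exactly the $\sim$-classes described above. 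By the universal property of the quotient topology the product topology on $\prod_i \ol{\bm B}_i$ then agrees with the quotient topology on $\ol{\bm B}$, giving the desired homeomorphism $\ol{\bm B} \simeq \prod_i \ol{\bm B}_i$.
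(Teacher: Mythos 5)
Your proposal is correct and follows essentially the same route as the paper: the factorwise splitting of the Weyl group, of its orbits, and of the setwise/pointwise stabilisers, which the paper carries over from the preceding discussion to obtain the product decomposition of $\ol{\bm B}$. The one point you treat more carefully than the text is the compatibility of the quotient with the product topology---the observation that each $q_i$ is an open map (being a quotient by a finite group acting by homeomorphisms), so the product of the $q_i$ is again a quotient map---a detail the paper leaves implicit but which your argument settles correctly.
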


In particular the factor corresponding to the centre $\mf Z_{\mf g} \sse \mf g$ is contractible, and can be removed; and further if $\mf g$ is semisimple then the WMCG is a direct product of the groups associated with its simple ideals.

\begin{center}
	\textbf{Hereafter we thus assume that $\mf g$ is simple!}
\end{center}

\subsection{Rank one}

If $\rk(\mf g) = 1$ then the only possible nontrivial fission $\Phi_\mf h \sse \Phi_\mf g$ is the `generic' one $\varnothing \sse \Phi_\mf g$, so $\Gamma_{\ol Q}$ is either trivial or isomorphic to the $\mf g$-braid group.
This is of type $A_1$, i.e. the braid group on 2-strands, and~\eqref{eq:exact_sequence_full_WMCG} becomes
\begin{equation*}
	1 \lra \mb Z \lra \mb Z \lra \mb Z \bs 2\mb Z \lra 1 \, ,
\end{equation*}
with $\Gamma_Q \simeq \mb Z \simeq \Gamma_{\ol Q}$, matching up with a particular case of~\eqref{eq:exact_sequence_braid_group}:
\begin{equation*}
	1 \lra \PBr_2 \lra \Br_2 \lra \on S_2 \lra 1 \, .
\end{equation*}

Equivalently up to homotopy we have $\bm B_Q \simeq \mb S^1$, and the arrow $\bm B_Q \to \bm B_{\ol Q}$ is the 2-sheeted covering of the circle over itself.

\subsection{Rank 2}

Suppose now $\rk(\mf g) = 2$: since $\mf g$ is simple then $\Phi_\mf g$ is isomorphic to $A_2$, $B_2/C_2$ or $G_2$, and $W_\mf g$ is isomorphic to $\Dih_3 \simeq \on S_3$, $\Dih_4$ or $\Dih_6$, respectively (i.e. the symmetries of a triangle, a square, or a hexagon).
Here $\Dih_n$ denotes the dihedral group of order $2n$, for an integer $n \geq 1$---i.e. we use the `geometric' convention rather than the `algebraic' one.

Let $\Delta_{\mf g} \sse \Phi_{\mf g}$ be a base of simple roots.
The generic fission is $\varnothing \sse \Phi_\mf g$, in which case we obtain the $\mf g$-braid group, while the nongeneric (incomplete) fission is $\Phi_\mf h \sse \Phi_\mf g$, with $\Phi_\mf h = \set{\pm \theta}$ for some $\theta \in \Delta_\mf g$; this corresponds to the deformation space $\bm B_Q = \mb C \sm \set{0}$.
With the usual notation we find $U = \Ker(\theta)$ and $W_\mf h \simeq \mb Z \bs 2\mb Z$, and we must describe $\Stab_{W_\mf g}(U) \sse W_\mf g$---acting on $\bm B_Q$.
This is the same as the setwise stabiliser of the line $\mb C \theta \sse \mf t^{\dual}$ for the dual action, and the difference among the three types is due to the parity of the corresponding dihedral group.

Namely for type $A$ the Weyl group yields the standard permutation action of $\on S_3$ on $\mb C^3 \supseteq \mf t^{\dual}$---identified with the standard dual Cartan subalgebra for $\mf{gl}_3(\mb C) \supseteq \mf{sl}_3(\mb C)$.
Then the only nontrivial permutation fixing the line generated by either simple root is the associated (simple) reflection.
It follows that $\Stab_{W_\mf g}(U) = W_\mf h$, so~\eqref{eq:weyl_group_fission} is trivial and the WMCG is pure: it is thus infinite cyclic.

For type $B$ the long roots are vertices of a square centered at the origin of $\mf t_{\mb R}^{\dual} \simeq \mb R^2$, while the short roots are vertices of a smaller square obtained by taking midpoints of each side:
\begin{center}
	\begin{tikzpicture}
		\foreach \th in {0,90,180,-90}
		{
		\draw[->]  (0,0) to (\th:1);
		\draw[dotted] (\th:1) -- ({\th+90}:1);
		\draw[->] (0,0) to ({\th+45}:{sqrt(2)});
		\draw[dotted] ({\th-45}:{sqrt(2)}) -- ({\th+45}:{sqrt(2)});
		}
	\end{tikzpicture}
\end{center}

The Weyl group acts by preserving both squares, and operates as the group of their symmetries.
In both cases a diagonal is fixed by the subgroup generated by the (simple) reflection along the corresponding axis, but also by a rotation of $\pi$.
This means the stabiliser is always the Klein four-group $K_4 \simeq \mb Z \bs 2\mb Z \times \mb Z \bs 2\mb Z$, hence~\eqref{eq:weyl_group_fission} becomes
\begin{equation*}
	W_{\mf g \mid \mf h} \simeq \mb Z \bs 2\mb Z \, ,
\end{equation*}
acting as the antipode on the punctured plane, and $\bm B_Q \to \bm B_{\ol Q}$ is again a two-sheeted covering of the circle onto itself (up to homotopy equivalence).
In particular $\Gamma_{\ol Q}$ is infinite cyclic.

Finally type $G$ yields an analogous situation.
Long/short roots assemble into two Weyl-invariant hexagons in the real plane, and the action of the Klein group (within $\Dih_6$) fixes any given diagonal within each hexagon:
\begin{center}
	\begin{tikzpicture}
		\foreach \th in {0,60,120,180,-120,-60}
		{
		\draw[->]  (0,0) to (\th:1);
		\draw[dotted] (\th:1) -- ({\th+60}:1);
		\draw[->] (0,0) to ({\th+30}:{sqrt(3)});
		\draw[dotted] ({\th+30}:{sqrt(3)}) -- ({\th+90}:{sqrt(3)});
		}
	\end{tikzpicture}
\end{center}

Then we can extend to the complete (nongeneric) fission $\varnothing = \Phi_\mf t \sse \Phi_\mf h \sse \Phi_\mf g$, with the middle term as above.
The associated kernel flag is $\bm U = \bigl( \mf t \supseteq \mf t \supseteq \Ker(\theta) \bigr)$, so the setwise stabiliser stays the same; but this time the irregular type is centralised by the maximal torus only, so $W_{\mf g \mid \bm{\mf h}}$ will either have order 2 (for type $A$), or be isomorphic to the Klein group (for type $B/C$ and $G$).
The result is a covering
\begin{equation*}
	\mb C^{\ast} \times \mb C^{\ast} \simeq \bm B_Q \lra \bm B_{\ol Q} \, , \qquad \mb C^{\ast} = \mb C \sm \set{0} \, ,
\end{equation*}
with either 2 or 4 sheets, and $\Gamma_{\ol Q}$ is an extension of the monodromy group by $\Gamma_Q \simeq \mb Z^2$.

\section{Type A}
\label{sec:nonpure_type_A_example}

In this section we will explicitly describe WMCGs for the special/general linear Lie algebras, in full generality, building on~\cite{doucot_rembado_tamiozzo_2022_local_wild_mapping_class_groups_and_cabled_braids}.

Let $n \geq 2$ be an integer and $\mf g = \mf{sl}_n(\mb C)$.
The Weyl group $W_\mf g \simeq \on S_n$ acts naturally on $V \ceqq \mb C^n$, and we will use the vector representation $\mf g \hra \mf{gl}(V) \simeq \mf{gl}_n(\mb C)$.

\begin{remark}[General linear description]
	\label{rem:gl_n_description}

	Using the basis we identify $V$ with the standard Cartan subalgebra of $\mf{gl}(V)$, so $\mf t = V \cap \mf g$ (the standard Cartan subalgebra of $\mf g$) becomes the subspace of $n$-tuples of points of the complex plane with vanishing barycentre.

	The resulting inclusion $\mb C^{n-1} \simeq \mf t \hra V$ induces a homotopy equivalence
	\begin{equation*}
		\mf t_{\reg} \simeq \Conf_n = \mb C^n \, \mathbin{\big\backslash} \, \bigcup_{1 \leq i \neq j \leq n} H_{ij} \, ,
	\end{equation*}
	using the notation of~\eqref{eq:type_A_complement}, which moreover is compatible with the Weyl group action.
	Hence there is a second homotopy equivalence
	\begin{equation*}
		\mf t_{\reg} \bs W_\mf g \simeq \UConf_n \, ,
	\end{equation*}
	in the notation of~\eqref{eq:unordered_configuration_space}, and whenever useful we will work within the general linear Lie algebra.
\end{remark}

\subsection{Inductive step}
\label{sec:nonpure_type_A_example_step}

If $\Phi_\mf h \sse \Phi_\mf g = A_{n-1}$ is a Levi subsystem, we have an associated $J$-partition $\ul n = \coprod_{j \in J} I_j$, and
\begin{equation*}
	\Phi_\mf h \simeq \bops_{j \in J} A_{\abs{I_j} - 1} \subseteq A_{n-1} \, ,
\end{equation*}
with the usual convention that $A_0 = \varnothing$ (cf.~\cite{doucot_rembado_tamiozzo_2022_local_wild_mapping_class_groups_and_cabled_braids} and \S~\ref{sec:app_notions}).
Namely for $i \in \ul n$ we set
\begin{equation*}
	I_i \ceqq \set{i} \cup \Set{j \in \ul n | \pm \alpha^-_{ij} \in \Phi_{\mf h} } \sse \ul n \, , \qquad \alpha^-_{ij} = e^{\dual}_i - e^{\dual}_j \in \mf t^{\dual} \, ,
\end{equation*}
where $e_i^{\dual} \in V^{\dual}$ is an element of the canonical dual basis, and
\begin{equation*}
	J \ceqq \Set{ \min (I_i) | i \in \ul n } \sse \ul n \, .
\end{equation*}

The Weyl group of $\mf h$ thus comes with a natural factorisation
\begin{equation*}
	W_\mf h \simeq \prod_{j \in J} \on S_{I_j} \sse \on S_n = W_\mf g \, ,
\end{equation*}
with trivial factors corresponding to the trivial components of $\Phi_\mf h$.
The setwise stabiliser of $U = \Ker(\Delta_\mf h) \sse \mf t$ is bigger in general, since we can also permute components of $\Phi_\mf h$ of the same rank.

To state this precisely consider two nonempty finite sets $I$ and $K$, and suppose $I = \coprod_{k \in K} I_k$ is a $K$-partition of $I$ with parts $I_k \sse I$ of \emph{equal} cardinality $m \geq 1$---so $\abs I = m \abs K$.
Then the symmetric group $\on S_I$ contains the subgroup
\begin{equation*}
	N \ceqq \Set{ \tau \in \on S_I | \tau(I_k) \sse I_k \text{ for } k \in K } \simeq (\on S_m)^{\abs K} \, ,
\end{equation*}
which stabilises all parts (and permutes their elements).
If $I$ has a total order then there is a `complementary' subgroup $P \sse \on S_I$, which permutes all parts (fixing their elements): more precisely, if $I_k = \bigl( i_1^{(k)},\dc,i_m^{(k)} \bigr) \sse I$ for $k \in K$, then any element of $\sigma \in \on S_K \simeq P$ acts as
\begin{equation}
	\label{eq:part_permutations}
	\sigma \cl i_j^{(k)} \lmt i_j^{(\sigma_k)} \in I \, , \qquad j \in \set{1,\dc,m} \, .
\end{equation}

By construction $N \cap P = 1$ inside $\on S_I$, and $P$ acts on $N$ by conjugation.

\begin{lemma}[Cf.~\cite{yau_2019_infinity_operads_and_monoidal_categories_with_group_equivariance}, Lem.~3.2.8]
	\label{lem:semidirect_product}

	If $\bm{\tau} = \prod_K \tau^{(k)} \in N$, with $\tau^{(k)} \in \on S_{I_k} \simeq \on S_m$, then
	\begin{equation}
		\label{eq:permutation_conjugation_action_symmetric_group}
		\sigma \bm{\tau} \sigma^{-1} = \prod_K \tau^{(\sigma^{-1}_k)} \in N \, , \qquad \sigma \in P \, .
	\end{equation}
\end{lemma}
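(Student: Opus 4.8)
The plan is to verify~\eqref{eq:permutation_conjugation_action_symmetric_group} by a direct pointwise computation on $I$: two elements of $\on S_I$ coincide precisely when they agree on every element, so it suffices to evaluate both sides on an arbitrary $i_j^{(k)} \in I_k$. First I would record the two elementary actions explicitly in the chosen orderings $I_k = \bigl( i^{(k)}_1,\dc,i^{(k)}_m \bigr)$. By~\eqref{eq:part_permutations} the block permutation $\sigma \in P \simeq \on S_K$ acts by $i_j^{(k)} \lmt i_j^{(\sigma_k)}$, hence its inverse acts by $i_j^{(k)} \lmt i_j^{(\sigma^{-1}_k)}$; while the component $\tau^{(k)} \in \on S_{I_k} \simeq \on S_m$ of $\bm \tau$ permutes the ``vertical'' index within the single block $I_k$, by $i_j^{(k)} \lmt i_{\tau^{(k)}(j)}^{(k)}$, and leaves the remaining blocks fixed.

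Next I would compose the three maps on a fixed $i_j^{(k)}$, reading the conjugate $\sigma \bm \tau \sigma^{-1}$ right to left as functions. Applying $\sigma^{-1}$ sends $i_j^{(k)}$ to $i_j^{(\sigma^{-1}_k)}$, which lies in the block $I_{\sigma^{-1}_k}$; applying $\bm \tau$ then acts through the component indexed by that block, namely $\tau^{(\sigma^{-1}_k)}$, yielding $i_{\tau^{(\sigma^{-1}_k)}(j)}^{(\sigma^{-1}_k)}$; and applying $\sigma$ returns the superscript to $k$, giving $i_{\tau^{(\sigma^{-1}_k)}(j)}^{(k)}$. On the other hand the element $\prod_K \tau^{(\sigma^{-1}_k)} \in N$ acts on $i_j^{(k)}$ through its $k$-th component $\tau^{(\sigma^{-1}_k)}$, producing exactly the same $i_{\tau^{(\sigma^{-1}_k)}(j)}^{(k)}$. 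As $i_j^{(k)}$ was arbitrary this proves~\eqref{eq:permutation_conjugation_action_symmetric_group}; in particular the conjugate lies again in $N$, confirming that $P$ acts on $N$ by the displayed formula.

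The only delicate point will be the index bookkeeping: one must keep the vertical index $j \in \set{1,\dc,m}$, permuted by the $\tau^{(k)}$, rigorously separate from the horizontal block label $k \in K$, permuted by $\sigma$, and check that conjugation produces $\sigma^{-1}$ rather than $\sigma$ on the superscript. This is simply the semidirect-product structure $N \rtimes P \simeq \on S_m \wr \on S_K$ of the restricted wreath product made explicit, so I do not expect a genuine obstacle---the pointwise evaluation renders the bookkeeping transparent and leaves no residual calculation.
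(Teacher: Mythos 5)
Your computation is correct: evaluating $\sigma\bm{\tau}\sigma^{-1}$ on an arbitrary $i_j^{(k)}$ (right to left, as the paper's convention dictates) gives $i^{(k)}_{\tau^{(\sigma^{-1}_k)}(j)}$, which is exactly the action of the $k$-th component of $\prod_K \tau^{(\sigma^{-1}_k)}$, and you correctly track that conjugation produces $\sigma^{-1}$ in the superscript. The paper gives no proof of its own --- it defers to Yau, Lem.~3.2.8 --- and your direct pointwise verification is precisely the standard argument establishing the wreath-product structure $P \ltimes N \simeq \on S_K \wr\, \on S_m$ inside $\on S_I$.
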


Hence we have an \emph{inner} semidirect product $P \lts N \sse \on S_I$, and it follows that $(P \lts N) \bs N \simeq P$ canonically.
Equivalently the \emph{outer} semidirect product of $P$ and $N$, with respect to the action~\eqref{eq:permutation_conjugation_action_symmetric_group}, comes with a natural group embedding $P \lts N \hra S_I$.
This latter is also the wreath product $\on S_K \wr \, \on S_m \hra \on S_I$, cf. \S~\ref{sec:app_notions}.\fn{
	This is a particular example of application of the operadic composition of the symmetric group operad, cf.~\cite[\S~3.1]{yau_2019_infinity_operads_and_monoidal_categories_with_group_equivariance} and see below.}

\begin{remark}
	One has
	\begin{equation*}
		\abs{ P \lts N} = (m!)^{\, \abs K} \abs K ! \leq \bigl( m \abs K \, \bigr) ! = \abs{ \, \on S_I } \, ,
	\end{equation*}
	with strict inequality if $1 < m < \abs I$.
	Thus the embedding $P \lts N \hra \on S_I$ is \emph{proper} for nontrivial partitions of $I$.
\end{remark}

Let us apply this to the present situation: for an integer $i \geq 1$ write
\begin{equation*}
	K_i \ceqq \Set{ j \in J | \abs{I_j} = i } \subseteq J \, .
\end{equation*}
If $i \geq 2$, the integer $\abs{K_i} \geq 0$ is thus the multiplicity of $A_{i-1}$ as an irreducible component of $\Phi_\mf h$.
Instead for $i = 1$ one has a natural bijection
\begin{equation*}
	K_1 \lxra{\simeq} \Set{ i \in \ul n | \pm \alpha^-_{ij} \not\in \Phi_\mf h \text{ for any } j } \, , \qquad I_j = \set{i} \lmt i \, .
\end{equation*}
The subgroups $P_i \simeq \on S_{K_i}$ and $N_i \simeq (\on S_i)^{\abs{K_i}}$ of $\on S_J$ are defined as above---note $J$ inherits a natural total order as a subset of $\ul n$.

\begin{proposition}
	\label{prop:kernel_stabiliser_type_A}

	There is a group isomorphism
	\begin{equation}
		\label{eq:kernel_stabiliser_type_A}
		\Stab_{W_\mf g}(U) \simeq \prod_{i \geq 0} \bigl( \on S_{K_i} \wr \, \on S_i \bigr) \sse W_\mf g \, .
	\end{equation}
\end{proposition}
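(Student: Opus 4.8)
The plan is to identify $\Stab_{W_\mf g}(U)$ with the setwise stabiliser in $\on S_n$ of the set partition $\ul n = \coprod_{j \in J} I_j$, and then to read off the wreath-product decomposition block-size by block-size. First I would make $U$ fully explicit. By definition $U = \Ker(\Phi_\mf h)$, and $\Phi_\mf h$ consists exactly of the roots $\alpha^-_{ab}$ with $a,b$ in a common part $I_j$; hence $U$ is the subspace of $\mf t$ of those tuples that are constant along each part of the partition. To dispose of the barycentric constraint it is convenient to pass to the general linear picture of Rem.~\ref{rem:gl_n_description}: since $W_\mf g$ fixes $\mb C\Id_V$ pointwise and $\mb C\Id_V \cap \mf t = (0)$, intersecting with $\mf t$ shows $\Stab_{W_\mf g}(U) = \Stab_{W_\mf g}(\wh U)$ for the unconstrained block-constant subspace $\wh U = U \oplus \mb C\Id_V \sse V$, namely $\Set{ x \in V | x_a = x_b \text{ whenever } a,b \text{ lie in a common } I_j }$.

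The conceptual crux is that $\wh U$ determines the partition intrinsically. I would introduce the relation $a \approx b$ defined by $x_a = x_b$ for every $x \in \wh U$: this is an equivalence relation whose classes are precisely the parts $I_j$, since taking $x$ to be the indicator of the part containing $a$ separates $a$ from any $b$ outside that part. Using the coordinate action $(wx)_a = x_{w^{-1}(a)}$, a short computation shows that $w(\wh U) = \wh U$ forces $w$ to carry $\approx$-classes to $\approx$-classes; conversely any $w$ permuting the parts sends block-constant tuples to block-constant tuples, so it preserves $\wh U$. Hence $\Stab_{W_\mf g}(U)$ is exactly the setwise stabiliser of $\set{I_j}_{j \in J}$ inside $\on S_n$.

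It then remains to decompose this stabiliser. Because a bijection preserves cardinalities, such a $w$ can only permute parts of equal size among themselves, and the induced actions on the unions $\bigcup_{j \in K_i} I_j$ of the size-$i$ parts are independent across $i$; this yields a direct product over $i$. For fixed $i$ I would invoke Lem.~\ref{lem:semidirect_product} with $m = i$ and index set $K_i$: the subgroup $N_i \simeq (\on S_i)^{\abs{K_i}}$ stabilising each size-$i$ part and the complementary $P_i \simeq \on S_{K_i}$ permuting those parts assemble into the inner semidirect product $P_i \lt N_i \simeq \on S_{K_i} \wr \on S_i$. Taking the product over all $i$ produces the claimed isomorphism $\Stab_{W_\mf g}(U) \simeq \prod_{i \geq 0} \bigl( \on S_{K_i} \wr \on S_i \bigr)$, the $i = 1$ factor accounting for the free permutations of the singleton (regular) parts and the $i = 0$ factor being vacuous.

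I expect the main obstacle to be precisely the middle step: establishing that preserving the \emph{linear} subspace $U$ is equivalent to preserving the \emph{combinatorial} set partition. Once this intrinsic recovery is in place, the rest is organised bookkeeping carried by Lem.~\ref{lem:semidirect_product}. The only further points demanding care are the convention for the $W_\mf g$-action on coordinates (so that ``permuting the parts'' really matches preservation of $\wh U$) and a check that the edge cases of a single part or of singleton parts are subsumed by the formula.
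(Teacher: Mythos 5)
Your proposal is correct and follows essentially the same route as the paper's proof: pass to the extended (block-constant) kernel $\wt{\Ker}(\Delta_{\mf h}) = U \oplus^{\perp} \mb C\Id_V$ in $\mf{gl}(V)$, use that $W_{\mf g}$ fixes $\mb C\Id_V$ pointwise to reduce to the setwise stabiliser of the partition $\set{I_j}_{j \in J}$, and then assemble the wreath-product factors via Lem.~\ref{lem:semidirect_product}. Your middle step (recovering the partition intrinsically from the subspace via the relation $a \approx b$) just makes explicit a point the paper asserts more briefly.
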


\begin{proof}
	The statement is the algebraic rewriting of the following claim: the setwise stabiliser of $U = \Ker(\Delta_\mf h)$ is the subgroup of $W_\mf g = \on S_n$ that permutes parts $I_j \sse \ul n$ of the same cardinality, and that further permutes the elements within each part.
	By the above discussion this yields the direct product~\eqref{eq:kernel_stabiliser_type_A}---as permutations of disjoint parts commute.

	To prove the claim, recall the `extended' kernel $\wt{\Ker}(\Delta_\mf h) \sse V$ (in the general linear case) is defined by the condition that the coordinate of any vector are equal within each part $\mb C^{I_j} \sse V$, so its setwise stabiliser is given by the above condition.
	Thus to conclude it is enough to show that the setwise Weyl-stabiliser of the `essential' kernel $U = \wt{\Ker}(\Delta_\mf h) \cap \mf t$ (in the special linear case) is the same; but by construction
	\begin{equation*}
		\wt{\Ker}(\Delta_\mf h) = U \ops \mb{C} \Id_V \sse \mf{gl}(V) \, ,
	\end{equation*}
	and the centre is fixed (pointwise) by $W_{\mf g}$ (cf. Rem.~\ref{rem:gl_n_description}).
\end{proof}

\begin{corollary}
	\label{cor:reduced_weyl_group_type_A}

	One has $W_{\mf g \mid \mf h} \simeq \prod_{i \geq 0} \on S_{K_i}$.
\end{corollary}

\begin{proof}
	By definition $W_{\mf g \mid \mf h}$ is the quotient~\eqref{eq:weyl_group_fission}, which is readily computed in this case using Prop.~\ref{prop:kernel_stabiliser_type_A} and the factorisation
	\begin{equation}
		\label{eq:weyl_group_fission_type_A}
		W_\mf h \simeq \prod_{i \geq 1} (\on S_i)^{\abs{ K_i}} \, . \qedhere
	\end{equation}
\end{proof}

\begin{remark}
	This corresponds to the fact that $W_{\mf g \mid \mf h}$ is naturally identified with the subgroup permuting parts of equal cardinality, fixing elements within each part.
	In particular the exact group sequence
	\begin{equation*}
		1 \lra W_\mf h \lra \Stab_{W_\mf g}(U) \lra W_{\mf g \mid \mf h} \lra 1
	\end{equation*}
	splits.
	(This underlies a more general statement about the normalisers of parabolic subgroups of finite real reflection groups, cf.~\cite{howlett_1980_normalizers_of_parabolic_subgroups_of_reflection_groups}.)

	Further~\eqref{eq:weyl_group_fission_type_A} is naturally a subgroup of the Weyl group of the `reduced' root system
	\begin{equation*}
		\eval[1]{\Phi_\mf g}_U = \Set{ \eval[1]{\alpha}_U | \alpha \in \Phi_{\mf g} } \simeq A_{\abs J-1} \, ,
	\end{equation*}
	viz. a subgroup of `admissible' permutations---inside $\on S_J$.
\end{remark}

On the whole there is a Galois covering $\bm B_Q \to \bm B_{\ol Q}$ with $\prod_{i \geq 0} \bigl( \, \abs{K_i}! \bigr)$ sheets, and to go further let us work within $\mf{gl}(V) \supseteq \mf g$.
Recall from~\cite{doucot_rembado_tamiozzo_2022_local_wild_mapping_class_groups_and_cabled_braids} that there is a canonical vector space isomorphism $\wt{U} \ceqq \wt{\Ker}(\Delta_\mf h) \simeq \mb C^J$, and by Rem.~\ref{rem:gl_n_description} the $W_\mf g$-equivariant inclusion $U \hra \wt{U}$ yields homotopy equivalences
\begin{equation*}
	\bm B_Q \simeq \Conf_{\, \abs J} \, , \qquad  \bm B_Q \bs W_{\mf g} \simeq \UConf_{\, \abs J} \, ,
\end{equation*}
with fundamental groups $\PBr_{\; \abs J}$ and $\Br_{\, \abs J}$, respectively.
What we have here is an `intermediate' covering, since it is only $W_{\mf g \mid \mf h}$ that acts (freely) on $\bm B_Q$.

To simplify the notation let us then contemplate the following abstract situation.
For an integer $d \geq 1$ consider the ordered configuration space $Y_d \ceqq \Conf_d \sse \mb C^d$, as well as an $I$-partition $\varphi \cl \ul d \thra I$ with parts $I_i = \varphi^{-1}(i) \sse \ul d$, for $i \in I$.
Then there is a natural group embedding
\begin{equation*}
	\on S_{\varphi} \ceqq \prod_I \on S_{I_i} \lhra \on S_d \, ,
\end{equation*}
obtained by juxtaposing permutations, and we let $X_{\varphi} \ceqq Y_d \bs \on S_{\varphi}$ (the `semiordered' configuration space): this is the space of configurations of $d = \sum_i \abs{I_i}$ points in the complex plane, such that two of them are indistinguishable if they lie within the same part of the $I$-partition.

To identify the fundamental group recall there is an `augmentation' group morphism $p_d \cl \Br_d \to \on S_d$, with kernel $\pi_1(Y_d) = \PBr_d \sse \Br_d$.

\begin{proposition}
	\label{prop:semipure_braid_group}

	There is a group isomorphism
	\begin{equation*}
		\pi_1(X_{\varphi}) \simeq \Br_{\varphi} \sse \Br_d \, , \qquad \Br_{\varphi} \ceqq p_d^{-1} (\on S_{\varphi}) \, ,
	\end{equation*}
	and $\Br_{\varphi}$ is an extension of $\on S_{\varphi}$ by $\PBr_d$.
\end{proposition}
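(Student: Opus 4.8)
The plan is to recognise $X_\varphi$ as an intermediate layer of the Galois covering $Y_d \to \UConf_d$ and to read off its fundamental group from the subgroup correspondence for covering spaces. First I would observe that $\on S_d$ acts freely on $Y_d = \Conf_d$, since a permutation fixing a configuration of \emph{distinct} points is the identity; hence the subgroup $\on S_\varphi \sse \on S_d$ acts freely as well, and being finite it acts properly discontinuously on the Hausdorff manifold $Y_d$. By the very argument of Prop.~\ref{prop:nonpure_WMCG_factor} the quotient $r \colon Y_d \thra X_\varphi = Y_d \big\slash \on S_\varphi$ is then a (Galois) covering, and $X_\varphi$ is path-connected as a continuous image of the path-connected $Y_d$.

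Next I would assemble the tower of coverings $Y_d \longxrightarrow{r} X_\varphi \longxrightarrow{s} \UConf_d$, all obtained as quotients of the single free $\on S_d$-action on $Y_d$. The covering $Y_d \to \UConf_d$ is the regular one classified by the normal subgroup $\PBr_d = \Ker(p_d) \sse \Br_d$, whose deck group is $\on S_d = \Br_d \big\slash \PBr_d$ recovered precisely through the augmentation $p_d$. The Galois correspondence for this deck group then matches connected intermediate coverings with subgroups of $\on S_d$: the layer $Y_d \big\slash H$ corresponds to $H \sse \on S_d$, and the injection $s_* \colon \pi_1(X_\varphi) \hra \pi_1(\UConf_d) = \Br_d$ has image the preimage $p_d^{-1}(H)$. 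Specialising to $H = \on S_\varphi$ identifies $\pi_1(X_\varphi)$ with $p_d^{-1}(\on S_\varphi) = \Br_\varphi \sse \Br_d$, as desired; one fixes compatible base points along the tower so that this is an identification with an honest subgroup rather than merely with a conjugacy class.

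Finally the extension follows by restricting the augmentation: as $\PBr_d = \Ker(p_d)$ is contained in $\Br_\varphi$ and $p_d$ is surjective, the restriction $\eval[1]{p_d}_{\Br_\varphi}$ maps onto $\on S_\varphi$ with kernel $\PBr_d$, giving
\begin{equation*}
	1 \lra \PBr_d \lra \Br_\varphi \longxrightarrow{p_d} \on S_\varphi \lra 1 \, .
\end{equation*}
The one genuine point to pin down is the subgroup identification of the middle paragraph—that the layer $X_\varphi$ corresponds to exactly $\on S_\varphi$, and that $\pi_1(X_\varphi)$ sits inside $\Br_d$ as the literal preimage $p_d^{-1}(\on S_\varphi)$; everything else is the standard covering-space dictionary. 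The free action of $\on S_\varphi$ is what makes $X_\varphi$ a genuine manifold covering (rather than an orbifold quotient), and the normality of $\PBr_d$ in $\Br_d$ is what turns the intermediate-cover correspondence into a correspondence with \emph{subgroups} of $\on S_d$ through $p_d$.
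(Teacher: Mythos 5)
Your argument is correct and follows essentially the same route as the paper: both exploit the covering tower $Y_d \to X_{\varphi} \to \UConf_d$ and identify $\pi_1(X_{\varphi})$ with $p_d^{-1}(\on S_{\varphi})$ via the subgroup correspondence for the regular covering $Y_d \to \UConf_d$. The paper packages this as a morphism between the two short exact sequences of the principal bundles $\on S_{\varphi} \hookrightarrow Y_d \to X_{\varphi}$ and $\on S_d \hookrightarrow Y_d \to X_d$, whereas you invoke the Galois correspondence directly, but this is the same dictionary.
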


Here $\Br_{\varphi}$ is thus the `semipure' braid group of the partition, i.e. the group of braids whose underlying permutation lies within $\on S_{\varphi} \sse \on S_d$.

\begin{proof}
	There are Galois coverings $Y_d \to X_d \eqqcolon \UConf_d$ and $Y_d \to X_{\varphi}$, and it follows that the induced map $X_{\varphi} \to X_d$ is a covering (with $\bigl[ \, \on S_d \cl \on S_{\varphi} \bigr]$ sheets).
	Up to identifying groups and torsors after a suitable choice of base points, this yields a commutative diagram of pointed topological spaces, with (principle) fibre bundles in each row:
	\begin{equation*}
		\begin{tikzcd}
			\on S_{\varphi} \ar[hook]{r} \ar[hook]{d} & Y_d \ar[two heads]{r} \ar[equal]{d} & X_{\varphi} \ar[two heads]{d} \\
			\on S_d \ar[hook]{r} & Y_d \ar[two heads]{r} & X_d
		\end{tikzcd} \, .
	\end{equation*}

	In turn this leads to a morphism of (short) exact group sequences, proving the statement:
	\begin{equation*}
		\begin{tikzcd}[row sep=small]
			& \PBr_d \ar{r} \ar[equal]{dd} & \pi_1(X_{\varphi}) \ar[r] \ar[hook]{dd} & \on S_{\varphi} \ar[hook]{dd} \ar{rd} & \\
			1 \ar[ru] \ar[rd] & & & & 1 \\
			& \PBr_d \ar{r} & \Br_d \ar{r}[swap]{p_d} & \on S_d \ar{ur} &
		\end{tikzcd} \, .
	\end{equation*}
	(Note in particular that $\Ker \bigl(\eval[1]{p_d}_{\Br_{\varphi}} \bigr) = \Ker(p_d) \cap \Br_{\varphi} = \PBr_d$.)
\end{proof}

In our situation we thus find a group isomorphism
\begin{equation*}
	\pi_1(\bm B_{\ol Q}) \simeq \Br_{\varphi} \sse \Br_{\, \abs J} \, ,
\end{equation*}
where $\varphi \cl J \thra I \sse \mb Z_{\geq 0}$ is the $I$-partition obtained from $J = \coprod_{i \geq 0} K_i$ by removing the empty parts.

\begin{remark}
	The extreme cases are: (i) $\on S_{\varphi} = 1$, where $X_{\varphi} = Y_d$; and (ii) $\on S_{\varphi} = \on S_d$, where $X_{\varphi} = X_d$ is the (fully) unordered configuration space.
	In our setting this means that either no two irreducible components of $\Phi_{\mf h}$ have the same rank, or conversely that they all have the same rank---respectively.
\end{remark}

\begin{remark}
	There is also a different subgroup of $\Br_d$ associated with the partition and projecting onto $\on S_{\varphi}$, namely $\prod_i \Br_{I_i} \hra \Br_d$: this is the subgroup obtained by juxtaposing $\abs{I}$ braids, each on $\abs{I_i}$ strands.
	However in general the inclusion $\prod_i \Br_{I_i} \sse \Br_{\varphi}$ is \emph{proper}.
	E.g. $\Br_1 \times \Br_1 \sse \Br_2$ is trivial, while
	\begin{equation*}
		p_2^{-1}(\on S_1 \times \on S_1) = p_2^{-1}(1) = \PBr_2 \simeq \mb {Z} \, .
	\end{equation*}

	This simple example shows the fundamental group of the semiordered configuration space is \emph{not} just the direct product of the corresponding braid groups.
	Indeed it is possible that two points in different parts braid across each other (along a loop in $X_{\varphi}$), provided that they are not swapped by the underlying permutation of the overall braid.
\end{remark}

\begin{remark}
	By the Galois correspondence, the isomorphism class of the covering $X_{\varphi} \to X_d$ matches up with the conjugacy class of a subgroup of $\Br_d = \pi_1(X_d)$.\fn{
		All spaces involved are (locally) path-connected and semi-locally simply-connected~\cite[Thm.~1.38]{hatcher_2002_algebraid_topology}.}
	This is precisely the conjugacy class of $\Br_{\varphi} \sse \Br_d$, which is generically nontrivial, as $\on S_{\varphi} \sse \on S_d$ is generically \emph{not} a normal subgroup.
\end{remark}

\subsection{General case: ranked fission trees}

Suppose now to have an increasing filtration
\begin{equation*}
	\Phi_{\mf h_1} \sse \dm \sse \Phi_{\mf h_p} \sse \Phi_{\mf h_{p+1}} \ceqq A_{n-1} \, .
\end{equation*}
of Levi subsystems.
As in~\cite{doucot_rembado_tamiozzo_2022_local_wild_mapping_class_groups_and_cabled_braids} this corresponds to a `fission' tree $T = (T_0,\bm \phi)$ of height $p \geq 1$ (cf. \S~\ref{sec:app_notions}).
The set $J_l = J_{\mf h_l}$ is as above, for $l \in \ul p$, and then we add a tree root at level $p+1$.
By definition $\bm \phi(i) = j \in J_{l+1}$ means that the irreducible component of $\mf h_l \sse \mf h_{l+1}$ corresponding to $i \in J_l$ lies within the irreducible component of $\mf h_{l+1}$ corresponding to $j$.

This was enough to encode $\Gamma_Q$, while in the full/nonpure case we must retain more data, according to the results of the previous section.

\begin{definition}[Ranked fission tree]
	\label{def:ranked_fission_tree}

	A \emph{ranked} fission tree is a fission tree $T = (T_0,\bm \phi)$ equipped with a \emph{rank function} $\bm r \cl T_0 \to \mb Z_{\geq 1}$, i.e. a function satisfying
	\begin{equation*}
		\bm r(i) = \sum_{\bm\phi^{-1}(i)} \bm r(j) \, , \qquad i \in T_0 \, .
	\end{equation*}
	Then $r(T) \ceqq \bm r(\ast) \geq 1$ is the \emph{rank} of the tree.
\end{definition}

This means to each node we attach a positive rank, which equals the sum of its child-nodes'.
In particular $\sum_{J_l} \bm r(i) = r(T)$, independently of the level, and the rank function is determined by assigning ranks to the leaves---i.e. by $\eval[1]{\bm r}_{J_1} \in \mb Z_{\geq 1}^{J_1}$.

The algorithm to associate a ranked fission tree $(T,\bm r)$ to~\eqref{eq:root_filtration} is the following: the underlying fission tree is constructed as in~\cite{doucot_rembado_tamiozzo_2022_local_wild_mapping_class_groups_and_cabled_braids}, and we further set $\bm r(i) = k+1$ if the node $i \in J_l$ corresponds to a type-$A$ irreducible rank-$k$ component of $\Phi_{\mf h_{l+1}}$.
Working within the general linear Lie algebra, this is the same as setting $\bm r(i) = k$ if $i$ corresponds to an irreducible component isomorphic to $\Phi_{\mf{gl}_k(\mb C)}$---including $\Phi_{\mf{gl}_1(\mb C)} = \varnothing$. It follows that $r(T) = n$ if we work within $\mf{gl}_n(\mb C)$---%
and so in this sense the general linear case is more natural.

By construction the Weyl group of $\mf h_l \sse \mf g$ comes with a canonical group isomorphism
\begin{equation}
	\label{eq:weyl_groups_tree}
	W_{\mf h_l} \simeq \prod_{i \in J_l} \on S_{\bm r(i)} \, , \qquad l \in \ul p \, ,
\end{equation}
and to construct the stabiliser of the kernel flag in terms of the tree we introduce the following:

\begin{definition}
	An \emph{isomorphism} $(T_0,\bm \phi,\bm r) \to (T_0',\bm \phi',\bm r')$ of ranked fission trees is a bijection $\bm f \cl T_0 \to T_0'$ matching roots, and such that there are commutative diagrams:
	\begin{equation*}
		\begin{tikzcd}
			T_0 \sm \set{\ast} \ar{r}{\bm f} \ar{d}[swap]{\bm \phi} & T_0' \sm \set{\ast} \ar{d}{\bm \phi'} \\
			T_0 \ar{r}[swap]{\bm f} & T_0'
		\end{tikzcd}
		\qquad \text{and} \qquad
		\begin{tikzcd}[column sep=small]
			T_0 \ar{rr}{\bm f} \ar{rd}[swap]{\bm r} & & T_0' \ar{dl}[outer sep=-1.5pt]{\bm r'} \\
			& \mb Z_{\geq 1} &
		\end{tikzcd} \, .
	\end{equation*}
	An \emph{automorphism} of $(T_0,\bm \phi,\bm r)$ is an isomorphism $(T_0,\bm \phi,\bm r) \to (T_0,\bm \phi,\bm r)$; their group is denoted $\Aut(T,\bm r)$.
\end{definition}

This restricts the usual notion of isomorphism of (rooted) trees, by further asking that ranks be preserved.
Note by definition an automorphism preserves the nodes at each level, and is uniquely determined by the image of the leaves.

\subsection{General case: reflection groups}

It is possible to compute the automorphism group of the tree recursively, and in turn this will control the monodromy action of the Galois covering $\bm B_Q \to \bm B_{\ol Q} = \bm B_Q \bs W_{\mf g \mid \bm{\mf h}}$.

Choose then a ranked fission tree $(T,\bm r)$, and note its subtrees are equipped with restricted rank functions.
In particular let $\mc T = \mc T(T,\bm r)$ be the set of (ranked) \emph{maximal} proper subtrees, i.e. the subtrees of $T$ rooted at each child-node of the root, and choose a complete set of representatives $\wt{\mc T} \sse \mc T$ of isomorphism classes.
Finally denote by $n(t) \geq 1$ the cardinality of the isomorphism class of any maximal proper subtree $t \in \mc T$.

\begin{definition}
	The \emph{extended} automorphism group $\wt{\Aut}(T,\bm r)$ of the ranked fission tree $(T,\bm r)$ is defined recursively by
	\begin{equation}
		\label{eq:recursive_stabiliser_type_A}
		\wt{\Aut}(T,\bm r) = \prod_{t \in \wt{\mc T}} \on S_{n(t)} \wr \, \wt{\Aut} (t,\bm r_0) \, , \qquad \bm r_0 \ceqq \eval[1]{\bm r}_{t_0} \, ,
	\end{equation}
	with basis $\wt{\Aut} \bigl( i,r(i) \bigr) \ceqq \on S_{r(i)}$ for $i \in J_1$.
\end{definition}

A priori~\eqref{eq:recursive_stabiliser_type_A} depends on the choice of $\wt{\mc T} \sse \mc T$, but the following identification in particular shows it does not.

\begin{theorem}
	\label{thm:tree_automorphisms}

	One has $\wt{\Aut} (T,\bm r) = \Stab_{W_\mf g}(\bm U)$.
	Furthermore the automorphism group $\Aut(T,\bm r)$ is obtained recursively as in~\eqref{eq:recursive_stabiliser_type_A}, but with recursion basis $\Aut \bigl(i,r(i) \bigr) \ceqq \on S_1$ for $i \in J_1$---the trivial group.
\end{theorem}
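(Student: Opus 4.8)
The plan is to prove both assertions by a single induction on the height $p$ of the tree, reinterpreting the setwise stabiliser of the kernel flag as the symmetry group of a chain of nested set partitions. First I would pass to the general linear Lie algebra as in Rem.~\ref{rem:gl_n_description}, so that $W_\mf g = \on S_n$ acts by permuting the coordinates of $V = \mb C^n$ and the extended kernels $\wt U_l = \wt{\Ker}(\Delta_{\mf h_l}) \sse V$ replace the $U_l$ without altering any setwise stabiliser. Each $\wt U_l$ is the subspace of vectors constant along the blocks of the set partition $\mc P_l$ of $\ul n$ cut out by the level-$l$ nodes $J_l$, and the containments $U_{l+1} \sse U_l$ translate into a chain of refinements $\mc P_1 \preceq \dm \preceq \mc P_p \preceq \mc P_{p+1} = \set{\ul n}$ whose Hasse hierarchy is exactly the ranked tree $(T,\bm r)$, with $\bm r(i)$ the cardinality of the block indexed by $i$.

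The key observation is that a coordinate permutation fixes $\wt U_l$ as a subspace if and only if it permutes the blocks of $\mc P_l$; intersecting over $l$ gives
\[
	\Stab_{W_\mf g}(\bm U) = \Set{ w \in \on S_n | w(\mc P_l) = \mc P_l \text{ for all } l } ,
\]
the group of permutations of $\ul n$ preserving the whole refinement chain. Since a bijection sends a block to a block of equal cardinality, rank preservation is automatic---which is precisely why only rank-\emph{preserving} tree symmetries appear. I would then run the induction on $p$. The base case $p = 1$ is Prop.~\ref{prop:kernel_stabiliser_type_A}: the children of the root are the leaves $J_1$, two of which root isomorphic \emph{ranked} subtrees exactly when they share the same rank, so $n(t) = \abs{K_i}$ for the class of rank $i$, and the leaf basis $\wt{\Aut}(i,r(i)) = \on S_{r(i)}$ accounts for the within-block permutations, yielding $\prod_{i \geq 0} \on S_{K_i} \wr \on S_i$. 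For the inductive step I would peel off the root: a chain-preserving permutation first permutes the maximal proper subtrees among those isomorphic as ranked trees, contributing $\prod_{t \in \wt{\mc T}} \on S_{n(t)}$, and then acts within each subtree by a permutation preserving the corresponding restricted sub-chain, which by the inductive hypothesis is $\wt{\Aut}(t,\bm r_0)$. The conjugation of the within-subtree actions by the subtree-permutations is exactly Lem.~\ref{lem:semidirect_product}, upgrading the product to the wreath product and reproducing~\eqref{eq:recursive_stabiliser_type_A}.

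Finally the statement about $\Aut(T,\bm r)$ follows from the very same induction with no extra work: an honest ranked-tree automorphism is a bijection of nodes that may only permute isomorphic subtrees, and a leaf has no children and no internal content beyond its rank, so it is fixed individually---hence the recursion~\eqref{eq:recursive_stabiliser_type_A} holds verbatim with the basis replaced by the trivial group $\on S_1 = \Aut(i,r(i))$. I expect the main obstacle to be the inductive step rather than its conclusion: one must check carefully that a permutation preserving every $\mc P_l$ restricts, on the support of each maximal subtree, to a permutation preserving the restricted chain (so that the inductive hypothesis genuinely applies), and that reconciling the subtree-by-subtree recursion of the automorphism groups with the level-by-level recursion~\eqref{eq:recursive_stabiliser} of the $W_i$ yields the same group---the latter point being already guaranteed by Lem.~\ref{lem:recursive_stabiliser_equals_flag_stabiliser}, which identifies $W_1$ with $\Stab_{W_\mf g}(\bm U)$.
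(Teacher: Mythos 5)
Your proposal is correct and follows essentially the same route as the paper's proof: induction on the height $p$, with base case Prop.~\ref{prop:kernel_stabiliser_type_A} and an inductive step that peels off the root, permuting isomorphic maximal proper ranked subtrees (the wreath-product factor) while applying the hypothesis inside each subtree. Your reformulation of $\Stab_{W_\mf g}(\bm U)$ as the symmetry group of the chain of set partitions $\mc P_1 \preceq \dm \preceq \mc P_{p+1}$ is a welcome way of making explicit what the paper phrases informally in terms of nested eigenspace decompositions, but it is a presentational refinement rather than a different argument.
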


\begin{proof}
	The first item can be proven by induction on $p \geq 1$.
	If $p = 1$ then a maximal proper subtree is a leaf, so $\mc T = J_1$: then two leaves are isomorphic (as ranked trees) if and only if they have the same rank.
	Hence for $i \in J_1$ the integer $n(i) \geq 1$ is the number of rank-$r(i)$ leaves, and in this case
	\begin{equation*}
		\wt{\Aut}(T,\bm r) = \prod_{i \in \wt{J}_1} \on S_{n(i)} \wr \, \on S_{r(i)} \sse \on S_{r(T)} \, ,
	\end{equation*}
	where $\wt{J}_1 \sse J_1$ is a set of representatives of leaves---of all possible ranks.
	The result follows from Prop.~\ref{prop:kernel_stabiliser_type_A}.

	Now let $p \geq 2$.
	By the induction hypothesis, the bases of the wreath products in~\eqref{eq:recursive_stabiliser_type_A} are the setwise stabilisers of the deformation space of the `sub-irregular types' corresponding to the eigenspaces of the leading coefficient.
	In addition to that, we are then permuting isomorphic maximal proper subtrees, i.e. eigenspaces of the leading coefficient whose nested decomposition (into eigenspaces for the subleading coefficients) plays a symmetric role: this yields the whole of $\Stab_{W_\mf g}(\bm U)$, as any other permutation of the eigenvalues of the leading	coefficient moves the irregular type out of the space of admissible deformations.

	The second item is a straightforward extension from the unranked case, and can also be proven recursively on $p \geq 1$.
	If $p = 1$ an automorphism is the data a permutation of the leaves which matches up ranks.
	Hence in this case
	\begin{equation}
		\label{eq:automorphism_are_leaves_permutations}
		\Aut(T,\bm r) = \prod_{i \in \wt{J}_1} \on S_{n(i)} = \prod_{i \in \wt{J}_1} \on S_{n(i)} \wr \, \on S_1 \, ,
	\end{equation}
	using the above notation. (Note $\on S_{n(i)} \sse \on S_{J_1}$ is naturally identified with the symmetric group of rank-$r(i)$ leaves.)

	Now let $p \geq 2$.
	By the induction hypothesis the bases of the wreath products in~\eqref{eq:recursive_stabiliser_type_A} are the automorphism groups of the maximal proper ranked subtrees.
	In addition to that, we are then permuting isomorphic maximal proper subtrees: this yields the whole of $\Aut(T,\bm r)$, as any other permutation of child-nodes of the root, bringing along the corresponding subtrees, cannot restrict to an isomorphism of these latter.
\end{proof}

\begin{example}
	The most symmetric example is that in which $\bm r$ is constant at each level.
	In this case any automorphism of the underlying tree $T = (T_0,\bm \phi)$ preserves the rank function.

	If moreover $T$ is a complete $m$-ary tree, viz. if all interior nodes have $m \geq 1$ child-nodes, then simply
	\begin{equation*}
		\Aut(T,\bm r) \simeq \ub{\on S_m \wr \, \dm \, \wr \, \on S_m}_{p \text{ times }} \, ,
	\end{equation*}
	the $p$-fold wreath power---recall this example of wreath product is associative.
	The extended group instead is
	\begin{equation*}
		\wt{\Aut}(T,\bm r) \simeq (\on S_m)^{\wr p} \wr \, \on S_r \, ,
	\end{equation*}
	where $r \geq 1$ is the rank of any leaf.

	On the opposite end $\bm r$ is injective at each level, so the group $\Aut(T,\bm r)$ is trivial, and $\wt{\Aut}(T,\bm r) \simeq \prod_{J_1} \on S_{r(i)} \sse \on S_{r(T)}$.
	In this case the WMCG is pure.
\end{example}

Now by (recursive) construction $\Aut(T,\bm r) \sse \wt{\Aut}(T,\bm r)$ is a subgroup, so by Thm.~\ref{thm:tree_automorphisms} it can be identified with a subgroup of the kernel-flag stabiliser.

Indeed choose $\bm f \in \Aut(T,\bm r)$, so by definition $\bm f \cl T_0 \to T_0$ yields rank-preserving permutations $f_l \ceqq \eval[1]{\bm f}_{J_l} \in \on S_{J_l}$ of the nodes at each level $l \in \ul{p+1}$.
In particular $f_1$ permutes subsets of leaves (of constant rank), and we can map it to an element of $W_{\mf g} \simeq \on S_{r(T)}$ along the group embedding
\begin{equation*}
	\prod_{\wt{J}_1} \on S_{n(i)} \lhra \prod_{\wt{J}_1} \on S_{n(i)} \wr \, \on S_{r(i)} \sse \on S_{r(T)} \, ,
\end{equation*}
keeping the notation of the proof of Thm.~\ref{thm:tree_automorphisms}.
This yields an injective group morphism $\iota \cl \Aut(T,\bm r) \hra \Stab_{W_\mf g}(\bm U)$---since $\bm f$ is determined by $f_1$.

By construction the image of $\iota$ is disjoint from $W_{\mf h_1} = \prod_{J_1} \on S_{r(i)}$ (using~\eqref{eq:weyl_groups_tree}), and acts on it by conjugation, so there is a second group embedding
\begin{equation}
	\label{eq:semidirect_product_stabiliser_type_A}
	\wt{\iota} \cl \Aut(T,\bm p) \lts W_{\mf h_1} \lhra \wt{\Aut}(T,\bm r) \, .
\end{equation}

\begin{proposition}
	\label{prop:split_sequence_weyl_fission_type_A}

	The group morphism~\eqref{eq:semidirect_product_stabiliser_type_A} is surjective.
\end{proposition}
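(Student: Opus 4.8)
The map $\wt{\iota}\colon \Aut(T,\bm r) \ltimes W_{\mf h_1} \hra \wt{\Aut}(T,\bm r)$ is already known to be an injective group morphism, and by Theorem~\ref{thm:tree_automorphisms} we have $\wt{\Aut}(T,\bm r) = \Stab_{W_\mf g}(\bm U)$ and $\Aut(T,\bm r) \cong W_{\mf g \mid \bm{\mf h}}$. So surjectivity of $\wt\iota$ is exactly the assertion that the short exact sequence
$$ 1 \lra W_{\mf h_1} \lra \Stab_{W_\mf g}(\bm U) \lra W_{\mf g \mid \bm{\mf h}} \lra 1 $$
splits, with the splitting realised concretely by the tree automorphisms acting on leaves. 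The plan is to prove this surjectivity by induction on the height $p \geq 1$ of the tree, following exactly the recursive structure of the wreath-product formula~\eqref{eq:recursive_stabiliser_type_A}.

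**Base case and cardinality reduction.** The cleanest route is to verify the claim on cardinalities, since $\wt\iota$ is already injective: an injective morphism between finite groups of equal order is an isomorphism. For $p = 1$, equation~\eqref{eq:automorphism_are_leaves_permutations} gives $\Aut(T,\bm r) = \prod_{i \in \wt J_1} \on S_{n(i)}$ and $W_{\mf h_1} = \prod_{J_1}\on S_{r(i)} = \prod_{i\in\wt J_1}(\on S_{r(i)})^{n(i)}$, while $\wt{\Aut}(T,\bm r) = \prod_{i\in\wt J_1} \on S_{n(i)}\wr \on S_{r(i)}$. Since $\on S_{n}\wr \on S_{r} = \on S_n \ltimes (\on S_r)^n$ as a set, the domain and codomain of $\wt\iota$ have the same cardinality factor-by-factor, and in the base case surjectivity is immediate from the very definition of the wreath product.

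**Inductive step.** For $p \geq 2$ I would compare the two recursive descriptions. On the codomain side, Theorem~\ref{thm:tree_automorphisms} gives
$$ \wt{\Aut}(T,\bm r) = \prod_{t \in \wt{\mc T}} \on S_{n(t)} \wr\, \wt{\Aut}(t,\bm r_0), $$
and by the same theorem the automorphism group decomposes as $\Aut(T,\bm r) = \prod_{t\in\wt{\mc T}} \on S_{n(t)}\wr \Aut(t,\bm r_0)$. The Weyl factor $W_{\mf h_1}$ also distributes over the maximal proper subtrees: writing $W_{\mf h_1} = \prod_{t\in\wt{\mc T}} (W_{\mf h_1}^{t})^{n(t)}$, where $W_{\mf h_1}^t$ is the part of the bottom Levi Weyl group supported on the leaves of a representative $t$, I can reduce the global semidirect product to a product over $\wt{\mc T}$ of the "local" semidirect products $\on S_{n(t)}\ltimes\bigl(\Aut(t,\bm r_0)\ltimes W_{\mf h_1}^t\bigr)^{n(t)}$. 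For each $t$ the inner factor $\Aut(t,\bm r_0)\ltimes W_{\mf h_1}^t$ surjects onto $\wt{\Aut}(t,\bm r_0)$ by the induction hypothesis, and the associativity/functoriality of the wreath product $\on S_{n(t)}\wr(-)$ then promotes this to surjectivity of $\on S_{n(t)}\wr(\Aut(t,\bm r_0)\ltimes W_{\mf h_1}^t) \thra \on S_{n(t)}\wr\wt{\Aut}(t,\bm r_0)$. Taking the product over $\wt{\mc T}$ yields the claim.

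**Main obstacle.** The delicate point is bookkeeping the semidirect-product structure through the wreath product: I must check that the conjugation action of $\Aut(T,\bm r)$ on $W_{\mf h_1}$ (described in Lem.~\ref{lem:semidirect_product}, equation~\eqref{eq:permutation_conjugation_action_symmetric_group}) is compatible with the recursive regrouping, i.e. that permuting isomorphic subtrees permutes the corresponding blocks of $W_{\mf h_1}$ in the manner a wreath product demands. Concretely, the identity $\on S_n\wr(A\ltimes B) \cong (\on S_n\wr A)\ltimes B^n$ — valid when $A$ acts on $B$ and $\on S_n$ permutes the $n$ copies — is what makes the induction go through, and verifying that both $\wt\iota$ and the recursion respect this identity is the crux. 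Once this is in place, the equality of orders $\lvert\Aut(T,\bm r)\rvert\cdot\lvert W_{\mf h_1}\rvert = \lvert \wt{\Aut}(T,\bm r)\rvert$ follows recursively, and injectivity of $\wt\iota$ upgrades to surjectivity, completing the proof.
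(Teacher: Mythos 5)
Your proof is correct and follows essentially the same route as the paper's: induction on the height $p$, splitting $W_{\mf h_1}$ over the maximal proper subtrees as $\prod_{\wt{\mc T}}\bigl(W_{\mf h_1}(t)\bigr)^{n(t)}$ and pushing the induction hypothesis through the wreath products $\on S_{n(t)} \wr (-)$. The only cosmetic difference is that you close the argument by an injectivity-plus-cardinality count, whereas the paper computes the quotient $\wt{\Aut}(T,\bm r)\big\slash W_{\mf h_1}$ directly; the wreath-compatibility identity you flag as the crux is precisely the content of Lem.~\ref{lem:wreath_quotients} (together with Lem.~\ref{lem:direct_semidirect_product}), so your "main obstacle" is already discharged by the paper's appendix.
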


Hence the exact group sequence
\begin{equation*}
	1 \lra W_{\mf h_1} \lra \Stab_{W_\mf g}(\bm U) \lra W_{\mf g \mid \bm{\mf h}} \lra 1
\end{equation*}
splits, generalising the recursive step.

\begin{proof}
	It is equivalent to show that $W_{\mf h_1} \sse \wt{\Aut}(T,\bm r)$ is a normal subgroup, and that
	\begin{equation*}
		\wt{\Aut}(T,\bm r) \bs W_{\mf h_1} \simeq \Aut(T,\bm r) \, .
	\end{equation*}
	This can be proven recursively on $p \geq 1$, the base being the content of Cor.~\ref{cor:reduced_weyl_group_type_A}.

	If $p \geq 2$ consider a maximal proper subtree $t \in \mc T$: its leaves yield a subset $J_1(t) \sse J_1$, and there is a partition
	\begin{equation*}
		J_1 = \coprod_{\mc T} J_1(t) \, ,
	\end{equation*}
	of the leaves of $T$.
	Accordingly the centraliser of the irregular type splits as
	\begin{equation*}
		W_{\mf h_1} \simeq \prod_{\mc T} W_{\mf h_1}(t) = \prod_{\wt{\mc T}} \bigl(W_{\mf h_1}(t)\bigr)^{n(t)} \, ,
	\end{equation*}
	where $W_{\mf h_1}(t) \eqqcolon \prod_{J_1(t)} \on S_{r(i)}$ is the Weyl group of the Lie algebra $\mf h_1 \cap \mf{gl}_{r(t)}(\mb C) \sse \mf h_1$, and in turn $\mf{gl}_{r(t)}(\mb C) \sse \mf{gl}_{r(T)}(\mb C)$ matches up with the eigenspace of the leading coefficient corresponding to the root of the subtree $t$.

	Hence, using the decomposition~\eqref{eq:recursive_stabiliser_type_A} (and that direct products and quotients commute), the result follows from Lem.~\ref{lem:wreath_quotients} below; indeed in particular
	\begin{align*}
		\wt{\Aut}(T,\bm r) \bs W_{\mf h_1} & \simeq \Biggl( \prod_{\wt{\mc T}} \on S_{n(t)} \wr \, \wt{\Aut}(t,\bm r) \Biggr) \bs W_{\mf h_1} = \prod_{\wt{\mc T}} \Bigl( \on S_{n(t)} \wr \,  \wt{\Aut}(t,\bm r) \bs W_{\mf h_1(t)} \Bigr) \\
		                                   & = \prod_{\wt{\mc T}} \on S_{n(t)} \wr \, \Aut(t,\bm r) = \Aut(T,\bm r) \, ,
	\end{align*}
	by the recursive hypothesis---and definition of the automorphism group.
\end{proof}

\begin{lemma}
	\label{lem:wreath_quotients}

	Let $m \geq 0$ be an integer and $P$ a group, and choose a normal subgroup $N \sse P$.
	Then $1 \wr \, N \sse \on S_m \wr \, P$ is a normal subgroup, and in this identification there is a canonical group isomorphism
	\begin{equation}
		\label{eq:wreath_quotients}
		\bigl( \on S_m \wr \, P \bigr) \bs N \simeq \on S_m \wr \, \bigl( P \bs N \bigr) \, .
	\end{equation}
\end{lemma}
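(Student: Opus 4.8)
The plan is to strip the wreath products down to semidirect products and then invoke only elementary facts about these. With the paper's convention one has $\on S_m \wr P = \on S_m \ltimes P^m$, where the top group $\on S_m$ permutes the $m$ copies of the base $P$; under this identification the subgroup ``$1 \wr N$'' is the base power $N^m \sse P^m$ carried by the trivial element of $\on S_m$, with $N^m$ sitting coordinatewise. Everything then reduces to two observations about $\on S_m \ltimes P^m$.

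First I would check normality. Since $N \trianglelefteq P$ we obtain $N^m \trianglelefteq P^m$ coordinatewise, and since $\on S_m$ acts by permuting the $m$ equal factors it satisfies $\sigma \cdot N^m = N^m$ for every $\sigma \in \on S_m$. A subgroup of a semidirect product $A \ltimes B$ contained in the base $B$ is normal in the whole group precisely when it is normal in $B$ and stable under the $A$-action; both conditions hold here, so $N^m \trianglelefteq \on S_m \ltimes P^m$, which is the first assertion.

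For the isomorphism I would produce the quotient map directly rather than verify a universal property. The projection $\pi \colon P \thra P \big\slash N$ induces a coordinatewise surjection $\pi^m \colon P^m \thra (P \big\slash N)^m$ with kernel exactly $N^m$, and $\pi^m$ is $\on S_m$-equivariant because reducing modulo $N$ in each slot commutes with reshuffling the slots. Hence $(\sigma, \bm p) \mapsto \bigl( \sigma, \pi^m(\bm p) \bigr)$ is a well-defined homomorphism $\on S_m \ltimes P^m \to \on S_m \ltimes (P \big\slash N)^m = \on S_m \wr (P \big\slash N)$; it is surjective since $\pi$ is, and its kernel consists of the pairs with trivial $\on S_m$-part and base-part in $N^m$, i.e.\ exactly $N^m$. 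The first isomorphism theorem then yields~\eqref{eq:wreath_quotients}.

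The only point needing a line of care---hardly an obstacle---is that the coordinatewise reduction genuinely descends to the semidirect product, i.e.\ that the map respects the twisted product $(\sigma, \bm p)(\tau, \bm q) = \bigl( \sigma\tau, \bm p \,(\sigma \cdot \bm q) \bigr)$. This is immediate once the $\on S_m$-equivariance of $\pi^m$ is recorded, since then $\pi^m\bigl(\bm p \,(\sigma \cdot \bm q)\bigr) = \pi^m(\bm p)\,\bigl(\sigma \cdot \pi^m(\bm q)\bigr)$; no genuine difficulty arises, the content being purely formal.
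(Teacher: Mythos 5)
Your proposal is correct and follows essentially the same route as the paper's proof: both identify $\on S_m \wr P$ with $\on S_m \ltimes P^m$, establish normality of $1 \wr N = N^m$ by checking it is normalised by the base $P^m$ and stable under the permutation action, and then obtain the isomorphism from the natural surjection $\on S_m \wr P \to \on S_m \wr (P/N)$ with kernel $1 \wr N$. Your write-up is merely a little more explicit about the kernel computation and the compatibility with the twisted product, which the paper leaves implicit.
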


\begin{proof}
	Postponed to \S~\ref{proof:lem_wreath_quotients}.
\end{proof}

Hence in brief there is an explicit (finite) algorithm to compute the `effective' subquotient of the Weyl group acting freely on the deformation space of any type-$A$ irregular type, to yield the deformation space of the associated irregular class.

\begin{example}
	\label{ex:examples_presentations}
	Let us look at the examples of type-$A$ irregular type considered in~\cite{doucot_rembado_tamiozzo_2022_local_wild_mapping_class_groups_and_cabled_braids}, which all had pure WMCG isomorphic to  $\PBr_2 \times \PBr_3^2 \times \PBr_4$.
	We will see their associated stabilisers are \emph{not} all isomorphic.

	Let us work with the Lie group $G = \SL_9(\mb C)$: an irregular type $Q$ is given by a polynomial in the variable $x = z^{-1}$, whose coefficients are traceless diagonal matrices of size $9$.

	First consider
	\begin{equation*}
		Q = A_1 x + A_2 x^2 + A_3 x^3 \, , \qquad A_i \in \mf{sl}_9(\mb C) \, ,
	\end{equation*}
	with
	\begin{align*}
		A_1 & = \diag(4,3,2,1,0,-1,-2,-3,-4) \, , \\
		A_2 & = \diag(4,4,3,2,1,0,-3,-4,-7) \, ,  \\
		A_3 & = \diag(2,2,1,1,1,0,0,0,-7) \, .
	\end{align*}

	The corresponding ranked fission tree $(T,\bm r)$ is drawn below, indicating the rank of each node within the corresponding vertex:
	\begin{center}
		\begin{tikzpicture}
			\foreach \name/\x/\y/\r in {A1/1/0/1,A2/2/0/1,A3/3/0/1,A4/4/0/1,A5/5/0/1,A6/6/0/1,A7/7/0/1,A8/8/0/1,A9/9/0/1,
					C1/1.5/2/2,C2/4/2/3,C3/7/2/3,C4/9/2/1,
					B1/1.5/1/2,B2/3/1/1,B3/4/1/1,B4/5/1/1,B5/6/1/1,B6/7/1/1,B7/8/1/1,B8/9/1/1, D1/5/3/9}
			\node[vertex] (\name) at (\x,\y){\r};
			\foreach \from/\to in {A1/B1,A2/B1,A3/B2,A4/B3,A5/B4,A6/B5,A7/B6,A8/B7,A9/B8,
					B1/C1,B2/C2,B3/C2,B4/C2,B5/C3,B6/C3,B7/C3,B8/C4,D1/C1,D1/C2,D1/C3,D1/C4}
			\draw (\from) -- (\to);
		\end{tikzpicture}
	\end{center}

	From the recursive algorithm we get
	\begin{equation*}
		\Aut(T,\bm r) = \on S_2 \times (\on S_2 \wr \on S_3) \, .
	\end{equation*}

	This is the same as the automorphism group of the ranked tree for the irregular type $Q = A_1 x + A_2 x^2$, with
	\begin{equation*}
		A_1 = \diag(4,3,2,1,0,-1,-2,-3,-4) \, , \quad A_2 = \diag(4,1,1,0,0,0,-2,-2,-2) \in \mf{sl}_9(\mb C) \, .
	\end{equation*}
	Indeed in that case the ranked fission tree is as follows:
	\begin{center}
		\begin{tikzpicture}
			\foreach \name/\x/\y/\r in {A1/1/0/1,A2/2/0/1,A3/3/0/1,A4/4/0/1,A5/5/0/1,A6/6/0/1,A7/7/0/1,A8/8/0/1,A9/9/0/1,
					B1/1/1/1,B2/2.5/1/2,B3/5/1/3,B4/8/1/3, C1/5/2/9}
			\node[vertex] (\name) at (\x,\y){\r};
			\foreach \from/\to in {A1/B1,A2/B2,A3/B2,A4/B3,A5/B3,A6/B3,A7/B4,A8/B4,A9/B4, C1/B1, C1/B2,C1/B3,C1/B4}
			\draw (\from) -- (\to);
		\end{tikzpicture}
	\end{center}

	Finally let us consider $Q = A_1 x + A_2 x^2 + A_3 x^3 + A_4 x^4$, with
	\begin{align*}
		A_1 & = \diag(4,3,2,1,0,-1,-2,-3,-4) \, , \\
		A_2 & =\diag(4,4,3,2,1,0,-3,-4,-7) \, ,   \\
		A_3 & = \diag(2,2,2,2,1,0,-3,-3,-3) \, ,  \\
		A_4 & = \diag(1,1,1,1,1,1,0,-2,-4) \, .
	\end{align*}

	The ranked fission tree is then:
	\begin{center}
		\begin{tikzpicture}
			\foreach \name/\x/\y/\r in {A1/1/0/1,A2/2/0/1,A3/3/0/1,A4/4/0/1,A5/5/0/1,A6/6/0/1,A7/7/0/1,A8/8/0/1,A9/9/0/1,
					B1/1.5/1/2,B2/3/1/1,B3/4/1/1,B4/5/1/1,B5/6/1/1,B6/7/1/1,B7/8/1/1,B8/9/1/1,
					C1/2.5/2/4,C2/5/2/1,C3/6/2/1,C4/7/2/1,C5/8/2/1,C6/9/2/1,
					D1/3.5/3/6,D2/7/3/1,D3/8/3/1,D4/9/3/1,
					E1/5/4/9}
			\node[vertex] (\name) at (\x,\y){\r};
			\foreach \from/\to in {A1/B1,A2/B1,A3/B2,A4/B3,A5/B4,A6/B5,A7/B6,A8/B7,A9/B8,
					B1/C1,B2/C1,B3/C1,B4/C2,B5/C3,B6/C4,B7/C5,B8/C6,
					C1/D1,C2/D1,C3/D1,C4/D2,C5/D3,C6/D4,
					D1/E1,D2/E1,D3/E1,D4/E1}
			\draw (\from) -- (\to);
		\end{tikzpicture}
	\end{center}
	Its automorphism group is now $\Aut(T,\bm r) = \on S_3 \times \on S_2^3$, which is \emph{not} the same as in the previous two cases.
	This is not contradictory: $\Gamma_Q$ only depends on the whole set of unordered configuration spaces attached to the (unranked) fission tree, while $\Gamma_{\ol Q}$ also depends on their positions in the tree.
\end{example}

\subsection{General case: (cabled) braid group}

Denote now by $\bm B_Q = \prod_{l = 1}^p \bm B_l$ the deformation space, in the decomposition associated with the fission tree (as in~\cite{doucot_rembado_tamiozzo_2022_local_wild_mapping_class_groups_and_cabled_braids}).
This means $\bm B_l \sse \mb C^{J_l}$ is a product of ordered configuration spaces (= type-$A$ root-hyperplane complements), attached to the nodes at the above level $J_{l+1}$: namely
\begin{equation*}
	\bm B_l = \prod_{J_{l+1}} \Conf_{k_i} \sse \mb C^{J_l} \, , \qquad k_i = \abs{ \bm \phi^{-1}(i) } \geq 1 \, ,
\end{equation*}
counting the number of child-nodes.
Thus globally
\begin{equation}
	\label{eq:deformation_space_tree}
	\bm B_Q = \prod_{T_0} \Conf_{k_i} \sse \mb C^{T_0 \sm \set{\ast}} \, , \qquad \text{and} \qquad \pi_1(\bm B_Q) \simeq \prod_{T_0} \PBr_{k_i} \, .
\end{equation}

Now recall from~\cite[\S~6]{doucot_rembado_tamiozzo_2022_local_wild_mapping_class_groups_and_cabled_braids} that the \emph{cabling} of pure braid group operad, viz. the operadic composition
\begin{equation}
	\label{eq:pure_braid_group_operad_composition}
	\gamma^{\ms{P\!B}} \cl \PBr_n \times \prod_{i = 1}^n \PBr_{k_i} \lra \PBr_k \, , \qquad k = \sum_i k_i \, ,
\end{equation}
for $n,k_1,\dc,k_n \geq 0$, yields a group embedding $\pi_1(\bm B_Q) \hra \PBr_{J_1}$.
More precisely recursive cabling along the (unranked) fission tree $T = (T_0,\bm \phi)$ leads to the pure `cabled braid group' $\ms{P\!B}(T) \sse \PBr_{\, \abs{J_1}}$ of the tree, and there is a group isomorphism $\pi_1(\bm B_Q) \simeq \ms{P\!B}(T)$.\fn{
	See op.~cit. for an explanation of terminology, due to the nested braiding of eigenspaces for the coefficients of the irregular type.}
The point is that in the pure case one finds a `noncrossed' group/action operad, which in particular implies~\eqref{eq:pure_braid_group_operad_composition} is a group morphism equipping the domain with the direct product structure.

Here instead we naturally encountered the operadic composition of the \emph{symmetric group operad} $\ms S = \bigl( \on S_{\bullet}, 1 \in \on S_1, \gamma^{\ms S} \bigr)$, viz. the function
\begin{equation}
	\label{eq:symmetric_group_operad_composition}
	\gamma^{\ms S} \cl \on S_n \times \prod_{i = 1}^n \on S_{k_i} \lra \on S_k \, , \qquad (\sigma,\bm \tau) \lmt \gamma^{\ms S}(\sigma;\bm \tau) \, ,
\end{equation}
for $\sigma \in \on S_n$ and $\bm \tau = \prod_{i = 1}^n \tau^{(i)}$.
Its definition is a generalisation of the above construction to arbitrary partitions $\ul k = \coprod_{i = 1}^n I_i$, where $k_i = \abs{I_i}$.
Namely there is a `block permutation' operation
\begin{equation*}
	\on S_n \lra \on S_k \, , \qquad \sigma \lmt \sigma \braket{ k_1,\dc,k_n} \, ,
\end{equation*}
which consists in the permutation of all parts by fixing their elements, and then
\begin{equation*}
	\gamma^{\ms S}(\sigma;\bm \tau) \ceqq \sigma \braket{ k_1,\dc,k_n} \cdot \bm \tau \in \on S_k \, ,
\end{equation*}
with tacit use of the natural group embedding $\prod_i \on S_{I_i} \hra \on S_k$ on the right factor.

\begin{lemma}
	\label{lem:semidirect_product_symmetric}

	If $k_1 = \dm = k_n$ then~\eqref{eq:symmetric_group_operad_composition} is an injective morphism, equipping the domain with the semidirect product structure.
\end{lemma}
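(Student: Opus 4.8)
The plan is to recognise the operadic composition~\eqref{eq:symmetric_group_operad_composition} as the multiplication of the inner semidirect product $P \lt N$ already constructed just before Lem.~\ref{lem:semidirect_product}, specialised to $K = \ul n$ and to parts of common cardinality $m \ceqq k_1 = \dm = k_n$. Concretely take $I = \ul k$ with the partition $\ul k = \coprod_{i = 1}^n I_i$, let $N = \prod_i \on S_{I_i} \simeq (\on S_m)^n$ be the block-diagonal subgroup and $P \simeq \on S_n$ the subgroup of block permutations~\eqref{eq:part_permutations}. Then by its very definition $\gamma^{\ms S}(\sigma;\bm\tau) = \sigma\braket{m,\dc,m} \cdot \bm\tau$ is exactly the product, taken inside $\on S_k$, of the $P$-element $\sigma\braket{m,\dc,m}$ and the $N$-element $\bm\tau$; hence $\gamma^{\ms S}$ factors as $\on S_n \times (\on S_m)^n \xrightarrow{\simeq} P \times N \to PN \sse \on S_k$, and the whole lemma amounts to showing this product map is an injective morphism for the semidirect structure.

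First I would check that block permutation $\on S_n \to \on S_k$, $\sigma \lmt \sigma\braket{m,\dc,m}$, is a group homomorphism with image $P$: this is precisely where the equal-size hypothesis is essential, since only then is there for every $\sigma$ a canonical order-preserving identification $I_i \simeq I_{\sigma(i)}$, and these compose through~\eqref{eq:part_permutations} to yield $(\sigma\sigma')\braket{m,\dc,m} = \sigma\braket{m,\dc,m}\,\sigma'\braket{m,\dc,m}$ (with unequal sizes no such map exists, and the statement genuinely fails). The block-diagonal embedding $(\on S_m)^n \xrightarrow{\simeq} N$ is visibly an isomorphism. The homomorphism property of $\gamma^{\ms S}$ would then follow by expanding
\begin{equation*}
	\gamma^{\ms S}(\sigma;\bm\tau) \cdot \gamma^{\ms S}(\sigma';\bm\tau') = \sigma\braket{m,\dc,m}\,\bm\tau\,\sigma'\braket{m,\dc,m}\,\bm\tau' = (\sigma\sigma')\braket{m,\dc,m}\,\bigl( \sigma'\braket{m,\dc,m}^{-1}\bm\tau\,\sigma'\braket{m,\dc,m} \bigr)\bm\tau'
\end{equation*}
inside $\on S_k$: the middle factor is $\bm\tau$ conjugated by an element of $P$, which by the conjugation formula of Lem.~\ref{lem:semidirect_product} (i.e.~\eqref{eq:permutation_conjugation_action_symmetric_group}) is again an element of $N$, obtained from $\bm\tau$ by permuting its components. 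The right-hand side is thus $\gamma^{\ms S}$ of the product of $(\sigma,\bm\tau)$ and $(\sigma',\bm\tau')$ in $\on S_n \lt (\on S_m)^n$, with $\on S_n$ acting by permutation of the $n$ factors.

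I expect the only real obstacle to be pure bookkeeping: matching the direction of the conjugation action recorded in~\eqref{eq:permutation_conjugation_action_symmetric_group} with the chosen left/right convention for the semidirect product, which is what decides whether one conjugates by $\sigma'$ or by $\sigma'^{-1}$ and hence fixes the action in $\on S_n \lt (\on S_m)^n$ unambiguously. Injectivity is then immediate from $N \cap P = 1$: if $\sigma\braket{m,\dc,m}\,\bm\tau = \sigma'\braket{m,\dc,m}\,\bm\tau'$ then $\sigma'\braket{m,\dc,m}^{-1}\sigma\braket{m,\dc,m} = \bm\tau'\bm\tau^{-1}$ lies in $P \cap N = 1$, forcing $\sigma = \sigma'$ (block permutation being injective) and $\bm\tau = \bm\tau'$. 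Consequently $\gamma^{\ms S}$ is an injective morphism onto the inner semidirect product $P \lt N = \on S_n \wr \on S_m \sse \on S_k$, transporting the semidirect structure onto the domain as asserted.
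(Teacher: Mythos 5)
Your proposal is correct and follows essentially the same route as the paper's proof: compatibility with the product via the conjugation formula~\eqref{eq:permutation_conjugation_action_symmetric_group} together with the fact that block permutation $\sigma \mapsto \sigma\braket{m,\dc,m}$ is a group morphism in the equal-size case, and injectivity from the trivial intersection of the block-permutation image with the block-diagonal subgroup. You have merely spelled out in full the computation that the paper compresses into two sentences.
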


\begin{proof}
	Postponed to~\ref{proof:lem_semidirect_product_symmetric}.
\end{proof}

\begin{remark}
	This means the operadic compositions yields in particular group embeddings $\on S_n \wr \, \on S_m \hra \on S_{mn}$, which were used above.
\end{remark}

The recursive definition~\eqref{eq:recursive_stabiliser_type_A} now becomes
\begin{equation}
	\label{eq:cabled_symmetric_group}
	\Aut(T,\bm r) = \prod_{t \in \wt{\mc T}} \gamma^{\ms S} \Bigl( \on S_{n(t)} \times \Aut( t,\bm r_0)^{n(t)} \Bigr) \sse \on S_{J_1} \, , \qquad \bm r_0 = \eval[1]{\bm r}_{t_0} \, ,
\end{equation}
starting again from the trivial group at each leaf.
Finally, this can be reformulated to exhibit the relation with braid groups.

\begin{lemma}
	\label{lem:direct_semidirect_product}

	Let $\set{P_i}_{i \in I}$ and $\set{N_i}_{i \in I}$ be finite collections of groups, and $\rho_i \cl P_i \to \Aut(N_i)$ group morphisms.
	Then there is a canonical group isomorphism
	\begin{equation*}
		P \lts N \simeq \prod_I P_i \lts N_i \, , \quad P = \prod_I P_i \, , \quad N = \prod_I N_i \, ,
	\end{equation*}
	using the product action on the left-hand side:
	\begin{equation*}
		\rho \cl P \lra \prod_I \Aut(N_i) \sse \Aut(N) \, , \qquad \rho = \prod_I \rho_i \, .
	\end{equation*}
\end{lemma}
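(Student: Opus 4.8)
The plan is to exhibit the canonical reshuffling of factors as the desired isomorphism, and then to check that it intertwines the two group laws by a componentwise comparison. Concretely, both sides share the same underlying set, since
\begin{equation*}
	\prod_I (P_i \times N_i) \simeq \Bigl( \prod_I P_i \Bigr) \times \Bigl( \prod_I N_i \Bigr) \, ,
\end{equation*}
after reordering the factors; so I would let $\Psi$ be precisely this reordering, sending a tuple $\bigl( (p_i,n_i) \bigr)_{i \in I}$ to the pair $\bigl( (p_i)_I,(n_i)_I \bigr)$. This is manifestly a bijection and involves no choices, whence canonical: the entire content is thus that $\Psi$ is a group morphism.

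Before that I would record that $\rho = \prod_I \rho_i$ genuinely defines an action $P \to \Aut(N)$, using the standard embedding $\prod_I \Aut(N_i) \sse \Aut(N)$ that lets each factor act on the corresponding coordinate of $N = \prod_I N_i$ and trivially on the others. This is immediate, and it is exactly what licenses the factorwise computation: the product action is \emph{diagonal}, i.e. $\rho\bigl( (p_i)_I \bigr)\bigl( (n_i)_I \bigr) = \bigl( \rho_i(p_i)(n_i) \bigr)_I$.

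The core step is then the compatibility of the two twists. On the domain the multiplication of $\prod_I (P_i \ltimes N_i)$ is taken componentwise, and in the $i$-th slot it is the semidirect law of $P_i \ltimes N_i$, twisting $N_i$ by $\rho_i$; on the codomain there is a single semidirect law twisting $N$ by $\rho$. Comparing the two product formulas slot by slot, the ambient twist restricts in each coordinate---by the diagonality just noted---to the twist $\rho_i$ of the matching factor, so the two laws agree under $\Psi$. The only place requiring care is the bookkeeping of the inverse/twist dictated by whichever semidirect-product convention is in force, so that the $\rho$-factor lands on the correct coordinate in each slot; once the convention is fixed this is a one-line check, and $\Psi$ is a bijective homomorphism, hence the claimed canonical isomorphism.
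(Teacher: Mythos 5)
Your proposal is correct and follows essentially the same route as the paper's own proof: the same canonical reshuffling bijection $\bigl((p_i,n_i)\bigr)_I \mapsto \bigl((p_i)_I,(n_i)_I\bigr)$, followed by a componentwise comparison of the two semidirect multiplication laws using the diagonal nature of $\rho = \prod_I \rho_i$. The paper simply writes out the two product formulas explicitly where you describe the check verbally, but the content is identical.
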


\begin{proof}
	Postponed to \S~\ref{proof:lem_direct_semidirect_product}.
\end{proof}

It follows that~\eqref{eq:cabled_symmetric_group} is equivalent to the recursive definition
\begin{equation}
	\label{eq:automorphism_group_true_recursion}
	\Aut(T,\bm r) = \on S_{\varphi} \lts \prod_{\wt{\mc T}} \Aut(T,\bm r_0)^{n(t)} \, ,
\end{equation}
introducing the $\wt{\mc T}$-partition $\varphi \cl J_p \to \wt{\mc T}$ induced from the isomorphism classes of maximal proper subtrees; this means $\on S_{\varphi} = \prod_{\wt{\mc T}} \on S_{n(t)} \sse \on S_{J_p}$.

The expression~\eqref{eq:automorphism_group_true_recursion} clarifies the natural definition of an analogous (full/nonpure) `cabled' braid groups: one should `lift' this through the (augmentation) operad morphism $\bm p \cl \ms B \to \ms S$, where $\ms B = \bigl( \Br_{\bullet},1 \in \Br_1,\gamma^{\ms B} \bigr)$ is the (full/nonpure) braid group operad.

\begin{definition}
	\label{def:cabled_braid_group}

	The \emph{cabled braid group} of the ranked tree $(T,r)$ is the group recursively defined by
	\begin{equation}
		\label{eq:cabled_braid_group}
		\ms B(T,r) = \Br_{\varphi} \lts \prod_{\wt{\mc T}} \ms B (t,\bm r_0)^{n(t)} \sse \Br_{\, \abs{J_1}} \, ,
	\end{equation}
	with basis $\ms B \bigl( i,r(i) \bigr) \ceqq \Br_1$, for $i \in J_1$, using the semipure braid group of Prop.~\ref{prop:semipure_braid_group}.
\end{definition}

Here $\Br_{\varphi} \sse \Br_{\, \abs{J_p}}$ is the subgroup corresponding to the braiding of the maximal proper subtrees (i.e. the equal-dimensional eigenspaces of the leading coefficient), acting by conjugation of the cabled braid group of any such subtree.

Finally we can prove that~\eqref{eq:cabled_braid_group} is the correct definition, i.e. that indeed this is the group controlling the topology of admissible deformations of type-$A$ irregular classes.

\begin{theorem}
	\label{thm:type_A_local_wmcgs_are_cabled_braid_groups}

	There is a group isomorphism $\pi_1(\bm B_{\ol Q}) \simeq \ms B(T,r)$, and $\ms B(T,r)$ is an extension of $\Aut(T,r)$ by $\ms{P\!B}(T)$.
\end{theorem}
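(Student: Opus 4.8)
The plan is to prove both assertions by induction on the height $p \geq 1$ of the ranked fission tree, mirroring the recursion~\eqref{eq:cabled_braid_group} defining $\CBr(T,\bm r)$ with a matching recursion on the topology of $\ol{\bm B}$. The base case $p = 1$ is exactly Prop.~\ref{prop:semipure_braid_group}: there $\ol{\bm B} = X_\varphi$ is the semiordered configuration space, so $\pi_1(\ol{\bm B}) \simeq \Br_\varphi = \CBr(T,\bm r)$, the latter equality holding because the leaf factors $\CBr \bigl( i,r(i) \bigr) = \Br_1$ are trivial. The starting point for the inductive step is the product decomposition of the deformation space coming from~\eqref{eq:deformation_space_tree}: isolating the root contribution one has
\begin{equation*}
	\bm B \simeq \Conf_{\abs{J_p}} \times \prod_{i \in J_p} \bm B(t_i) \, ,
\end{equation*}
where $t_i$ is the maximal proper subtree rooted at the child-node $i \in J_p$, and $\bm B(t_i)$ is its own deformation space. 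I would check this decomposition is compatible with the $\Stab_{W_\mf g}(\bm U) \simeq \Aut(T,\bm r)$-action, which by~\eqref{eq:automorphism_group_true_recursion} splits as $\on S_\varphi \ltimes \prod_{i \in J_p} \Aut(t_i,\bm r_0)$: the factor $N \ceqq \prod_i \Aut(t_i,\bm r_0)$ acts on $\prod_i \bm B(t_i)$ factorwise, while $\on S_\varphi$ both block-permutes the coordinates of $\Conf_{\abs{J_p}}$ and permutes the isomorphic factors $\bm B(t_i)$.

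I would then take the quotient in two stages. First, quotient by $N$: since each covering $\bm B(t_i) \to \ol{\bm B(t_i)}$ is Galois with group $\Aut(t_i,\bm r_0)$ (Thm.~\ref{thm:local_wmcg_extension} applied to the sub-irregular type attached to the $i$-th eigenspace of the leading coefficient), the quotient is
\begin{equation*}
	\wt{\bm B} \ceqq \bm B \big\slash N \simeq \Conf_{\abs{J_p}} \times \prod_{i \in J_p} \ol{\bm B(t_i)} \, , \qquad \pi_1(\wt{\bm B}) \simeq \PBr_{\abs{J_p}} \times \prod_{i \in J_p} \CBr(t_i,\bm r_0) \, ,
\end{equation*}
using the inductive hypothesis $\pi_1 \bigl( \ol{\bm B(t_i)} \bigr) \simeq \CBr(t_i,\bm r_0)$ on the fibre factors. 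The residual group $\on S_\varphi$ now acts freely on $\wt{\bm B}$ (already freely on the configuration factor), and projecting away the fibre exhibits $\ol{\bm B} = \wt{\bm B}\big\slash \on S_\varphi$ as the fibre bundle associated to the Galois covering $\Conf_{\abs{J_p}} \to \Conf_{\abs{J_p}}\big\slash \on S_\varphi = X_\varphi$ through the $\on S_\varphi$-action permuting the factors of $F \ceqq \prod_{i \in J_p} \ol{\bm B(t_i)}$:
\begin{equation*}
	F \lhra \ol{\bm B} \lra X_\varphi \, .
\end{equation*}

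Because $X_\varphi$ is aspherical (being covered by the $K(\pi,1)$ space $\Conf_{\abs{J_p}}$), the homotopy long exact sequence of this bundle collapses to a short exact sequence
\begin{equation*}
	1 \lra \prod_{i \in J_p} \CBr(t_i,\bm r_0) \lra \pi_1(\ol{\bm B}) \lra \Br_\varphi \lra 1 \, ,
\end{equation*}
recalling $\pi_1(X_\varphi) \simeq \Br_\varphi$ from Prop.~\ref{prop:semipure_braid_group}. A compatible ($\on S_\varphi$-fixed) choice of basepoint in the fibre yields a section, so the sequence splits; and the induced conjugation action of $\Br_\varphi$ on $\prod_i \CBr(t_i,\bm r_0)$ is the bundle monodromy, which factors through the augmentation $\Br_\varphi \thra \on S_\varphi$ permuting the isomorphic subtree-factors. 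This is precisely the operadic action defining the semidirect product in Def.~\ref{def:cabled_braid_group}, whence $\pi_1(\ol{\bm B}) \simeq \Br_\varphi \ltimes \prod_{i \in J_p}\CBr(t_i,\bm r_0) = \CBr(T,\bm r)$, completing the induction. The final assertion is then immediate: $\bm B \to \ol{\bm B}$ is a Galois covering with group $\Aut(T,\bm r)$ and $\pi_1(\bm B) \simeq \PCBr(T)$ by~\eqref{eq:deformation_space_tree}, so by Thm.~\ref{thm:local_wmcg_extension} the group $\CBr(T,\bm r) \simeq \pi_1(\ol{\bm B})$ is an extension of $\Aut(T,\bm r)$ by $\PCBr(T)$.

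I expect the main obstacle to be the inductive step, and specifically making rigorous the passage from the $\Aut(T,\bm r)$-quotient to the split fibration: one must verify that the decomposition $\bm B \simeq \Conf_{\abs{J_p}}\times\prod_i \bm B(t_i)$ is genuinely equivariant for the recursive semidirect structure~\eqref{eq:automorphism_group_true_recursion}, that the two successive quotients are by \emph{free} actions (which rests on the Galois property of Thm.~\ref{thm:local_wmcg_extension} at the base and the inductive step), and---most delicately---that the monodromy of the associated bundle reproduces exactly the conjugation action built into Def.~\ref{def:cabled_braid_group} via the symmetric and braid group operads. The remaining points (asphericity of $X_\varphi$ and existence of the section) should be routine.
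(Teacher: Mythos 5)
Your proposal is correct and follows essentially the same route as the paper: induction on the height $p$, base case via Prop.~\ref{prop:semipure_braid_group}, the product decomposition of $\bm B$ isolating the root level, a two-stage quotient by $\prod_i \Aut(t_i,\bm r_0)$ and then $\on S_\varphi$ exhibiting $\ol{\bm B}$ as a fibre bundle over $X_\varphi$ with fibre $\prod_i \ol{\bm B(t_i)}$, and the split short exact sequence from asphericity plus a section. The only cosmetic difference is that the paper first treats the case where all maximal proper subtrees are isomorphic (base $\UConf_n$) before passing to general $X_\varphi$, and realises your ``$\on S_\varphi$-fixed basepoint'' section as an equivariant constant map into the fibre.
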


\begin{proof}
	By Prop.~\ref{prop:split_sequence_weyl_fission_type_A} the projection $\bm B_Q \to \bm B_{\ol Q}$ amounts to the Galois covering over the quotient $\bm B_Q \bs \Aut(T,\bm r)$, so the claimed group extension will follow from the first statement---as $\ms{P\!B}(T) \simeq \pi_1(\bm B_Q)$, cf.~\cite{doucot_rembado_tamiozzo_2022_local_wild_mapping_class_groups_and_cabled_braids}.

	The first statement instead can be proven by induction on $p \geq 1$.
	If $p = 1$ then $\ms{P\!B}(T) = \PBr_{\, \abs{J_1}}$, and $\ms B(T,r) \sse \Br_{\, \abs{J_1}}$ is the `semipure' braid group of partition of the leaves into equal-rank nodes: the result then follows from Prop.~\ref{prop:semipure_braid_group}.

	Now suppose $p \geq 2$, and write $\bm B(T)$ the space determined by the (unranked) tree as in~\eqref{eq:deformation_space_tree}.
	By definition
	\begin{equation*}
		\bm B(T) = \Conf_{\, \abs{J_p}} \times \prod_{t \in \wt{\mc T}} \bm B(t,\bm r_0)^{n(t)} \, ,
	\end{equation*}
	with the usual notation for representatives of maximal proper subtrees, and for the cardinality of their isomorphism classes.
	Then for $t \in \wt{\mc T}$ the base of the wreath product $\on S_{n(t)} \wr \, \Aut(t,\bm r_0)$ acts on the rightmost factor, while $\on S_{n(t)}$ is naturally a subgroup of permutations of the child-nodes of the roots---permuting the (isomorphic) subtrees rooted there.

	Assume first $\wt{\mc T} = \set{t}$ is a singleton, i.e. all maximal proper subtrees are isomorphic, and let $n \ceqq \abs{J_p}$.
	Then simply $\bm B(T) = \Conf_n \times \bm B(t)^n$, and accordingly $\Aut(T,\bm r) = \on S_n \wr \, \Aut(t,\bm r_0)$ by~\eqref{eq:recursive_stabiliser_type_A}.

	Now we have a natural surjective map $\bm B(T) \to \UConf_n$, composing the canonical projection $\bm B(T) \to \Conf_n$ with the Galois covering $\Conf_n \to \UConf_n$; and there is also a Galois covering $\bm B(T) \to \bm B(T,\bm r) \ceqq \bm B(T) \bs \Aut(T,\bm r)$.
	By construction the former factorises through the latter, so there is a commutative triangle of topological spaces:
	\begin{equation*}
		\begin{tikzcd}[column sep=small]
			& \bm B(T) \ar[two heads,swap]{dl}{p} \ar[two heads]{dr}{\pi} & \\
			\bm B(T,\bm r) \ar[two heads,swap]{rr}{\ol \pi} & & \UConf_n
		\end{tikzcd} \, .
	\end{equation*}
	The difference from the case $p = 1$ is that the arrows onto the unordered configuration space are \emph{not} coverings, but rather (locally trivial) fibre bundles with positive-dimensional fibres.
	For $\pi$ this is clear (it is the composition of a trivial bundle and a locally trivial one), while for $\ol{\pi}$ it can be proven as follows.
	If $O \sse \UConf_n$ is an open trivialising set for $\pi$, then $\pi^{-1}(O) = \wt{O} \times \bm B(t)^n$, where $\wt O \simeq \on S_n \times U$ is the preimage of $O$ under the standard Galois covering, and
	\begin{equation*}
		\ol{\pi}^{-1}(O) = p \bigl( \pi^{-1}(O) \bigr) = \bigl( \wt O \times \bm B(t)^n \bigr) \bs \Aut(T,\bm r) \, .
	\end{equation*}
	Now the latter quotient can be taken in two steps: first the action of the base yields
	\begin{equation*}
		\bigl( \wt O \times \bm B(t)^n \bigr) \bs \bigl( \on 1 \wr \, \Aut(t,\bm r_0) \bigr) \simeq \wt O \times \bm B(t,\bm r_0)^n \, ,
	\end{equation*}
	and then the space $O \times \bm B(t,\bm r_0)^n \sse O \times \on S_n \times \bm B(t,\bm r_0)^n \simeq \wt O \times \bm B(t,\bm r_0)^n$ is a slice for the action of the `complement' subgroup $\on S_n \wr \, 1$.
	In conclusion $\ol{\pi}^{-1}(O) \simeq O \times \bm B(t,\bm r_0)^n$, proving we have a locally trivial fibre bundle
	\begin{equation*}
		\bm B(t,\bm r_0)^n \lhra \bm B(T,\bm r) \lxra{\ol{\pi}} \UConf_n \, .
	\end{equation*}

	This yields the exact group sequence
	\begin{equation*}
		1 \lra \ms B(t,\bm r_0)^n \lra \pi_1\bigl( \bm B(T,\bm r) \bigr) \lxra{\pi_1(\ol{\pi})} \Br_n \lra 1 \, ,
	\end{equation*}
	by the recursive hypothesis, using that $\UConf_n$ is a $\pi(K,1)$-space and that fibres are connected.
	Moreover any continuous function $\UConf_n \to \bm B(t,\bm r_0)^n$ yields a global section, so in conclusion there is a semidirect product decomposition
	\begin{equation*}
		\pi_1 \bigl( \bm B(T,\bm r) \bigr) \simeq \Br_n \wr \, \ms B(t,\bm r_0) \ceqq \Br_n \lts \ms B(t,\bm r_0)^n \, ,
	\end{equation*}
	in accordance with the recursive definition~\eqref{eq:cabled_braid_group}.

	Finally consider the general case where $\wt{\mc T}$ is \emph{not} a singleton.
	Then we can generalise the above argument using the composition of projections
	\begin{equation*}
		\bm B(T) \lra \bm B(T,\bm r) \lra  X_{\varphi} = \Conf_n \bs \on S_{\varphi} \, ,
	\end{equation*}
	onto the semiordered configuration space, where $\varphi \cl J_p \to \wt{\mc T}$ is as above.
	Again a 2-step quotient (over any open trivialising subspace $O \sse X_{\varphi}$) can be taken with respect to the actions of the subgroups
	\begin{equation*}
		\prod_{\wt{\mc T}} \bigl( 1 \wr \, \Aut(t,\bm r_0)^{n(t)} \bigr) \, , \quad \prod_{\wt{\mc T}} \bigl( \on S_{n(t)} \wr \, 1 \bigr) \sse \Aut(T,\bm r) \, ,
	\end{equation*}
	whose (inner) product gives the whole of $\Aut(T,\bm r)$ in view of Lem.~\ref{lem:direct_semidirect_product}.
	The analogous commutative triangle of topological spaces then yields the fibre bundle
	\begin{equation*}
		\prod_{\wt{\mc T}} \bm B(t,\bm r_0)^{n(t)} \lhra \bm B(T,\bm r) \lxra{\ol{\pi}} X_{\varphi} \, .
	\end{equation*}
	In conclusion there is an exact group sequence
	\begin{equation*}
		1 \lra \prod_{\wt{\mc T}} \ms B(t,\bm r_0)^{n(t)} \lra \pi_1\bigl(\bm B(T,\bm r)\bigr) \lxra{\pi_1(\ol{\pi})} \Br_{\varphi} \lra 1 \, ,
	\end{equation*}
	using: (i) the recursive hypothesis; (ii) Prop.~\ref{prop:semipure_braid_group}; (iii) the fact that fibres are connected; and (iv) the fact that $X_{\varphi}$ is a covering of a $K(\pi,1)$-space---so it is also a $K(\pi,1)$.
	Again this has global sections, since there are continuous maps $X_{\varphi} \to \prod_{\wt{\mc T}} \bm B(t,\bm r_0)^{n(t)}$, proving the statement.
\end{proof}

\begin{remark}
	The auxiliary fibre bundle
	\begin{equation*}
		\on S_n \times \bm B(t)^n \lhra \bm B(T) \lxra{\pi} \UConf_n \, ,
	\end{equation*}
	which appears in the above proof when $\wt{\mc T} = \set{t}$, yields the following exact group sequence:
	\begin{equation*}
		1 \lra \ms{P\!B}(t)^n \lra \ms{P\!B}(T) \lxra{\pi_1(\pi)} \Br_n \lra \on S_n \lra 1 \, .
	\end{equation*}
	This is recovering the fact that $\ms{P\!B}(T) \bs \ms{P\!B}(t)^n \simeq \PBr_n$, considering the direct product over the nodes of the (unranked) fission tree.
\end{remark}

\section*{Outlook}

There is a `twisted' version of irregular types/classes~\cite{boalch_yamakawa_2015_twisted_wild_character_varieties,boalch_yamakawa_2020_diagrams_for_nonabelian_hodge_spaces_on_the_affine_line,doucot_2021_diagrams_and_irregular_connections_on_the_riemann_sphere}; and there exist `global' deformations of (twisted, bare) wild Riemann surfaces, defined as in~\cite{doucot_rembado_tamiozzo_2022_local_wild_mapping_class_groups_and_cabled_braids}: we plan to consider these elsewhere.\fn{
	The twisted local case, both pure and full/nonpure, in type $A$, has now been studied in~\cite{boalch_doucot_rembado_2022_twisted_local_wild_mapping_class_groups_configuration_spaces_fission_trees_and_complex_braids};
	and the general case is in~\cite{doucout_rembado_yamakawa_twisted_g_local_wild_mapping_class_groups}.
	The global case instead has been addressed both in~\cite{boalch_doucot_rembado_2022_twisted_local_wild_mapping_class_groups_configuration_spaces_fission_trees_and_complex_braids} and~\cite{doucot_rembado_tamiozzo_2024_moduli_spaces_of_untwisted_wild_riemann_surfaces}.
	}

A further question is to obtain explicit expressions for the actions of the wild mapping class groups on wild character varieties, laying the ground to investigate the resulting Poisson/symplectic dynamics.
E.g. it is known that the algebraic solutions of Painlevé VI correspond to monodromy representations with finite orbits under the braid group action~\cite{dubrovin_mazzocco_2000_monodromy_of_certain_painleve_vi_transcendents_and_reflection_groups,boalch_2005_from_klein_to_painleve_via_fourier_laplace_and_jimbo}, so it is natural to ask whether a similar phenomenon holds in the wild case.

Finally recall~\cite{doucot_rembado_tamiozzo_2022_local_wild_mapping_class_groups_and_cabled_braids} constructed a fine moduli scheme $\ms{I\!\!T}^{\leq p}_{\bm d}$ of irregular types of bounded pole order $p \in \mb Z_{\geq 1}$, and with given pole order $d_{\alpha} \in \set{0,\dc,p}$ after evaluation at each root $\alpha \in \Phi_{\mf g}$---for any complex reductive group $G$.
In particular the $\mb C$-points $\ms{I\!\!T}^{\leq p}_{\bm d}(\Spec \mb C)$ recover the above `universal' deformation space $\bm B_Q$, for any irregular type $Q \in \mf t \ots \ms T^{\leq p}_{\Sigma,a}$ with $\ord(q_{\alpha}) = d_{\alpha}$.
We plan to construct fine moduli spaces of irregular classes, also in the twisted setting, presumably via GIT-theoretic quotients of the moduli spaces of irregular types.

\section*{Acknowledgements}

We thank R. Ontani for a helpful discussion about fibre bundles, and M. Tamiozzo for sharing his precious insight about the algebraic structure of the deformation spaces; and we thank P. Boalch for his past guidance.

\appendix

\section{Basic notions/notations}
\label{sec:app_notions}

\subsection*{Permutations and partitions}

For an integer $n \geq 0$ we write $\ul n \ceqq \set{1,\dotsc,n}$ (so $\ul 0 = \varnothing$), and let $\on S_n$ be the symmetric group of permutations of $\ul n$ (so $\on S_0$ and $\on S_1$ are trivial).
The action of a permutation $\sigma \in \on S_n$ is denoted by $j \mapsto \sigma_j \in \ul n$, and we compose them right-to-left.
More generally if $I$ is a finite set we denote by $\on S_I$ its symmetric group of permutations, so $\on S_I \simeq \on S_{\abs I}$ by choosing a total order on $I$.

If $I$ and $J$ are sets, a $J$-\emph{partition} of $I$ is a surjection $\phi \cl I \thra J$, which is equivalent to giving a disjoint union
\begin{equation*}
	I = \coprod_{j \in J} I_j \, , \qquad I_j \ceqq \phi^{-1}(j) \sse I \, ,
\end{equation*}
with nonempty parts indexed by $J$.

If $I$ is finite, and all parts have the same cardinality $m = \abs{I_j} \geq 1$, then we can consider the subgroup permuting all parts, and further the elements within each part: this is the (restricted) \emph{wreath} product
\begin{equation*}
	\on S_J \wr \, \on S_m = \on S_J \wr_{J} \, \on S_m \, ,
\end{equation*}
using the natural $\on S_J$-action on $J$.
In turn the wreath product is the same as
\begin{equation*}
	\on S_J \wr \, \on S_m = \on S_J \lts (\on S_m)^{\abs J} \, ,
\end{equation*}
with respect to the action of $\on S_J$ given by
\begin{equation*}
	\sigma \cdot \bm \tau = \prod_{j \in J} \tau^{(\sigma^{-1}_j)} \, , \qquad \sigma \in \on S_j \, , \quad \bm \tau = \prod_{j \in J} \tau^{(j)} \in (\on S_m)^{\abs{J}} \, ,
\end{equation*}
where $\tau^{(j)} \in \on S_{I_j} \simeq \on S_m$ for $j \in J$.
Elements of $\on S_J \wr \, \on S_m$ are then written $(\sigma;\bm \tau)$.

\subsection*{Weyl actions}

Let $(V,\Phi)$ be a root system in the finite-dimensional complex vector space $V$, and $W = W(\Phi) \sse \GL_{\mb C}(V)$ the Weyl group.
If $S \sse V$ is a subset, its \emph{setwise} Weyl-stabiliser is the subgroup
\begin{equation*}
	\Stab_W(S) = \Set{ w \in W | w(S) \sse S } \sse W \, ,
\end{equation*}
and its \emph{pointwise} Weyl-stabiliser is the subgroup
\begin{equation*}
	W_S = \Set{ w \in W | S \sse \Ker(w - \Id_V) } \sse \Stab_W(S) \, .
\end{equation*}
The latter is a \emph{parabolic} subgroup of $W$---thinking of the Weyl group as a reflection group---and is a normal subgroup of the former.
Clearly $W_S = W_{\mb C S}$, and if $\mb C S_1 \sse \mb C S_2$ then $W_{S_1} \sse W_{S_2}$; further if $U_1, U_2 \sse V$ are subspaces then
\begin{equation*}
	W_{U_1 + U_2} = W_{U_1} \cap W_{U_2} \sse W \, .
\end{equation*}
On the other hand, tautologically, if $W_i = \Stab_W (U_i)$ then
\begin{equation*}
	\Stab_{W_1}(U_2) = W_1 \cap \Stab_W(U_2) = W_2 \cap \Stab_W(U_1) = \Stab_{W_2}(U_1) \, .
\end{equation*}

We identify $W$ with the Weyl group $W(\Phi^{\dual}) \sse \GL_{\mb C}(V^{\dual})$ for the dual/inverse root system, via $w \mapsto \prescript{t}{}w^{-1}$~\cite[Ch.~VI, \S~1.1]{bourbaki_1968_elements_de_mathematiques_fascicule_xxxvii_chapitres_iv_v_vi}.

\subsection*{Braid groups}

For an integer $n \geq 0$ we denote by $\PBr_n$ the \emph{pure braid group} on $n$ strands---so $\PBr_0$ and $\PBr_1$ are trivial.
It is the fundamental group of the space
\begin{equation}
	\label{eq:type_A_complement}
	\Conf_n = \Conf_n(\mb C) \ceqq \mb C^n \sm \bigcup_{1 \leq i \neq j \leq n} H_{ij} \, ,
\end{equation}
where
\begin{equation*}
	H_{ij} = \Set{ (z_1,\dc,z_n) \in \mb C^n \mid z_i = z_j } \sse \mb C^n \, .
\end{equation*}
In particular $\Conf_1 = \mb C$, and in general this yields the space of \emph{ordered} configurations of $n$ points in the complex plane.
The symmetric group $\on S_n$ acts naturally on~\eqref{eq:type_A_complement}, and the projection
\begin{equation}
	\label{eq:unordered_configuration_space}
	\Conf_n \lra \UConf_n \ceqq \Conf_n \bs \on{S}_n \, ,
\end{equation}
to the space of \emph{unordered} configurations, is a Galois covering.
The (full/nonpure) \emph{braid group} is $\Br_n = \pi_1(\UConf_n)$, and the associated exact group sequence
\begin{equation}
	\label{eq:exact_sequence_braid_group}
	1 \lra \PBr_n \lra \Br_n \lxra{p_n} S_n \lra 1
\end{equation}
corresponds to the braid group `augmentation', i.e. the morphism taking the permutation underlying the braiding of the $n$ strands~\cite{artin_1947_theory_of_braids}.

More generally, for a (reductive) split Lie algebra $(\mf g,\mf t)$ we consider the root-hyperplane complement
\begin{equation*}
	\mf t_{\reg} = \mf t \setminus \bigcup_{\alpha \in \Phi_\mf g} \Ker(\alpha) \sse \mf t \, , \qquad \Phi_\mf g = \Phi(\mf g,\mf t) \, ,
\end{equation*}
generalising~\eqref{eq:type_A_complement} in type $A$, and $\PBr_\mf g = \pi_1(\mf t_{\reg})$ is the \emph{pure} $\mf g$-\emph{braid group}, a.k.a. the generalised (Artin--Tits) braid group of type $\mf g$~\cite{brieskorn_1971_die_fundamentalgruppe_des_raumes_der_regulaeren_orbits_einer_endlichen_komplexen_spiegelungsgruppe,brieskorn_saito_1972_artin_gruppen_und_coxeter_gruppen,deligne_1972_les_immeubles_des_groupes_de_tresses_generalises,brieskorn_1973_sur_les_groupes_de_tresses_d_apres_v_i_arnold}.
The Weyl group $W_{\mf g} = W(\Phi_\mf g)$ acts freely on $\mf t_{\reg}$, and $\mf t_{\reg} \to \ol{\mf t}_{\reg} \ceqq \mf t_{\reg} \bs W_{\mf g}$ is a Galois covering.
Then $\Br_\mf g \ceqq \pi_1 (\ol{\mf t}_{\reg})$ is the \emph{full/nonpure} $\mf g$-braid group, and there is an exact group sequence generalising~\eqref{eq:exact_sequence_braid_group}:
\begin{equation*}
	1 \lra \PBr_\mf g \lra \Br_\mf g \lra W_\mf g \lra 1 \, .
\end{equation*}

\subsection*{Trees}

A (finite) \emph{tree} $T = (T_0,\bm \phi)$ of height $p \geq 1$ is the data of a finite set $T_0$ with a partition $T_0 = \coprod_{l = 1}^{p+1} J_l$, such $J_{p+1} = \set{\ast}$ is a singleton, and a function $\bm \phi \cl T_0 \sm \set{\ast} \to T_0$ such that $\bm \phi(J_l) \sse J_{l+1}$ for $l \in \set{1,\dc,p}$.
The elements of $T_0$ are the \emph{nodes} of the tree, and $\bm \phi(i)$ is the \emph{parent-node} of $i \in T_0 \sm \set{\ast}$---so $\ast \in J_{p+1}$ is the \emph{root}, while $J_1 \sse T_0$ contains the \emph{leaves}.
Conversely $\bm \phi^{-1}(i) \sse T_0$ is the set of \emph{child-nodes} of $i \in T_0$.

\section{Missing proofs}
\label{sec:app_lemmata}

\begin{proof}[Proof of Lem.~\ref{lem:recursive_stabiliser_equals_flag_stabiliser}]
	\label{proof:lem_recursive_stabiliser_equals_flag_stabiliser}

	We can recursively prove that
	\begin{equation*}
		W_i = \bigcap_{i \leq j \leq p} \Stab_{W_\mf g}(U_j) \, , \qquad i \in \set{1,\dc,p} \, .
	\end{equation*}

	The base $i = p$ is tautological, and then
	\begin{equation*}
		W_{i-1} = \Set{ w \in W_i | w(U_{i-1}) \sse U_{i-1} } = \Set{ w \in W_{\mf g} | w(U_j) \sse U_j \text{ for } j \geq i-1 } \, ,
	\end{equation*}
	using~\eqref{eq:recursive_stabiliser} and the recursive hypothesis.
\end{proof}

\begin{proof}[Proof of Lem.~\ref{lem:wreath_quotients}]
	\label{proof:lem_wreath_quotients}

	By definition $\on S_m \wr \, P = \on S_m \lts P^m$, with respect to the natural permutation action $\on S_m \to \Aut(P^m)$.
	Then $1 \wr N = 1 \lts N^m \sse \on S_m \lts P^m$, and it is a normal subgroup since it is normalised by $1 \wr P =1 \lts  P^m$ and stabilised by the permutation action.
	Hence the quotient on the left-hand of~\eqref{eq:wreath_quotients} is well defined.

	Now there is an induced action $\on S_m \to \Aut(Q^m)$, where $Q \ceqq P \bs N$, and finally the natural surjective group morphism $\on S_m \wr \, P \to \on S_m \wr \, Q$ vanishes on $1 \wr N$.
\end{proof}

\begin{proof}[Proof of Lem.~\ref{lem:semidirect_product_symmetric}]
	\label{proof:lem_semidirect_product_symmetric}

	The compatibility with the product follows from~\eqref{eq:permutation_conjugation_action_symmetric_group} (which in turn is equivalent to the action-operad axiom for $\ms S$~\cite[Eq.~4.1.2]{yau_2019_infinity_operads_and_monoidal_categories_with_group_equivariance}), and from the fact that the block permutation operation is a group morphism in this case.

	Injectivity follows from the identity
	\begin{equation*}
		\on S_n \braket{ \bm k } \cap (\on S_k)^n = 1 \sse \on S_{nk} \, ,
	\end{equation*}
	where $\on S_n \braket{ \bm k } \sse \on S_{nk}$ is the image of the block permutation operation $\on S_n \to \on S_{nk}$.
\end{proof}

\begin{proof}[Proof of Lem.~\ref{lem:direct_semidirect_product}]
	\label{proof:lem_direct_semidirect_product}

	There is a natural bijection
	\begin{equation*}
		\prod_I (p_i,n_i) \lmt \Bigl( \prod_I p_i,\prod_I n_i \Bigr) \, ,
	\end{equation*}
	between the underlying sets, and one can show it is compatible with the semidirect multiplication.

	Indeed choose elements $p'_i,p_i \in P_i$ and $n'_i,n_i \in N_i$ for $i \in I$, so that
	\begin{align*}
		\prod_I (p'_i,n'_i) \prod_I (p_i,n_i) & = \prod_I (p'_i,n'_i) \bullet_i (p_i,n_i)                            \\
		                                      & = \prod_I (p'_ip_i,\rho_i(p_i)n'_in_i) \in \prod_I P_i \lts N_i \, ,
	\end{align*}
	which is mapped to $\Bigl( \prod_I p'_ip_i, \prod_I \rho_i(p_i)n'_in_i \Bigr) \in P \lts N$.
	Conversely
	\begin{equation*}
		\Bigl(\prod_I p'_i,\prod_I n'_i \Bigr) \bullet \Bigl( \prod_I p_i,n_i \Bigr) = \Bigl( \prod_I p'_i\prod_I p_i, \rho \bigl( \prod_I p_i \bigr)\prod_I n'_i \prod_I n_i \Bigr) \in P \lts N \, ,
	\end{equation*}
	which coincides with the above---using the product action and the direct product multiplication.
\end{proof}

\section{Relations to isomonodromy systems}
\label{sec:app_isomonodromy_systems}

On the other side of the Riemann--Hilbert--Birkhoff correspondence there is a Poisson fibre bundle analogous to~\eqref{eq:betti_bundle}, viz.
\begin{equation}
	\label{eq:de_rham_bundle}
	\ul{\mc M}_{\dR} \lxra{\pi} \bm B \, ,
\end{equation}
whose fibres (the de Rham spaces) are moduli spaces of irregular singular connections on principal $G$-bundles.
This is equipped with the pullback (flat, nonlinear) isomonodromy connection; see in particular~\cite[Fig.~1]{boalch_2001_symplectic_manifolds_and_isomonodromic_deformations}, which spells out the picture of the wild nonabelian Gauß--Manin connection on the Betti side (extending the tame case~\cite{simpson_1994_moduli_of_representations_of_the_fundamental_group_of_a_smooth_projective_variety_i,simpson_1994_moduli_of_representations_of_the_fundamental_group_of_a_smooth_projective_variety_ii}).

Now one can choose a local trivialisation of~\eqref{eq:de_rham_bundle}, i.e. an isomorphism of fibre bundles
\begin{equation}
	\label{eq:local_de_rham_trivialisation}
	\begin{tikzcd}[column sep=small]
		\eval[1]{\ul{\mc M}_{\dR}}_O \ceqq \pi^{-1}(O) \ar{rr}{\simeq} \ar[swap]{dr}{\pi} & & O \times M \ar{dl}{p_1} \\
		& O &
	\end{tikzcd} \, ,
\end{equation}
over an open subspace $O \sse \bm B$, for a \emph{fixed} Poisson manifold $\bigl( M,\set{ \cdot,\cdot } \bigr)$.
Then the isomonodromy connection (on the upper-left corner of~\eqref{eq:local_de_rham_trivialisation}) can be given by explicit nonlinear first-order partial differential equations in local coordinates $\bm t = (t_1,\dc,t_d)$ on $O$, where $d = \dim (\bm B)$, for local sections over the trivialising locus.

Moreover the difference between the isomonodromy connection  and the trivial connection (on the upper-right corner of~\eqref{eq:local_de_rham_trivialisation}) can be `integrated'\fn{
	The difference of the corresponding horizontal distributions---inside $TO \times TM \xrightarrow{Tp_1} TO$---is given by $O$-dependent (vertical) vector fields $X_i \cl O \times M \to TM$ on the fibre, and one has $\dif H_i = \Braket{\omega,X_i}$ (cf.~\cite[\S~5]{boalch_2012_simply_laced_isomonodromy_systems}).}
to a nonautonomous Hamiltonian system
\begin{equation}
	\label{eq:isomonodromy_system}
	\bm H = (H_1,\dc,H_d) \cl M \times O \lra \mb C^d \, .
\end{equation}
In this Hamiltonian viewpoint, the symplectic nature of isomonodromic deformations is equivalent to the integrability of~\eqref{eq:isomonodromy_system}.
The latter amounts to the identities
\begin{equation*}
	\Set{ H_i,H_j } + \pd{H_i}{t_j} - \pd{H_j}{t_i} = 0 \, , \qquad i,j \in \set{1,\dc,d} \, .
\end{equation*}

Hence the local coordinates become times of isomonodromic deformations over $O \sse \bm B$: but in principle they are \emph{not} intrinsically associated with isomonodromic deformations, contrary to the flat Ehresmann connections on~\eqref{eq:betti_bundle} and~\eqref{eq:de_rham_bundle} (one needs a choice of `initial' trivialisation~\cite[Rk.~7.1]{boalch_2001_symplectic_manifolds_and_isomonodromic_deformations}; cf.~\cite{yamakawa_2019_fundamental_two_forms_for_isomonodromic_deformations}).

\vspace{5pt}

Examples of such isomonodromy systems abound, with far-reaching applications already in the genus-zero case, famously encompassing (generalisations of) the Painlevé equations~\cite{okamoto_1986_studies_on_painleve_equations_i_sixth_painleve_equations_p_vi,okamoto_1986_studies_on_painleve_equations_ii_fifth_painleve_equation_p_v,okamoto_1986_studies_on_painleve_equations_iii_second_and_fourth_painleve_equations_p_ii_and_p_iv,bertola_cafasso_rubtsov_2018_noncommutative_painleve_equations_and_systems_of_calogero_type,cafasso_gavrylenko_lisovyy_2019_tau_functions_as_widom_constants} and the Schlesinger system~\cite{schlesinger_1905_ueber_die_loesungen_gewisser_linearer_differentialgleichungen_als_funktionen_der_singularen_punkte} (cf. also~\cite{miwa_1981_painleve_property_of_monodromy_preserving_deformation_equations_and_the_analiticity_of_tau_functions}).
The (Harnad-)dual version of the Schlesinger system, on the other side of the Fourier--Laplace transform~\cite{harnad_1994_dual_isomonodromic_deformations_and_moment_maps_to_loop_algebras,yamakawa_2016_fourier_laplace_transform_and_isomonodromic_deformations}, was considered in~\cite{boalch_2002_g_bundles_isomonodromy_and_quantum_weyl_groups}, and the combination of Schlesinger and its dual yield the system of Jimbo--Miwa--Môri--Sato (JMMS)~\cite{jimbo_miwa_mori_sato_1980_density_matrix_of_an_impenetrable_bose_gas_and_the_fifth_painleve_transcendent}.
Recall the Painlevé property of the JMMS equations was studied in~\cite{miwa_1981_painleve_property_of_monodromy_preserving_deformation_equations_and_the_analiticity_of_tau_functions}.

Note also that~\cite{harnad_1994_dual_isomonodromic_deformations_and_moment_maps_to_loop_algebras} links previous papers~\cite{adams_harnad_previato_1988_isospectral_hamiltonian_flows_in_finite_and_infinite_dimensions_i_generalized_mosers_systems_and_moment_maps_into_loop_algebras,adams_harnad_hurtubise_1990_isospectral_hamiltonian_flows_in_finite_and_infinite_dimensions_ii_integration_of_flows} about \emph{isospectral} deformations to the \emph{isomonodromic} deformations of JMMS.
However this hardly the whole story: a rigorous treatment of the degeneration of the isomonodromic deformations of JMMS into a combination of the isospectral deformations and the Whitham dynamics is still open, cf.~\cite{takasaki_1998_dual_isomonodromic_problems_and_whitham_equations} for a related conjecture, and~\cite{xu_2020_representations_of_quantum_groups_arising_from_the_stokes_phenomenon_and_applications} in the quantum case---with applications to quantum groups and canonical bases.
In particular the `cactus' groups naturally appear: they are fundamental groups of certain compactifications of the real points in the base space, controlling the asymptotics zones of the nonlinear isomonodromy equations,
and are analogues of the braid groups encountered here.

Finally a generalisation of all the above was derived in~\cite{boalch_2012_simply_laced_isomonodromy_systems}.
This latter setup brings about nongeneric isomonodromic deformations, considering connections with several levels, which extend examples of the seminal paper~\cite{jimbo_miwa_ueno_1981_monodromy_preserving_deformation_of_linear_ordinary_differential_equations_with_rational_coefficients_i_general_theory_and_tau_function}.
(Recall the set of levels is the set of nonzero pole orders of the irregular types, evaluated at each root.)
Importantly this is more symmetric than op.~cit., which in turn is one of our main motivations for studying the `deeper' nongeneric case: in particular in~\cite{boalch_2012_simply_laced_isomonodromy_systems} the group $\SL_2(\mb C)$ acts on the bundle of de Rham spaces via automorphisms of the 1-dimensional Weyl algebra, and contains the Fourier--Laplace transform as the element
$\begin{pmatrix}
		0 & -1 \\
		1 & 0
	\end{pmatrix}$.

\vspace{5pt}

Importantly all these Hamiltonian systems have numerous applications in mathematical physics, notably in integrable hierarchies of differential equations such as KdV~\cite{gerard_1979_la_geometrie_des_transcendantes_de_p_painleve}, and in 2d conformal field theory after quantisation (e.g.~\cite{nagoya_sun_2010_confluent_primary_fields_in_the_conformal_theory,yamada_2011_a_quantum_isomonodromy_equation_and_its_application_to_n_equal_2_su_n_gauge_theories,alday_gaiotto_tachikawa_2010_liouville_correlation_functions_from_four_dimensional_gauge_theories,gaiotto_2013_asymptotically_free_n_equal_2_theories_and_irregular_conformal_blocks,felder_rembado_2023_singular_modules_for_affine_lie_algebras_and_applications_to_irregular_wznw_conformal_blocks}, opening to `irregular' conformal blocks and the AGT correspondence).
More precisely the quantisation of the Schlesinger system leads to the Knizhnik--Zamolodchikov connection (KZ)~\cite{reshetikhin_1992_the_knizhnik_zamolodchikov_system_as_a_deformation_of_the_isomonodromy_problem,harnad_1996_quantum_isomonodromic_deformations_and_the_knizhnik_zamolodchikov_equations}, while the quantisation of the dual Schlesinger system (as in~\cite{boalch_2002_g_bundles_isomonodromy_and_quantum_weyl_groups}) leads to the Casimir connection of de Concini/Millson--Toledano Laredo~\cite{millson_toledanolaredo_2005_casimir_operators_and_monodromy_representations_of_generalised_braid_groups}.
The quantisation of the JMMS systems, generalising the above and recovering the connection of Felder--Markov--Tarasov--Varchenko~\cite{felder_markov_tarasov_varchenko_2000_differential_equations_compatible_with_kz_equations}, was considered in~\cite{rembado_2019_simply_laced_quantum_connections_generalising_kz}; further op.~cit. constructed a quantisation of the more general `simply-laced' systems of~\cite{boalch_2012_simply_laced_isomonodromy_systems} (this is resumed in the table in the introduction of~\cite{rembado_2019_simply_laced_quantum_connections_generalising_kz}); cf.~\cite{yamakawa_2022_quantization_of_simply_laced_isomonodromy_systems_by_the_quantum_spectral_curve_method} for a different construction of `quantum' simply-laced isomonodromy systems, and~\cite{nagoya_sun_2010_confluent_primary_fields_in_the_conformal_theory,nagoya_sun_2011_confluent_kz_equations_for_sl_n_with_poincare_rank_2_at_infinity,gaiur_mazzocco_rubtsov_2023_isomonodromic_deformations_confluence_reduction_and_quantisation} for a `confluence' viewpoint on the quantisation of irregular singularities.

\vspace{5pt}

Finally encoding the irregular moduli in the base curve, and constructing bundles over their (admissible) deformations, is also helpful for the quantisation of the extended `classical' symmetries of isomonodromy systems.
In particular the quantised $\SL_2(\mb C)$-symmetries~\cite{rembado_2020_symmetries_of_the_simply_laced_quantum_connections_and_quantisation_of_quiver_varieties} generalise the Howe duality~\cite{baumann_1999_the_q_weyl_group_of_a_q_schur_algebra}, which in turn was used in~\cite{toledanolaredo_2002_a_kohno_drinfeld_theorem_for_quantum_weyl_groups} to compute the monodromy of the Casimir connection in terms of that of KZ (cf. also~\cite{tarasov_varchenko_2002_duality_for_knizhnik_zamolodchikov_and_dynamical_equations}): this latter example of generic `quantum' monodromy action brings about the $G$-braid groups which we generalise in this series of papers.

\section{\for{toc}{List of notation}\except{toc}{List of some nonstandard notation (in rough order of appearance)}}
\label{sec:app_notation_list}

\begin{longtable}{ p{.16\textwidth}  p{.84\textwidth} }
	$\Sigma$                       & Riemann surface                                                                             \\
	$G$                            & connected complex reductive Lie group                                                       \\
	$\ul{\mc M}_{\on B}$           & Poisson/symplectic fibration of Betti spaces                                                \\
	$\mf g$                        & Lie algebra of $G$                                                                          \\
	$\mf t$                        & Cartan subalgebra of $\mf g$                                                                \\
	$T$                            & maximal (algebraic) torus in $G$                                                            \\
	$Q$                            & irregular type                                                                              \\
	$a$                            & point of $\Sigma$                                                                           \\
	$A_i$                          & coefficients of $Q$                                                                         \\
	$W_{\mf g}$                    & Weyl group of $(\mf g,\mf t)$                                                               \\
	$\ol Q$                        & irregular class underlying $Q$                                                              \\
	$\bm \Sigma$                   & wild Riemann surface                                                                        \\
	$\bm B_Q$                      & space of admissible deformations of $Q$                                                     \\
	$\bm B_{A_i}$                  & space of admissible deformations of $A_i$                                                   \\
	$\bm B_{\ol Q}$                & space of admissible deformations of $\ol Q$                                                 \\
	$W_{\mf g \mid \bm{\mf h}}$    & subquotient of $W_{\mf g}$ acting freely on $\bm B_Q$                                       \\
	$(T,\bm r)$                    & ranked fission tree                                                                         \\
	$\Aut(T,\bm r)$                & automorphisms of $(T,\bm r)$                                                                \\
	$\ms B(T,\bm r)$               & full/nonpure cabled braid group of $(T,\bm r)$                                              \\
	$\Phi_{\mf g}$                 & root system of $(\mf g,\mf t)$                                                              \\[3pt]
	$\wh{\ms O}_{\Sigma,a}$        & completed local ring of $\Sigma$ at $a$                                                     \\[3pt]
	$\wh{\ms K}_{\Sigma,a}$        & fraction field of $\wh{\ms O}_{\Sigma,a}$                                                   \\[3pt]
	$\ms T_{\Sigma,a}$             & quotient of $\wh{\ms K}_{\Sigma,a}$ modulo $\wh{\ms O}_{\Sigma,a}$                          \\
	$d_{\alpha}$                   & pole order of $q_{\alpha} = (\alpha \ots 1)Q$                                               \\
	$\bm d$                        & tuple of the $d_{\alpha}$                                                                   \\
	$\Gamma_Q$                     & pure local WMCG                                                                             \\
	$\Gamma_{\ol Q}$               & full/nonpure local WMCG                                                                     \\
	$\mf h_i$                      & nested stabilisers of $A_1,\dc,A_p$                                                         \\
	$H_i$                          & connected subgroup of $G$ with Lie algebra $\mf h_i$                                        \\
	$\Phi_{\mf h_i}$               & Levi subsystem of $\Phi_{\mf g}$                                                            \\
	$W_{\mf h_i}$                  & Weyl group of $(\mf h_i,\mf t)$                                                             \\
	$U_i$                          & intersection of the root hyperplanes of $\Phi_{\mf h_i}$                                    \\
	$W_i$                          & nested setwise stabilisers of the $U_i$                                                     \\
	$\Stab_{W_i}(\bm B_{A_i})$     & setwise stabiliser of $\bm B_{A_i}$ in $W_i$                                                \\
	$\bm U$                        & flag of the subspaces $U_i$ in $\mf t$                                                      \\
	$(W_{\mf g})_{U_i}$            & pointwise stabiliser of $U_i$ in $W_{\mf g}$                                                \\
	$\PBr_n$                       & pure braid group on $n$ strands                                                             \\
	$\Br_n$                        & full/nonpure braid group on $n$ strands                                                     \\
	$\Dih_n$                       & dihedral group of order $2n$                                                                \\
	$\Delta_{\mf g}$               & base of simple roots for $\Phi_{\mf g}$                                                     \\
	$\Conf_n$                      & configuration space of $n$ ordered points in $\mb C$                                        \\
	$\UConf_n$                     & configuration space of $n$ unordered points in $\mb C$                                      \\
	$\ul n$                        & the set $\set{1,\dc,n}$                                                                     \\
	$\on S_I$                      & group of permutations of a set $I$                                                          \\
	$I_i$                          & parts of $\ul n$ defined by a root subsystem of $A_{n-1}$                                   \\
	$J$                            & index set for the parts $I_i$                                                               \\
	$K_i$                          & collection of parts $I_j$ with $i \geq 0$ elements                                          \\[3pt]
	$\on S_{\varphi}$              & group of permutations preserving a partition $\varphi$                                      \\
	$p_n$                          & augmentation group morphism of $\Br_n$                                                      \\
	$\Br_{\varphi}$                & group of braids with underlying permutations in $\on S_{\varphi}$                           \\
	$T$                            & fission tree                                                                                \\
	$J_i$                          & levels of $T$                                                                               \\
	$T_0$                          & nodes of $T$                                                                                \\
	$\bm \phi$                     & parent-node function of $T$                                                                 \\
	$\bm r$                        & rank function of $T$                                                                        \\
	$\mc T$                        & set of maximal proper subtrees of $T$                                                       \\
	$t$                            & an element of $\mc T$                                                                       \\
	$\Aut(T,\bm r)$                & automorphism group of $(T,\bm r)$                                                           \\
	$t_0$                          & nodes of $t$                                                                                \\
	$\bm r_0$                      & restriction of $\bm r$ to $t_0$                                                             \\
	$\ms S$                        & symmetric braid group operad                                                                \\
	$\gamma^{\ms S}$               & composition of $\ms S$                                                                      \\
	$\ms B$                        & full/nonpure braid group operad                                                             \\
	$\gamma^{\ms B}$               & composition of $\ms B$                                                                      \\
	$\ms{P\!B}(T)$                 & pure cabled braid group of $T$                                                              \\
	$\ms{I\!\!T}^{\leq p}_{\bm d}$ & moduli scheme of irregular types of bounded pole order, and given pole order along any root \\
	$\ul{\mc M}_{\dR}$             & Poisson/symplectic fibration of de Rham spaces                                              \\
\end{longtable}

\bibliographystyle{amsalpha}
\bibliography{/home/gabriele/Desktop/bibliography_macros/bibliography}
\end{document}